\crefname{equation}{Equation}{Equations}
\crefname{conjecture}{Conjecture}{Conjectures}
\crefname{figure}{Figure}{Figures}
\crefname{thm}{Theorem}{Theorems}
\newtheorem{thm}{Theorem}
\numberwithin{thm}{section} 
\newtheorem{lemma}[thm]{Lemma}
\newtheorem{cor}[thm]{Corollary}
\newtheorem{prop}[thm]{Proposition}
\theoremstyle{definition}
\newtheorem{example}[thm]{Example}
\theoremstyle{remark}
\newtheorem{rem}[thm]{Remark}
\newcommand{\Pet}{\ensuremath{\mathbf{P}}}
\newcommand{\Perm}{\ensuremath{\mathbf{Perm}}}
\newcommand{\Fl}{\ensuremath{G/B}}
\newcommand{\Hom}{\operatorname{Hom}}
\newcommand{\X}[1]{\ensuremath{\mathfrak X(#1)}}
\newcommand{\bo}{\ensuremath{\mathfrak b}}
\newcommand{\g}{\ensuremath{\mathfrak g}}
\newcommand{\car}{\ensuremath{\mathfrak h}}
\newcommand{\Hess}{\mathbf H}
\newcommand{\IC}{\mathbb C}
\newcommand{\IQ}{\mathbb Q}
\newcommand{\set}[2]{\left\{#1\,\middle\vert\,#2\right\}}
\DeclareMathOperator{\pr}{pr}
\author{Rebecca Goldin\footnote{rgoldin@gmu.edu}\ \footnote{Supported in part by NSF\#2152312}, Rahul Singh\footnote{rahul.sharpeye@gmail.com}}
\begin{document}
\title{Equivariant Chevalley, Giambelli, and Monk Formulae for the Peterson Variety}
\date{}
\maketitle

\begin{abstract}

We present a formula for the Poincar\'e dual in the flag manifold of the equivariant fundamental class of any regular nilpotent or regular semisimple Hessenberg variety as a polynomial in terms of certain Chern classes.
We then develop a type-independent proof of the Giambelli formula for the Peterson variety,
and use this formula to compute the intersection multiplicity of a Peterson variety with an opposite Schubert variety corresponding to a Coxeter word.
Finally, we develop an equivariant Chevalley formula for the cap product of a divisor class with a fundamental class,
and a dual Monk rule, for the Peterson variety.
\end{abstract}

\let\thefootnote\relax\footnote{AMS Subject Classes 14N10, 05E14}
\section{Introduction}
Let $G$ be a complex semisimple Lie group
corresponding to a Dynkin diagram $\Delta$. 
Let $B$ and $B^-$ be a pair of opposite Borel subgroups in $G$,
and let $T:= B \cap B^-$ be the corresponding maximal torus in $G$.
We identify $\Delta$ with the set of simple roots of $(G,B,T)$.
The quotient $G/B$ is the associated flag manifold, with a left $T$ action.

The $T$-equivariant homology $H_*^T(G/B)$ of $G/B$ has a basis consisting of {\em Schubert varieties,} which are $B$ orbit closures in $G/B$.
Since $G/B$ is smooth, there exists a dual basis of the $T$-equivariant cohomology $H_T^*(G/B)$
given by Schubert classes $\set{\sigma_w}{w\in W}$, where $W$ is the Weyl group.
This basis enjoys  {\em Graham positivity}, i.e., the structure constants $c_{uv}^w\in H_T^*(pt)$ defined by 
\begin{align*}
\sigma_u\sigma_v = \sum_{w\in W} c_{uv}^w \sigma_w
\end{align*}
are polynomials with non-negative coefficients in the set $\alpha\in \Delta$ for all $u,v,w\in W$.

This paper is concerned with a class of subvarieties of $G/B$  called {\em Hessenberg varieties}, with a particular focus  on the {\em Peterson variety}, a regular nilpotent Hessenberg variety.
Hessenberg varieties were first introduced by De Mari \cite{MR2636295} as part of the study of certain matrix decomposition algorithms.
They were generalized outside type A by De Mari, Procesi, and Shayman \cite{MR1043857},
who further identified the permutahedral variety, a toric variety studied by Klyachko \cite{klyachko85,klyachko95}, as a particular Hessenberg variety.
The Peterson variety is a flat degeneration of the permutahedral variety \cite{abe.fujita.zeng, MR2609167}.
Peterson \cite{peterson:notes} and Kostant \cite{kostant:flag} showed that the coordinate ring of a particular open affine subvariety of the Peterson variety is isomorphic to the quantum cohomology ring of the flag variety; see also \cite{rietsch:totally}.

The Peterson variety admits a natural $\IC^*$ action.
In \cite{gms:peterson}, Goldin, Mihalcea, and Singh show that $\IC^*$-equivariant Peterson Schubert calculus also satisfies Graham positivity; see \cref{eq:pstruct} below.
The equivariant cohomology of Peterson varieties in all Lie types is described in \cite{harada.horiguchi.masuda:Peterson}, using generators and relations.
Drellich \cite{drellich:monk} found a Giambelli formula for certain Coxeter elements (see \cref{intro:Giambelli}) using a type-by-type analysis.
Horiguchi \cite{horiguchi2021mixed} obtained a Monk rule for ordinary cohomology using similar methods.
In type A, an equivariant Monk rule was developed by Harada and Tymoczko \cite{harada.tymoczko:Monk}. Goldin and Gorbutt \cite{goldin.gorbutt:PetSchubCalc} subsequently found positive combinatorial formulae for all equivariant structure constants for the Peterson variety in type A. In parallel, Abe, Horiguchi, Kuwata and Zheng provided a different formula for the nonequivariant structure constants \cite{AbHoKuZe}.
 
In this paper, we prove an equivariant Giambelli formula for all Coxeter elements, in any Lie type, using type-independent methods.
We also prove an equivariant Monk rule (\cref{intro:thm:monk}) and a dual equivariant Chevalley formula (\cref{intro:thm:chevalley}), both of which use a pairing of equivariant cohomology and homology for the Peterson variety (see \cref{intro:thm:mult}). 
 
We denote by $\Phi$, $\Phi^+$, and $W$ the set of roots, set of positive roots, and Weyl group, respectively.
Let $\g=Lie(G)$, $\bo=Lie(B)$, $\car=Lie(T)$,
and let $\g_\alpha\subset\g$ denote the root space corresponding to $\alpha\in\Phi$.
Consider the subspace 
\begin{align*}
H_0=\bo\oplus\bigoplus_{\alpha\in\Delta}\g_{-\alpha}.
\end{align*}
A $B$-stable subspace $H$ of \g\ containing $H_0$ is called an \emph{indecomposable Hessenberg space}.
For $x\in\g$, 
we have a corresponding \emph{Hessenberg variety},
\begin{align*}
\Hess(x,H)=\set{gB\in G/B}{Ad(g^{-1})x\in H}.
\end{align*}
The Hessenberg variety $\Hess(x,H)$ admits an action by the projective centralizer of $x$,
given by $
\widetilde C_G(x)=\set{g\in G}{Ad(g)x=\lambda x\text{ for some }\lambda\in\IC}.
$

{For each $\alpha\in\Phi^+$, let $\alpha^\vee$ denote the corresponding coroot. For $\alpha\in \Delta$, fix a non-zero element $e_\alpha\in\g_\alpha$. Let}
\begin{align*}
h=\sum_{\alpha\in\Phi^+}\alpha^\vee\in\car,&&
e=\sum_{\alpha\in\Delta} e_\alpha.
\end{align*} 
The Hessenberg variety $\Pet:=\Hess(e,H_0)$ is called the \emph{Peterson variety},
and the Hessenberg variety $\Perm:=\Hess(h,H_0)$
is called the \emph{permutahedral variety}.

Recall that $h$ is a regular semisimple element, with $\widetilde C_G(h)=T$,
and hence $T$ acts on $\Hess(h,H)$.
Let $\mathfrak s=\IC h$ be the one-dimensional Lie algebra spanned by $h$,
and let $S\subset T$ be the one-dimensional torus corresponding to $\mathfrak s\subset\mathfrak h$.
Since $[h,e]=2e$, we have $S\subset \widetilde C_G(e)$,
and hence an $S$-action on $\Hess(e,H)$.
Let $t$ be the image of a simple root $\alpha$ under the natural restriction $\car^*\to\mathfrak s^*$.
The element $t$ is independent of the choice of $\alpha$,
and further, $H^*_S(pt;\IQ)=\IQ[t]$.

Under the mild assumption $H_0\subset H$,
Abe, Fujita, and Zeng \cite[Cor. 3.9]{abe.fujita.zeng} have identified the Poincar\'e dual of the fundamental class $[\Hess(x,H)]$ in ordinary cohomology
as the Euler class of the vector bundle $G\times^B(\g/H)$.
Our first result (\cref{HessClass}) is an extension of their result to the equivariant setup.
In type A, this result was first proved by Anderson and Tymozcko in \cite{MR2609167}.

For $\lambda$ a character of $T$,
we denote by $\mathcal L_\lambda\to G/B$ the line bundle $\mathcal L_\lambda:=G\times^B\IC_{-\lambda}$.
Let $c_1^T$ (resp. $c_1^S$) denote the $T$ (resp. $S$)-equivariant first Chern class.

\begin{thm}
\label{intro:HessClass}
Suppose $H_0\subset H$.
We have the following equalities in the equivariant homology of the flag manifold $G/B$: 
\begin{align*}
[\Hess(e,H)]_S&=\prod\limits\left(c_1^S(\mathcal L_\alpha)-t\right)\cap[\Fl]_S,\\
[\Hess(h,H)]_T&=\prod\left(c_1^T(\mathcal L_\alpha)\right)\cap[\Fl]_T,
\end{align*}
where the product is over the set $\set{\alpha\in\Phi}{\g_{-\alpha}\not\subset H}$. 
\end{thm}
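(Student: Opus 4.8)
The plan is to upgrade the non-equivariant identification of Abe--Fujita--Zeng to the equivariant setting by exhibiting $\Hess(x,H)$ explicitly as the zero locus of a tautological section of $G\times^B(\g/H)$ and then tracking how the relevant torus acts on that section. Concretely, I would define
\[
s_x\colon G/B\to G\times^B(\g/H),\qquad s_x(gB)=\bigl[g,\ \operatorname{Ad}(g^{-1})x\bmod H\bigr];
\]
this is well defined because replacing $g$ by $gb$ rescales the two entries compatibly, and its zero locus is exactly $\Hess(x,H)$. The cited result of Abe--Fujita--Zeng amounts to the statement that $s_x$ is a regular section, so that $[\Hess(x,H)]$ is Poincar\'e dual to the Euler class of $G\times^B(\g/H)$ in ordinary cohomology. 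Since $H\supset H_0\supset\bo$, the $B$-module $\g/H$ carries a filtration by $B$-submodules whose successive quotients are the root lines $\g_{-\alpha}$ with $\g_{-\alpha}\not\subset H$, each of $T$-weight $-\alpha$; hence $G\times^B\g_{-\alpha}=G\times^B\IC_{-\alpha}=\mathcal L_\alpha$, and the Euler class factors as $\prod c_1(\mathcal L_\alpha)$ over the indicated set of roots.

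For the permutahedral variety I would first treat $x=h$ with the $T$-action. Because $h\in\car$ is fixed by $T$, the identity $\operatorname{Ad}((ug)^{-1})h=\operatorname{Ad}(g^{-1})h$ shows $s_h$ is genuinely $T$-equivariant for the natural action $u\cdot[g,v]=[ug,v]$ on the bundle. The entire picture then lifts verbatim: the equivariant Euler class is the product of the $T$-equivariant first Chern classes of the line bundles in the filtration, giving $[\Hess(h,H)]_T=\prod c_1^T(\mathcal L_\alpha)\cap[\Fl]_T$.

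The Peterson case $x=e$ is where the shift by $t$ enters, and it is the one point requiring care. Here $S$ does not fix $e$: since each $e_\alpha$ has $T$-weight $\alpha$ and every simple root restricts to $t$ on $\mathfrak s$, one has $\operatorname{Ad}(s^{-1})e=t(s)^{-1}e$, so
\[
s_e(s\cdot gB)=\bigl[sg,\ t(s)^{-1}\operatorname{Ad}(g^{-1})e\bmod H\bigr]=t(s)^{-1}\,\bigl(s\cdot s_e(gB)\bigr).
\]
Thus $s_e$ fails to be $S$-equivariant for the natural action, but becomes equivariant after twisting the bundle by the one-dimensional $S$-representation $\IC_{-t}$ of weight $-t$ (the inverse of the $S$-character of $e$), whose zero locus is still $\Hess(e,H)$. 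Twisting each factor $\mathcal L_\alpha$ by $\IC_{-t}$ shifts its equivariant first Chern class by $c_1^S(\IC_{-t})=-t$, so the $S$-equivariant Euler class becomes $\prod(c_1^S(\mathcal L_\alpha)-t)$, which yields the first displayed formula.

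The main obstacle is not this bookkeeping but justifying that the principle ``zero locus equals equivariant Euler class'' survives the passage to equivariant cohomology, i.e.\ that the (twisted) equivariant section is equivariantly regular, so that its equivariant Euler class equals the equivariant fundamental class of its vanishing locus. I expect to handle this via the Borel construction: replacing $ES$ (resp.\ $ET$) by finite-dimensional approximations reduces the equivariant statement to the ordinary one over each approximation, where the vanishing scheme is unchanged and the Abe--Fujita--Zeng regularity applies. Once this reduction is in place, both displayed equalities follow from the weight computations above.
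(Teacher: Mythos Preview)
Your proposal is correct and follows essentially the same route as the paper: define the tautological section $s_x(gB)=[g,\operatorname{Ad}(g^{-1})x\bmod H]$, invoke Abe--Fujita--Zeng for regularity, use the filtration of $G\times^B(\g/H)$ by the line bundles $\mathcal L_\alpha$, observe that $s_h$ is $T$-invariant while $s_e$ lies in the weight-$t$ eigenspace for $S$, and twist by $\underline\IC_{-t}$ in the latter case to recover an invariant section and the shift by $-t$. The only packaging difference is that the paper isolates the statement ``invariant regular section $\Rightarrow$ equivariant fundamental class $=$ equivariant Euler class'' (and its twisted variant) as a preliminary corollary, citing Graham--Kreiman \cite{graham2020cominuscule}, rather than sketching the Borel-approximation argument in situ as you do; the underlying justification is the same.
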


Recall the Schubert classes $\sigma^S_w\in H^{\ell(w)}_S(G/B)$,
Poincar\'e dual to the Schubert varieties $X^w=\overline{B^-wB/B}$.
Let $i^*:H_S^*(G/B)\to H_S^*(\Pet)$ be the pullback induced by the inclusion $i:\Pet\to G/B$,
and let $p_v=i^*\sigma^S_v$.
For convenience, we write $\sigma_\alpha^S=\sigma_{s_\alpha}^S$ and $p_\alpha=i^*\sigma^S_\alpha$ for $\alpha\in\Delta$.
An element $v_I\in W$ is called a \emph{Coxeter element} for some $I\subset\Delta$
if each simple reflection $s_\alpha$, $\alpha\in I$ appears exactly once in a reduced expression of $v_I$.

Fix a Coxeter element $v_I$ for each $I\subset\Delta$.
In \cite{gms:peterson} we prove that $\set{p_{v_I}}{I\subset\Delta}$ is a basis for $H^*_S(\Pet)$ in all Lie types, and that the structure constants $c_{IJ}^K\in H_S^*(pt)$ defined by the equation
\begin{equation}\label{eq:pstruct}
p_{v_I}p_{v_J} = \sum_{K\subset \Delta} c_{IJ}^K p_{v_K}
\end{equation}
are polynomials in $t$ with non-negative coefficients.
The positivity of $c_{IJ}^K$ can also be derived from \cite{drellich:monk}.

Our next result (\cref{Giambelli})
is an equivariant Giambelli formula, 
expressing the pullback of a \emph{Coxeter Schubert class} as a polynomial in the divisor classes $p_\alpha$.
\begin{thm}[Giambelli formula]
\label{intro:Giambelli}
Let $v_I$ be a Coxeter element for $I$, 
let $R(v_I)$ be the number of reduced words for $v_I$,
and let $\Omega_I=\prod_{\alpha\in I}p_\alpha\in H^*_S(\Pet)$.
We have
\begin{align}
\label{intro:eq:Giambelli}
p_{v_I}=\frac{R(v_I)}{|I|!} \Omega_I.
\end{align}
\end{thm}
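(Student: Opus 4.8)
The plan is to verify \cref{intro:eq:Giambelli} by restricting to the fixed points of the $S$-action and exploiting the triangularity of Coxeter classes. Recall that the $S$-fixed points of $\Pet$ are indexed by subsets $K\subseteq\Delta$, the fixed point $p_K$ being the $T$-fixed point $w_KB$, where $w_K$ denotes the longest element of the parabolic subgroup $W_K=\langle s_\alpha:\alpha\in K\rangle$. Since $\{p_{v_K}\}$ is a basis \cite{gms:peterson}, the restriction map $H^*_S(\Pet)\to\bigoplus_K H^*_S(p_K)$ is injective, so it suffices to check every identity after localization. Two inputs drive the computation: the restriction of a Schubert class to a $T$-fixed point is given by the Andersen--Jantzen--Soergel/Billey formula as a height-weighted sum over reduced subwords, and under $\car^*\to\mathfrak s^*$ every positive root $\beta$ maps to $\operatorname{ht}(\beta)\,t$; in particular the divisor restriction takes the clean form $p_\alpha|_K=(\omega_\alpha-w_K\omega_\alpha)|_{\mathfrak s}=\bigl(\sum_{i:\,b_i=\alpha}\operatorname{ht}(\beta_i)\bigr)t$ for a fixed reduced word $b_1\cdots b_N$ of $w_K$ with associated roots $\beta_i$.

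The first step is to prove $\Omega_I=\sum_{v'}p_{v'}$, the sum ranging over all Coxeter elements $v'$ for $I$. Because the positions carrying the letter $\alpha$ are disjoint for distinct $\alpha$, expanding $\Omega_I|_K=\prod_{\alpha\in I}p_\alpha|_K$ distributes into a sum over choices of one $\alpha$-position for each $\alpha\in I$, i.e. over transversals. Each transversal, read in increasing order of position, is a subword using every letter of $I$ exactly once, hence a reduced word for a Coxeter element $v'$; conversely Billey's formula computes $p_{v'}|_K$ as precisely the sum over such subwords, with each $v'$ determined by its (distinct) letters. Matching terms gives $\Omega_I|_K=\sum_{v'}p_{v'}|_K$ for every $K$, and injectivity of localization yields the claimed identity in $H^*_S(\Pet)$.

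The second step is a support-and-degree argument identifying each $p_{v'}$ as a scalar multiple of $p_{v_I}$. Since $\sigma^S_{v}|_{w_K}\neq0$ forces $v\le w_K$, i.e. $v\in W_K$, the class $p_{v'}$ (and likewise $\Omega_I$) is supported on $\{K:K\supseteq I\}$, while $p_{v_K}$ is supported on $\{L:L\supseteq K\}$ with nonvanishing restriction at $K$ itself. Writing $p_{v'}=\sum_K a_K p_{v_K}$, homogeneity forces $a_K\in\IQ\cdot t^{|I|-|K|}$, hence $a_K=0$ unless $|K|\le|I|$; the triangular support pattern then forces $a_K=0$ unless $K\supseteq I$. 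Combining $|K|\le|I|$ with $K\supseteq I$ gives $K=I$, so $p_{v'}=d_{v'}\,p_{v_I}$ for a constant $d_{v'}\in\IQ$.

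The crux is the normalization $d_{v'}=R(v')/R(v_I)$, equivalently that $p_{v'}/R(v')$ is independent of the Coxeter element $v'$. Granting this, the first step gives
\[
\Omega_I=\sum_{v'}p_{v'}=\frac{p_{v_I}}{R(v_I)}\sum_{v'}R(v')=\frac{|I|!}{R(v_I)}\,p_{v_I},
\]
since the reduced words of the Coxeter elements for $I$ are exactly the $|I|!$ orderings of $I$, whence $\sum_{v'}R(v')=|I|!$; rearranging yields \cref{intro:eq:Giambelli}. I expect this normalization to be the main obstacle. The heights $\operatorname{ht}(\beta)$ entering Billey's formula genuinely depend on position rather than on the letter alone (already in $B_2$ the two occurrences of a simple reflection in a reduced word of $w_0$ carry different heights), so $R(v_I)\,\sigma^S_{v'}|_{w_K}=R(v')\,\sigma^S_{v_I}|_{w_K}$ is a nontrivial height-weighted subword identity in which the contributions must conspire to cancel. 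I plan to prove it by induction on $|I|$, using that any two Coxeter elements are linked by rotations $v'\mapsto s_a v's_a$ at an initial letter $s_a$, matched against the linear-extension recursion $R(v')=\sum_{a}R(s_a v')$ over initial letters $a$; establishing that the rotation is compatible with the restriction formula is the step requiring the decisive height computation.
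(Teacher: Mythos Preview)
Your first two steps are correct and rather elegant: the identity $\Omega_I=\sum_{v'}p_{v'}$ (sum over all Coxeter elements for $I$) follows cleanly from Billey's formula at each $w_K$ by matching transversals with reduced subwords, and the triangularity/degree argument does force every $p_{v'}$ to be a constant multiple of $p_{v_I}$. But the proposal is genuinely incomplete: the normalization $d_{v'}=R(v')/R(v_I)$ is precisely the content of the theorem, and you have not proved it. You yourself flag it as ``the main obstacle'' and offer only a plan involving cyclic rotations $v'\mapsto s_av's_a$ and an unspecified ``decisive height computation''. It is not at all clear that rotation interacts with the Billey weights so as to yield $R(v_I)\,p_{v'}|_{w_K}=R(v')\,p_{v_I}|_{w_K}$; the heights attached to the occurrences of a given letter in a reduced word of $w_K$ are genuinely position-dependent, as you note, and the recursion $R(v')=\sum_aR(s_av')$ concerns deletion of an initial letter rather than rotation. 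Without this step you have only shown that $\Omega_I$ is \emph{some} nonzero scalar multiple of $p_{v_I}$, which falls short of \cref{intro:eq:Giambelli}.

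The paper takes a different route that sidesteps this combinatorial difficulty. It first proves the \emph{non-equivariant} Giambelli formula by transporting Klyachko's identity
\[
j^*\sigma_v=\frac{1}{\ell(v)!}\sum_{\underline v\in\mathcal R(v)}\prod_{s_\alpha\in\underline v}j^*\sigma_\alpha
\]
from the permutahedral variety $\Perm$ to $\Pet$ via the Brosnan--Chow/B\u alibanu--Crooks identification $H^*(\Pet)\cong H^*(\Perm)^W$; this immediately produces the constant $R(v_I)/|I|!$. The equivariant lift is then a separate short argument: after reducing to $I=\Delta$ via stability, one writes $p_{v_\Delta}=\sum_Jc^J\Omega_J$, notes that each $c^J$ is a $t$-monomial of degree $|\Delta|-|J|$, reads off $c^\Delta$ by specializing $t\to0$, and kills the lower $c^J$ by pulling back along $\iota_J:\Pet_J\hookrightarrow\Pet$ and invoking $\iota_J^*p_{v_\Delta}=0$. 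In other words, the paper \emph{imports} the normalization from Klyachko rather than deriving it from Billey's formula. Your Steps~1--2 in fact already give $\Omega_I=C\,p_{v_I}$ equivariantly (so you have independently reproved the paper's vanishing of lower terms); the cleanest way to finish your argument is to restrict this equality to ordinary cohomology and read off $C=|I|!/R(v_I)$ from Klyachko's formula, exactly as the paper does.
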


\Cref{intro:eq:Giambelli} was first obtained by Drellich \cite{drellich:monk}
for a particular choice of Coxeter element $v_I$ for each $I$,
using a type by type analysis,
and the localization formula of Andersen, Jantzen, and Soergel \cite{AJS},
and Billey \cite{billey:kostant}.
Our proof has the benefit of being type independent, and working for all Coxeter elements.

We sketch an alternate proof of \cref{intro:Giambelli}, suggested by a referee.
Drellich's proof \cite[Lemma 5.1]{drellich:monk}, stating $p_{v_I}=C \Omega_I$ for some $C\neq 0$,
does not depend on the choice of Coxeter element made by the author.
On the other hand it follows from \cite[Thm. 4]{klyachko85} that $p_{v_I}=\frac{R(v_I)}{|I|!} \Omega_I$ in the ordinary cohomology $H^*(\Perm)$ for any Coxter element $v_I$.
These observations, combined with the isomorphism $H^*(\Pet)\cong H^*(\Perm)^W$ proved in \cite{MR4116638}, results in \cref{intro:Giambelli}.

The Peterson variety admits a cell-stratification,
$
\Pet=\bigsqcup_{I\subset\Delta}\Pet_I^\circ,
$
see \cite{tymoczko:paving,precup,balibanu:peterson}.
Consequently, we have a natural basis for the equivariant homology $H_*^S(\Pet)$,
given by the fundamental classes $\set{[\Pet_I]_S}{I\subset\Delta}$.
Let 
\begin{align*}
\left\langle\ ,\,\right\rangle:H^*_S(\Pet)\times H_*^S(\Pet)\to H^*_S(pt)
\end{align*}
 be the pairing given by equivariant integration,
i.e., $\left\langle\omega,[Z]\right\rangle=\int_{[Z]}\omega$. 
Following \cite[Thm 1.1]{gms:peterson},
if $v_I$ is a Coxeter element for some $I\subset\Delta$,
then
\begin{align}
\label{intro:dual}
\left\langle p_{v_I},[\Pet_J]_S\right\rangle=m(v_I)\delta_{IJ}
\end{align}
for some positive integer $m(v_I)$.
In particular, $\set{p_{v_I}}{I\subset\Delta}$ is a basis of $H_S^*(\Pet)$,
dual (up to scaling) to the fundamental class basis $\set{[\Pet_I]}{I\subset\Delta}$.

The multiplicities $m(v_I)$ depend only on $I$ and not on $\Delta$.
An algorithm for computing these multiplicities using the localization of a Schubert class to a specific point was provided in \cite[Prop 7.3]{gms:peterson},
resulting in a formula for the $m(v_I)$ for certain Coxeter elements $v_I$.
We prove in \cref{thm:mult} a general formula for $m(v_I)$, for any Coxeter element $v_I$, confirming the conjecture in \cite[Remark 7.7]{gms:peterson}.


Let $C_I$ be the Cartan matrix of $I$.
Recall that $f_I:=\det(C_I)$ is called the \emph{connection index} of $I$, see \cite{bourbaki:Lie46}.

\begin{thm}[Multiplicity Formula]
\label{intro:thm:mult}
Let $v_I$ be a Coxeter element of $I$,
and let $R(v_I)$ denote the number of reduced expressions for $v_I$.
We have 
\begin{equation*}
m(v_I)=\left\langle p_{v_I},[\Pet_I]_S\right\rangle=\frac{R(v_I)|W_I|}{|I|!\,f_I}.
\end{equation*}
\end{thm}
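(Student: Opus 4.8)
The plan is to use the Giambelli formula (\cref{intro:Giambelli}) to reduce the pairing to a single top intersection number on the flag manifold, then to evaluate that number. By \cref{intro:Giambelli} we have $p_{v_I}=\frac{R(v_I)}{|I|!}\Omega_I$, so
\[
m(v_I)=\langle p_{v_I},[\Pet_I]_S\rangle=\frac{R(v_I)}{|I|!}\,\langle\Omega_I,[\Pet_I]_S\rangle .
\]
Since $\Omega_I=\prod_{\alpha\in I}p_\alpha$ has complex degree $|I|=\dim_\IC\Pet_I$, the pairing $\langle\Omega_I,[\Pet_I]_S\rangle$ lands in $H^0_S(pt)=\IQ$ and may be computed non-equivariantly, i.e.\ I set $t=0$ and work in ordinary (co)homology. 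It therefore suffices to prove $\langle\Omega_I,[\Pet_I]_S\rangle=|W_I|/f_I$.

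Next I would reduce to the full-rank case. Using the cell stratification of the Peterson variety and the identification of stratum closures with sub-Peterson varieties (as in the works cited in the excerpt), the closure $\Pet_I$ is, under the embedding $G_I/B_I\hookrightarrow G/B$, the Peterson variety of the subgroup $G_I$ generated by the root subgroups indexed by $I$; and for $\alpha\in I$ the Schubert divisor $\sigma_\alpha$ restricts along this embedding to the corresponding Schubert divisor of $G_I/B_I$. The projection formula then gives $\langle\Omega_I,[\Pet_I]_S\rangle=\int_{\Pet(G_I)}\prod_{\alpha\in I}\sigma_\alpha$, where $v_I$ is a Coxeter element for the full diagram of $G_I$. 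Thus the theorem follows once I establish, for an arbitrary semisimple $G$ with simple roots $\Delta$ and Cartan matrix $C$,
\[
\int_{\Pet}\prod_{\alpha\in\Delta}\sigma_\alpha=\frac{|W|}{f},\qquad f=\det C .
\]

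To evaluate this I would invoke \cref{intro:HessClass} at $t=0$, which gives the non-equivariant class $[\Pet]=\prod_{\beta\in\phi^+\setminus\Delta}c_1(\mathcal L_\beta)\cap[\Fl]$ (the set $\{\alpha:\g_{-\alpha}\not\subset H_0\}$ is exactly $\phi^+\setminus\Delta$). Combining the projection formula with $\sigma_\alpha=c_1(\mathcal L_{\omega_\alpha})$ in $H^2(\Fl)$ turns the left-hand side into a top intersection number on $\Fl$,
\[
\int_{\Pet}\prod_{\alpha\in\Delta}\sigma_\alpha=\int_{\Fl}\prod_{\alpha\in\Delta}c_1(\mathcal L_{\omega_\alpha})\prod_{\beta\in\phi^+\setminus\Delta}c_1(\mathcal L_\beta).
\]
Passing to the coinvariant description of $H^*(\Fl;\IQ)$, in which $c_1(\mathcal L_\mu)\mapsto\bar\mu$ and the one-dimensional top degree is the sign representation, the integration functional is the projection onto the sign-isotypic component; so I may replace the integrand by its $W$-antisymmetrization, and the classical value $\int_{\Fl}\prod_{\beta\in\phi^+}c_1(\mathcal L_\beta)=|W|$ reduces the identity above to the purely Weyl-group statement
\[
\frac{1}{|W|}\sum_{w\in W}\operatorname{sgn}(w)\,w\!\cdot\!\Big(\prod_{\alpha\in\Delta}\omega_\alpha\prod_{\beta\in\phi^+\setminus\Delta}\beta\Big)=\frac{1}{f}\prod_{\beta\in\phi^+}\beta .
\]

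The hard part is precisely this final antisymmetrization identity, and in particular seeing the connection index $f=\det C=[\Lambda_{wt}:\Lambda_{rt}]$ emerge. I expect to prove it by one of two routes. Toric-geometrically, since $\Pet$ and the permutahedral variety $\Perm$ share the above fundamental class, the number equals $|\Delta|!$ times the mixed volume of the weight polytopes $\operatorname{conv}(W\omega_\alpha)$ on $\Perm$; the $|W|$ arises from the $|W|$ maximal Weyl chambers and the factor $1/f$ precisely from measuring these polytopes against the effective (root) lattice rather than the weight lattice. Algebraically, one expands $\prod_{\alpha}\omega_\alpha$ in the simple roots via $C^{-1}$ and tracks which monomials survive antisymmetrization, $\det C$ appearing through this change of basis; a naive "antisymmetrization transforms by a determinant'' is false, so the genuine content is controlling the surviving terms. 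I would sanity-check both routes against low rank (for example $A_1$ and $A_2$ give $\langle\Omega_\Delta,[\Pet]\rangle=1$ and $2$, matching $|W|/f=2/2$ and $6/3$).
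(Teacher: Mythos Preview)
Your reduction is the same as the paper's: apply the Giambelli formula to replace $p_{v_I}$ by $\frac{R(v_I)}{|I|!}\Omega_I$, use that the pairing lands in $H^0_S(pt)$ to pass to ordinary cohomology, and use stability (\cref{prop:duality}, last clause) to replace $I$ by the full diagram. So the proof comes down to
\[
\int_{\Pet}\prod_{\alpha\in\Delta}\sigma_\alpha=\frac{|W|}{f_\Delta},
\]
exactly as you state.

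The divergence is in how this integral is handled. You push it to a top intersection on $\Fl$ and then try to prove the resulting antisymmetrization identity from scratch; you explicitly flag this as the hard, unfinished part. The paper does not attempt this computation. Instead it observes, directly from \cref{intro:HessClass} at $t=0$, that $[\Pet]=[\Perm]$ in $H_*(\Fl)$ (both equal $\prod_{\beta\in\phi^+\setminus\Delta}c_1(\mathcal L_\beta)\cap[\Fl]$), so the integral equals $\int_{\Perm}\prod_{\alpha\in\Delta}\sigma_\alpha$, and then \emph{cites} Klyachko \cite[Thm~3]{klyachko85} for the value $|W|/f_\Delta$ on the permutahedral variety. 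Your ``toric route'' is essentially a sketch of Klyachko's proof, so you are re-deriving a result the paper takes as input.

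Thus your proposal is correct in structure but incomplete as written: the missing ingredient is precisely Klyachko's evaluation. If you want a self-contained proof, you must finish one of your two routes; otherwise, note $[\Pet]=[\Perm]$ and invoke Klyachko.
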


Let us say a few words about the proofs of \cref{intro:Giambelli,intro:thm:mult}.
The Hessenberg varieties $\Hess(x,H)$,
as $x$ varies over the set of regular elements in $\mathfrak g$,
form a flat family of subvarieties in $G/B$,
see \cite[Prop. 6.1]{abe.fujita.zeng}.
Abe, Horiguchi, Masuda, Murai, and Sato \cite{MR4116638} proved the following diagram is commutative.
\begin{equation}
\label{intro:commTri}
\begin{tikzcd}
&H^*(\Fl)\arrow[dl,swap,"i^*",twoheadrightarrow]\arrow[d,twoheadrightarrow,dashed]\arrow[dr,"j^*"]&\\
H^*(\Pet)\arrow[r,"\sim"]&H^*(\Perm)^W\arrow[hook,r]&H^*(\Perm).
\end{tikzcd}
\end{equation}
In type A, this was earlier proved in \cite{ahhm:cohomologyring}.
Here $j^*:H^*(G/B)\to H^*(\Perm)$ is the pullback induced by the inclusion $j:\Perm\to G/B$.
This allows us to relate computations in the ordinary cohomology of the Peterson variety
to corresponding computations in the ordinary cohomology of the permutahedral variety.
In \cite{klyachko85,klyachko95} (see also \cite{nadeau.tewari}),
Klyachko presented a Giambelli formula expressing the pullback class
$j^*\sigma_w$ of any Schubert class as a polynomial in the divisor classes $j^*\sigma_{s_\alpha}$,
\begin{align}
\label{intro:eq:klyachko}
j^*\sigma_v=\frac{1}{\ell(v)!}\sum_{\underline v\in\mathcal R(v)}\prod_{s_\alpha\in\underline v}j^*\sigma_{s_\alpha}.
\end{align}
Here $\mathcal R(v)$ is the set of reduced expressions for $v$,
and the product is over all occurrences of $s_\alpha$ in the reduced expression $\underline v$.
Following \cref{intro:commTri}, the same relation holds amongst the $i^*\sigma_v$ and $i^*\sigma_\alpha$ (in ordinary cohomology).
\Cref{intro:Giambelli} is an equivariant version of \cref{intro:eq:klyachko}.
We then use \cref{intro:Giambelli} and the duality (\cref{intro:dual}) 
to reduce the calculation of $m(v_I)$ in \cref{intro:thm:mult} to the non-equivariant integral 
$\int_\Perm\prod_{\alpha\in\Delta}\sigma_{s_\alpha}$,
which we found in \cite{klyachko85}.

In our final results, we develop a \emph{Chevalley formula} (\cref{thm:chevalley}),
and a dual \emph{Monk rule} (\cref{thm:monk}).
A Monk rule for the ordinary cohomology of \Pet\ was recently obtained by Horiguchi \cite{horiguchi2021mixed}.
The equivalence of the Chevalley formula and the Monk rule is a consequence of \cref{intro:dual,intro:thm:mult}.

Recall that for any Dynkin subdiagram $J\subset\Delta$,
we have unique elements $\varpi_\alpha^J$ in the weight lattice of $J$,
called the fundamental weights,
satisfying $\left\langle\beta^\vee,\varpi_\alpha^J\right\rangle=\delta_{\alpha\beta}$ for all $\beta\in J$.
Similarly, we have fundamental coweights $\varpi_\alpha^{J\vee}$ in the coweight lattice of $J$,
dual to the roots $\alpha\in J$.
We write $\varpi_\alpha$ (resp. $\varpi_\alpha^\vee$) for the fundamental weights (resp. coweights) for $\Delta$.
In general, we have
$\varpi_\alpha^I\neq\varpi_\alpha$ and 
$\varpi^{I\vee}_\alpha\neq\varpi^\vee_\alpha$, see \cref{sec:stab}.

\begin{thm}[Equivariant Chevalley formula]
\label{intro:thm:chevalley}
For $\alpha\in\Delta$, $J\subset\Delta$, we have
\begin{align*}
p_\alpha\cap[\Pet_J]_S=
\begin{cases}
0 &\text{if }\alpha\not\in J,\\
\left\langle2\rho_J^\vee,\varpi_\alpha\right\rangle t\,[\Pet_J]_S+
\sum\limits_{\substack{\beta\in J\\ K=J\backslash\{\beta\}}}
\left\langle\varpi^{J\vee}_\beta,\varpi^J_\alpha\right\rangle
\frac{|W_J|}{|W_K|}[\Pet_K]_S
&\text{if }\alpha\in J.
\end{cases}
\end{align*}
Here $\rho_J^\vee=\frac 12\sum_{\alpha\in\Phi_J^+}\alpha^\vee$ is one-half the sum of the positive coroots supported on $J$,
and $W_J$ and $W_K$ are the Weyl subgroups of the Dynkin diagrams $J$ and $K$ respectively.
\end{thm}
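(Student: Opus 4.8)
The plan is to convert the cap product into a finite sum, extract its coefficients by pairing against the dual basis $\set{p_{v_K}}{K\subset\Delta}$, and then evaluate the resulting pairings with \cref{intro:Giambelli,intro:thm:mult}, Klyachko's intersection numbers \cref{intro:eq:klyachko}, and equivariant localization. First I would record the shape of the answer. Since cap product preserves supports and $[\Pet_J]_S$ is carried by the closed stratum $\Pet_J=\bar{\Pet_J^\circ}$, whose Peterson strata are exactly the $\Pet_K^\circ$ with $K\subset J$, the class $p_\alpha\cap[\Pet_J]_S$ is an $H_S^*(pt)$-combination of the $[\Pet_K]_S$ with $K\subset J$. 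Writing $p_\alpha\cap[\Pet_J]_S=\sum_K f_K(t)[\Pet_K]_S$, the grading forces $\deg f_K=|K|-|J|+1$, so only $K=J$ (with $f_J\in\IQ t$) and $K=J\setminus\{\beta\}$ (with $f_K\in\IQ$ a constant) can occur. For $\alpha\notin J$ I would argue separately that $p_\alpha$ restricts to $0$ on $\Pet_J$: identifying $\Pet_J$ with the Peterson variety of the sub-diagram $J$ and using that the Schubert divisor $\sigma_\alpha^S$ with $\alpha\notin J$ pulls back to $0$ there, the projection formula gives $p_\alpha\cap[\Pet_J]_S=0$ at once.

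For $\alpha\in J$ I would pin down the surviving coefficients by duality. The cap--cup adjunction $\left\langle p_{v_K},p_\alpha\cap[\Pet_J]_S\right\rangle=\left\langle p_{v_K}p_\alpha,[\Pet_J]_S\right\rangle$ together with the orthogonality \cref{intro:dual} gives
\[
f_K(t)=\frac{1}{m(v_K)}\left\langle p_{v_K}\,p_\alpha,\,[\Pet_J]_S\right\rangle,
\]
and \cref{intro:Giambelli} rewrites $p_{v_K}p_\alpha=\tfrac{R(v_K)}{|K|!}\,p_\alpha\Omega_K$, reducing everything to the monomial pairings $\left\langle p_\alpha\Omega_K,[\Pet_J]_S\right\rangle$.

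For the constant coefficients ($K=J\setminus\{\beta\}$) there are two sub-cases. If $\beta=\alpha$ then $p_\alpha\Omega_{J\setminus\alpha}=\Omega_J$, which by \cref{intro:Giambelli} is a scalar multiple of $p_{v_J}$; the pairing is then computed outright from \cref{intro:thm:mult}, and one checks it equals the diagonal inverse-Cartan entry $\left\langle\varpi_\alpha^{J\vee},\varpi_\alpha^J\right\rangle=f_{J\setminus\alpha}/f_J$ times $|W_J|/|W_K|$. If $\beta\neq\alpha$ the monomial $p_\alpha\Omega_{J\setminus\beta}=p_\alpha^2\prod_{\gamma\in J\setminus\{\alpha,\beta\}}p_\gamma$ carries a repeated factor and is no longer a Coxeter class; here I would reduce modulo $t$ (legitimate, as these coefficients are constants) and transport the computation along the commutative triangle \cref{intro:commTri} to the permutahedral variety, where Klyachko's Giambelli formula \cref{intro:eq:klyachko} makes the intersection number explicit. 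Matching this number, after reinstating the $|W_J|/|W_K|$ factors through \cref{intro:thm:mult}, with the off-diagonal entry $\left\langle\varpi_\beta^{J\vee},\varpi_\alpha^J\right\rangle=(C_J^{-1})_{\beta\alpha}$ of the inverse Cartan matrix yields the non-equivariant part of the formula.

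The genuinely equivariant coefficient $f_J\in\IQ t$ is where the main difficulty lies, since it is invisible modulo $t$ and so cannot be read off from the permutahedral variety. To compute $\left\langle p_\alpha\Omega_J,[\Pet_J]_S\right\rangle$ I would use equivariant localization at the $S$-fixed point lying in the open stratum $\Pet_J^\circ$: there the tangent directions that contribute are precisely those indexed by $\phi_J^+$, so the restriction of $p_\alpha$ produces the weight $\sum_{\gamma\in\phi_J^+}\left\langle\gamma^\vee,\varpi_\alpha\right\rangle t=\left\langle2\rho_J^\vee,\varpi_\alpha\right\rangle t$, even though the ambient $S$-action is generated by the full $h=2\rho^\vee$; dividing by $m(v_J)$ and fixing normalizations by the $\beta=\alpha$ computation gives the coefficient $\left\langle2\rho_J^\vee,\varpi_\alpha\right\rangle$ of $t\,[\Pet_J]_S$. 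An alternative would be to restrict the equivariant Chevalley--Monk formula on $G/B$ along the surjection $i^*$ and track the diagonal weight $\varpi_\alpha-v_J\varpi_\alpha$ under $\car^*\to\mathfrak s^*$; the bookkeeping obstacle there is that the pulled-back non-Coxeter classes $i^*\sigma^S_{v_Js_\gamma}$ re-expand in the Coxeter basis and contribute their own $t$-terms to the coefficient of $[\Pet_J]_S$, and these must be shown to assemble into the same $\left\langle2\rho_J^\vee,\varpi_\alpha\right\rangle t$. Either way, isolating this equivariant term cleanly is the crux of the argument.
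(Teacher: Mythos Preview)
Your approach is genuinely different from the paper's, and the difference is instructive. You try to compute $p_\alpha\cap[\Pet_J]_S$ directly by extracting coefficients via the dual basis and then evaluating the resulting intersection numbers. The paper instead makes a change of variables: it sets $q_\alpha=\sum_\beta\langle\beta^\vee,\alpha\rangle p_\beta$, proves geometrically that $(q_\beta-2t)\cap[\Pet_J]_S=\frac{|W_J|}{|W_K|}[\Pet_K]_S$ for $K=J\setminus\{\beta\}$ (this is \cref{qChevalley}, established via an Euler-class computation on $G/P$ together with the product formula \cref{PetClass} for $[\Pet]_S$), and then simply inverts the Cartan matrix, writing $p_\alpha=\sum_\beta\langle\varpi_\beta^{J\vee},\varpi_\alpha^J\rangle q_\beta$ on $\Pet_J$. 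This handles the equivariant and non-equivariant terms simultaneously, with no localization and no case analysis on $\beta=\alpha$ versus $\beta\neq\alpha$.

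Your proposal has two soft spots. First, for the off-diagonal non-equivariant coefficients you need $\int_{\Pet_J}p_\alpha^2\prod_{\gamma\in J\setminus\{\alpha,\beta\}}p_\gamma$, and Klyachko's Giambelli formula \eqref{intro:eq:klyachko} does not by itself give this; you would need his ring presentation (the relations $p_\gamma q_\gamma=0$) and a separate linear-algebra argument to identify the answer with $(C_J^{-1})_{\beta\alpha}\cdot|W_J|/f_J$. That argument is essentially the Cartan-inversion step the paper does, but carried out after integration rather than before. Second, and more seriously, your equivariant coefficient $f_J$ is not actually computed: the localization sketch asserts that $p_\alpha$ restricts to $\langle 2\rho_J^\vee,\varpi_\alpha\rangle t$ at $w_J$, which is correct, but to get $\langle p_\alpha\Omega_J,[\Pet_J]_S\rangle$ you would need the full Atiyah--Bott sum over all fixed points $w_K$, $K\subset J$, together with the equivariant multiplicities of the (generally singular) $\Pet_J$ at those points, and you do not indicate how the contributions from $K\subsetneq J$ are controlled. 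Your alternative route through the equivariant Chevalley formula on $G/B$ has the same problem you yourself flag: the non-Coxeter classes $i^*\sigma_{v_Js_\gamma}^S$ feed back $t$-terms whose bookkeeping you leave open. The paper's $q_\alpha$ trick is precisely what sidesteps this: in the $q$-variables the cap product with a single divisor has exactly one lower term and one $2t$-term, so nothing needs to be disentangled.
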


It is common in the literature 
(see, for example, \cite{drellich:monk,insko.tymoczko:intersection.theory,goldin.gorbutt:PetSchubCalc})
to fix a Coxeter element $v_I$ for each $I\subset\Delta$,
and to work with the basis $\set{p_{v_I}}{I\subset\Delta}$ of $H^*_S(\Pet)$. 
Following \cref{intro:Giambelli}, $\set{\Omega_I:=\prod_{\alpha\in I}p_\alpha}{I\subset\Delta}$ is also a basis of $H_S^*(\Pet)$,
related to the basis $\set{p_{v_I}}{I\subset\Delta}$ via a diagonal change of basis matrix.
We develop a Monk rule for the basis $\{\Omega_I\}$, resulting in a formula that does not depend a choice of Coxeter element for each $I$. For the reader's convenience, we present the Monk rule for the basis $\set{p_{v_I}}{I\subset\Delta}$ in \cref{rem:monk}.


\begin{thm}[Equivariant Monk Rule]
\label{intro:thm:monk}
For $\alpha\in\Delta$, we have
\begin{align*}
\Omega_\alpha \Omega_I=
\begin{cases}
\Omega_{I\cup\{\alpha\}}
&\text{if }\alpha\not\in I,\\
2\left\langle\rho^\vee_I,\varpi_\alpha\right\rangle t \Omega_I+
\sum\limits_{\substack{\gamma\in\Delta\backslash I\\J=I\cup\{\gamma\}}}
\dfrac{f_J}{f_I}\left\langle\varpi_\gamma^{J\vee},\varpi^J_\alpha\right\rangle \Omega_J
&\text{if }\alpha\in I.
\end{cases}
\end{align*}
\end{thm}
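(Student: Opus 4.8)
The plan is to prove the Monk rule by dualizing the Chevalley formula (\cref{intro:thm:chevalley}) against the fundamental class basis, exactly as the remark preceding the statement anticipates. Since $\{\Omega_K : K\subset\Delta\}$ is a basis of $H^*_S(\Pet)$ by \cref{intro:Giambelli}, I may write $\Omega_\alpha\Omega_I=\sum_{K\subset\Delta}a_K\Omega_K$ and recover each coefficient $a_K$ by pairing against $[\Pet_K]_S$. The engine is the standard cap--cup adjunction for equivariant integration, $\langle\omega\cup\eta,[Z]\rangle=\langle\omega,\eta\cap[Z]\rangle$, which converts the product in cohomology into a cap product against a homology class and thereby brings the Chevalley formula into play. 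The case $\alpha\notin I$ requires no Chevalley input at all: by the very definition of $\Omega$, multiplication in the ring gives $\Omega_\alpha\Omega_I=p_\alpha\prod_{\beta\in I}p_\beta=\Omega_{I\cup\{\alpha\}}$, so only the case $\alpha\in I$, where a factor $p_\alpha^2$ appears, carries content.

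First I would pin down the pairing between the two bases. Combining \cref{intro:Giambelli}, which gives $\Omega_K=\tfrac{|K|!}{R(v_K)}p_{v_K}$, with the duality (\cref{intro:dual}) and the multiplicity formula (\cref{intro:thm:mult}), the factors of $R(v_K)$ and $|K|!$ cancel and one obtains the clean orthogonality relation $\langle\Omega_{K'},[\Pet_K]_S\rangle=\tfrac{|W_K|}{f_K}\,\delta_{K'K}$. Pairing the expansion of $\Omega_\alpha\Omega_I$ against $[\Pet_K]_S$ then isolates $a_K=\tfrac{f_K}{|W_K|}\langle\Omega_\alpha\Omega_I,[\Pet_K]_S\rangle$, and since $\Omega_\alpha=p_\alpha$ the adjunction rewrites this as $a_K=\tfrac{f_K}{|W_K|}\bigl\langle\Omega_I,\,p_\alpha\cap[\Pet_K]_S\bigr\rangle$.

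Next I would substitute the Chevalley formula for $p_\alpha\cap[\Pet_K]_S$ and use the orthogonality relation to identify the surviving terms. If $\alpha\notin K$ the cap product vanishes, so $a_K=0$; if $\alpha\in K$, the diagonal term $\langle 2\rho^\vee_K,\varpi_\alpha\rangle t\,[\Pet_K]_S$ pairs nontrivially with $\Omega_I$ only when $K=I$, while a term carrying $[\Pet_L]_S$ with $L=K\setminus\{\beta\}$ survives only when $L=I$, i.e.\ $K=I\cup\{\beta\}$. In the case $\alpha\in I$ this produces exactly $a_I=2\langle\rho^\vee_I,\varpi_\alpha\rangle t$ from the diagonal term and $a_J=\tfrac{f_J}{f_I}\langle\varpi^{J\vee}_\gamma,\varpi^J_\alpha\rangle$ for $J=I\cup\{\gamma\}$, $\gamma\in\Delta\setminus I$, once the $|W_I|$ and $|W_J|$ factors telescope against the orthogonality normalization $\tfrac{|W_K|}{f_K}$. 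This is precisely the asserted formula.

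I expect the only genuine obstacle to be bookkeeping rather than conceptual content: verifying that the $R(v_K)$, $|K|!$, and $|W|$ normalizations cancel in exactly the right way so that the surviving Chevalley coefficients reproduce $\tfrac{f_J}{f_I}\langle\varpi^{J\vee}_\gamma,\varpi^J_\alpha\rangle$, and tracking the Kronecker deltas carefully enough to be certain no spurious $a_K$ survives. As a sanity check on the two routes, running the duality computation through in the case $\alpha\notin I$ for $K=I\cup\{\alpha\}$ must also return $a_{I\cup\{\alpha\}}=1$, which amounts to the root-system identity $\langle\varpi^{K\vee}_\alpha,\varpi^K_\alpha\rangle=f_I/f_K$ (a cofactor expression for the inverse Cartan matrix); since that case is already settled by the definition of $\Omega$, this serves only to confirm internal consistency and need not be proved separately.
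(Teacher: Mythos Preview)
Your proposal is correct and follows essentially the same route as the paper: expand $\Omega_\alpha\Omega_I$ in the basis $\{\Omega_K\}$, extract each coefficient by pairing against $[\Pet_K]_S$ using the orthogonality $\langle\Omega_K,[\Pet_{K'}]_S\rangle=\tfrac{|W_K|}{f_K}\delta_{KK'}$, and then feed in the Chevalley formula for $p_\alpha\cap[\Pet_K]_S$. The paper reaches the orthogonality constant directly via \cref{lem:mult} rather than by combining Giambelli, \cref{intro:dual}, and \cref{intro:thm:mult} as you do, but the two derivations are equivalent and the remainder of the argument is identical.
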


Consider $\alpha\in\Delta$, and let $I=\Delta\backslash\{\alpha\}$.
A key step in the proof of
 \cref{intro:thm:chevalley} is the following formula (see \cref{qChevalley}):
\begin{align}
\label{intro:qChevalley}
(c_1^S(i^*\mathcal L_\alpha)-t)\cap[\Pet]_S=\frac{|W|}{|W_I|}[\Pet_I]_S.
\end{align}

In  \cite{harada.horiguchi.masuda:Peterson}, Harada, Horiguchi, and Masuda present a description of $H^*_S(\Pet)$ by generators and relations. As a further consequence of \cref{qChevalley}, we
obtain a new proof of their description; 
see \cref{hhm}.

Let us now outline the organization of the paper.
In \cref{sec:cohomology}, we recall some results on the equivariant cohomology of spaces with affine paving,
as developed by Edidin, Graham, and Kreiman in \cite{edidin.graham,graham:positivity,graham2020cominuscule}.
In \cref{sec:preliminaries},
we recall some results on root systems, flag manifolds, and Schubert varieties.
In \cref{sec:Hessenberg}, we describe Hessenberg varieties,
and compute the Poincar\'e dual of the fundamental class of a regular Hessenberg variety as a polynomial in the Chern classes of line bundles
(\cref{intro:HessClass}).
We also recall from \cite{tymoczko:paving,precup,gms:peterson} some results on Peterson varieties.
In \cref{sec:giambelli}, we describe the relationship obtained in \cite{MR4116638}
between the ordinary cohomology rings
$H^*(\Hess(e,H))$, $H^*(\Hess(h,H))$, and $H^*(G/B)$,
see \cref{intro:commTri}.
We give a different explanation of this relationship,
following ideas developed by Brosnan and Chow \cite{brosnan-chow},
and further refined by B{\u{a}}libanu and Crooks \cite{balibanu2020perverse}.
We then use \cref{intro:commTri} to prove the Giambelli formula (\cref{intro:Giambelli})
and the multiplicity formula (\cref{intro:thm:mult}).
Finally, in \cref{sec:chevalley},
we prove the Chevalley formula (\cref{intro:thm:chevalley}),
its dual Monk rule (\cref{intro:thm:monk}),
and recover the Harada-Horiguchi-Masuda presentation of $H^*_S(\Pet)$.
We also tabulate the structure constants appearing in the Chevalley and Monk formulae,
and present some examples applying these fomulae.

\emph{Acknowledgements:}
We would like to thank both Ana B\u alibanu and Peter Crooks for explaining the results of \cite{balibanu2020perverse} to us, and 
Leonardo Mihalcea for several illuminating discussions. We also thank the referee for a careful and close reading of the manuscript, leading to improvements of several proofs.
Computer calculations in service of this paper were coded in SageMath \cite{sagemath}. 
Parts of this work were conducted while RS was at Virginia Tech,
and parts while at ICERM. RG was supported by National Science Foundation grant \#2152312.
We gratefully acknowledge the support of these institutions.

\section{Equivariant (Co)Homology}
\label{sec:cohomology}

Let $X$ be a complex algebraic variety equipped with a left action of a torus $T$.
We recall aspects of the $T$-equivariant homology and cohomology of $X$.
We will use the Borel model of equivariant cohomology,
and equivariant Borel-Moore homology,
following the setup in Graham's paper \cite{graham:positivity}. 
We refer to \cite[Ch~19]{fulton:IT}, \cite[Appendix~B]{fulton:young}, and \cite[\S 2.6]{chriss.ginzburg}
for more details about cohomology and Borel-Moore homology. 
\emph{We will study (co)homology with rational coefficients}.

%
%

Recall that the cohomology ring $H^*_T(pt)$ of a point is naturally identified with 
$\operatorname{Sym}(\car^*)$, 
the symmetric algebra of the dual of the Lie algebra of $T$. 
The morphism $X \to \{pt\}$ from $X$ to a point
gives the equivariant cohomology $H^*_T(X)$ the structure
of a graded algebra over $H^*_T(pt)$ via the pullback map $H_T^*(pt)\to H_T^*(X)$.
In addition, the cap product
\begin{align*}
\cap: H^k_T(X)\times H_\ell^T(X)\to H_{\ell-k}^T(X)
\end{align*}
endows the
equivariant homology $H_*^T(X)$ with a graded module structure over $H^*_T(X)$. 
Equivalently, there is a compatibility of cap and cup products given by
\begin{align*}
&&(a \cup b) \cap c = a \cap (b \cap c)
&&a,b \in H^*_T(X),\ c \in H_*^T(X).
\end{align*} 

For any map $S\to T$ of tori, 
we have a natural map of algebras $H^*_T(X) \to H^*_S(X)$,
compatible with the algebra map $H^*_T(pt)\to H^*_S(pt)$ induced by $Lie(T)^*\to Lie(S)^*$.
In particular, taking $S$ to be the trivial subgroup in $T$,
we obtain the restriction to ordinary cohomology,
$
H^*_T(X)\to H^*(X)
$.

\subsection{The integration pairing}
\label{sec:pairing}
Each irreducible, $T$-stable, closed subvariety $Z \subset X$ of complex dimension 
$k$ has a fundamental class $[Z]_T \in H_{2k}^T(X)$. If $X$ is smooth and irreducible,
then there exists a unique class $\eta_Z \in H^{2 (\dim X - k)}_T(X)$,
called the Poincar{\'e} dual of $Z$,
such that \begin{equation*}\eta_Z \cap [X]_T = [Z]_T.\end{equation*}

Given a $T$-equivariant proper map $f:X\to Y$, 
there is a push-forward $f_*:H_i^T(X) \to H_i^T(Y)$,
determined by the fact that if $Z \subset X$ is irreducible and $T$-stable, 
then
\begin{align*}
f_*([Z])=
\begin{cases}
d_Z[f(Z)]&\text{if }\dim f(Z)=\dim Z,\\
0&\text{if }\dim f(Z)<\dim Z,
\end{cases}
\end{align*}
where $d_Z$ is the generic degree of the restriction $f:Z \to f(Z)$.
The push-forward and pull-back are related by the projection formula 
\begin{equation}
\label{projectionFormula}
f_* (f^*(\eta) \cap c) = \eta \cap f_*(c),
\end{equation}
for $\eta\in H^*_T(Y)$ and $c\in H^T_*(X)$.
Recall that we have an isomorphism
\begin{align}
\label{pointPair}
&&H^j_T(pt)\xrightarrow\sim H_{-j}^T(pt),&&
a\mapsto a\cap[pt]_T.
\end{align} 
In particular, $H^*_T(pt)$ lives in non-negative degrees,
and $H_*^T(pt)$ lives in non-positive degrees.

Suppose now that $X$ is complete, so that $f: X \to pt$ is proper. 
For a homology class $c \in H_{-j}^T(X)$, we denote by 
$\int_X c$ the class $f_*(c) \in H_{-j}^T(pt)$,
viewed as an element of $H_T^{j}(pt)$ via  \cref{pointPair}.
Then we may define a pairing,
 \begin{equation}
\label{intPairing}
\langle\ , \,\rangle : H^i_T(X) \mathop
\times
 H_{-j}^T(X) \to H^{i+j}_T(pt) \/; \quad 
\langle \eta, c \rangle := \int_X \eta \cap c \/. \end{equation}
The pairing in \cref{intPairing} is compatible with the pairing in ordinary (co)homology.
We have forgetful maps $H^*_T(X)\to H^*(X)$ and $H_*^T(X)\to H_*(X)$,
and a commutative diagram,
\begin{center}
\begin{tikzcd}
H^i_T(X) \mathop\times H_{-j}^T(X)\arrow[r,"\langle\ \text{,}\ \rangle"]\arrow[d]&H^{i+j}_T(pt)\arrow[d]\\
H^i(X) \mathop\times H_{-j}(X)\arrow[r,"\langle\ \text{,}\ \rangle"]& H^{i+j}(pt).
\end{tikzcd}
\end{center}

\subsection{Spaces with affine paving}
Following \cite[Ex~1.9.1]{fulton:IT} (see also \cite{graham:positivity}) we say that 
a $T$-variety $X$ admits a {\em $T$-stable affine paving}
if it admits a filtration $X:=X_n \supset X_{n-1} \supset \ldots$ by closed $T$-stable subvarieties
such that each $X_i \setminus X_{i-1}$ is a finite disjoint union
of $T$-invariant varieties $U_{ij}$ isomorphic to affine spaces $\mathbb A^i$. 

\begin{lemma}[cf. \cite{graham:positivity}] 
\label{lemma:generate} Assume $X$ admits a $T$-stable affine paving, with cells $U_{ij}$. 
\begin{enumerate}[label=(\alph*)]
\item
The equivariant homology $H_*^T(X)$ is a free $H^*_T(pt)$-module with 
basis $\{[\overline{U_{ij}}]_T\}$.
\item
If $X$ is complete, the pairing from \cref{intPairing} is perfect, 
and so we may identify
$H^*_T(X) = \Hom_{H^*_T(pt)}(H_*^T(X), H^*_T(pt))$.
\end{enumerate}
\end{lemma}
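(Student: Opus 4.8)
The plan is to prove the two statements in \cref{lemma:generate} in order, relying on the standard long exact sequence machinery for equivariant Borel--Moore homology applied to the filtration $X = X_n \supset X_{n-1} \supset \cdots$.

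For part (a), I would argue by induction on the length of the filtration. The key input is that for an equivariant closed embedding $X_{i-1} \hookrightarrow X_i$ with open complement $X_i \setminus X_{i-1} = \bigsqcup_j U_{ij}$, there is a long exact sequence in equivariant Borel--Moore homology
\begin{align*}
\cdots \to H_k^T(X_{i-1}) \to H_k^T(X_i) \to H_k^T(X_i \setminus X_{i-1}) \to H_{k-1}^T(X_{i-1}) \to \cdots.
\end{align*}
First I would record that each cell $U_{ij} \cong \mathbb A^i$ is $T$-equivariantly contractible onto a fixed point, so by homotopy invariance $H_*^T(U_{ij})$ is a free rank-one $H^*_T(pt)$-module generated in the appropriate degree by the fundamental class $[U_{ij}]_T$. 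Hence the open stratum has homology that is a free $H^*_T(pt)$-module on the classes $\{[U_{ij}]_T\}$ concentrated in a single degree. The crucial consequence is a \emph{degree parity} observation: all generators of the open stratum sit in even Borel--Moore degree $2i$, so the connecting map $H_k^T(X_i\setminus X_{i-1}) \to H_{k-1}^T(X_{i-1})$ lands in the opposite parity and must vanish (using the inductive hypothesis that $H_*^T(X_{i-1})$ is concentrated in even degrees). This splits the long exact sequence into short exact sequences $0 \to H_*^T(X_{i-1}) \to H_*^T(X_i) \to H_*^T(X_i\setminus X_{i-1}) \to 0$, which are split because the quotient is a free $H^*_T(pt)$-module. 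Taking the closures $\overline{U_{ij}}$ as lifts of the generators $[U_{ij}]_T$ assembles the inductive basis, completing (a).

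For part (b), assuming $X$ is complete I would show the integration pairing $\langle\ ,\,\rangle$ of \cref{intPairing} is perfect, which gives the identification $H^*_T(X) = \Hom_{H^*_T(pt)}(H_*^T(X), H^*_T(pt))$. The natural approach is Poincar\'e duality in the equivariant setting together with the basis from (a): since the pairing sends $(\eta, c) \mapsto \int_X \eta \cap c$, I would exhibit a basis of $H^*_T(X)$ dual to $\{[\overline{U_{ij}}]_T\}$. The cleanest route is to reduce modulo $t$ (equivalently, tensor down to ordinary cohomology along $H^*_T(pt) \to \IQ$) and invoke the fact that the ordinary integration pairing on a space with an affine paving is perfect, a classical fact for cellular spaces. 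A graded Nakayama-type argument then promotes perfection over $\IQ$ to perfection over $H^*_T(pt)$: a map of free $H^*_T(pt)$-modules of the same finite rank in each degree that becomes an isomorphism after the base change $H^*_T(pt)\to\IQ$ is already an isomorphism, since $H^*_T(pt)$ is a polynomial ring with a single maximal graded ideal and the modules are finitely generated and graded.

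The main obstacle I expect is part (a), specifically verifying that the connecting homomorphisms in the long exact sequence vanish. This requires the degree/parity bookkeeping to be airtight: one must confirm that the equivariant fundamental classes of the affine cells genuinely occupy only even Borel--Moore degrees and that the inductive hypothesis feeds this parity forward correctly, so that the boundary maps are forced to zero by degree reasons alone. Once the sequence splits, freeness and the identification of the basis are routine, and part (b) then follows formally from the graded Nakayama lemma once the ordinary (non-equivariant) perfection is cited.
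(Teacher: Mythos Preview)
The paper does not give its own proof of this lemma: it is stated with the attribution ``cf.\ \cite{graham:positivity}'' and no argument is supplied. Your proposal therefore cannot be compared against a proof in the text, but it does supply a standard and essentially correct argument of the kind Graham's paper uses.

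A few remarks on your write-up. In part (a) your parity argument is sound, but your phrasing ``concentrated in a single degree'' is misleading: the equivariant Borel--Moore homology of $\mathbb A^i$ is a free rank-one $H^*_T(pt)$-module generated in degree $2i$, hence spread across all even degrees $\leq 2i$, not a single one. What you actually use (and what is true) is that it is concentrated in \emph{even} degrees, which together with the inductive hypothesis forces the odd-degree connecting maps to vanish.

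In part (b) the Nakayama reduction is the right idea, but you silently assume that $H^*_T(X)$ is itself a free $H^*_T(pt)$-module of the same rank as $H_*^T(X)$. This is true---it follows from equivariant formality, which holds here because the ordinary cohomology of a space with an affine paving is concentrated in even degrees, so the Serre spectral sequence of the Borel construction degenerates---but it should be stated rather than assumed. Once that is in place, the map $H^*_T(X)\to\Hom_{H^*_T(pt)}(H_*^T(X),H^*_T(pt))$ is a graded map of free modules of equal finite rank that becomes an isomorphism after applying $-\otimes_{H^*_T(pt)}\IQ$, and the graded Nakayama argument goes through.
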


\subsection{Chern classes and Euler classes}
\label{sec:eulerClass}
We will denote by $c_i^T(\ )$ the $i^{th}$ $T$-equivariant Chern class,
and by $e^T(\ )$ the equivariant Euler class of a $T$-equivariant vector bundle.
We say that a section $s$ of a vector bundle $\mathcal V\to X$ is regular if the codimension of the zero set of $s$ equals the rank of $\mathcal V$.
Recall that the torus $T$ acts on the space $H^0(X,\mathcal V)$ of sections of $\mathcal V$ via the formula $(z\cdot s)(x) = zs(z^{-1}x)$ for all $x\in X$ and $z\in T$.
The sections which are invariant under this action are precisely those that intertwine the action,
i.e., satisfy $s(zx)=zs(x)$ for all $x\in X$ and $z\in T$.

\begin{lemma}{\rm (cf. \cite[Lemma 2.2]{graham2020cominuscule})}
\label{lem:eqSec}
If $\mathcal L$ is a $T$-equivariant line bundle on a $T$-scheme $X$,
and $s$ is a $T$-invariant regular section of $\mathcal L$ with zero-scheme $Y$,
then 
\begin{equation*}
[Y]_T=c_1^T(\mathcal L)\cap[X]_T.
\end{equation*}
\end{lemma}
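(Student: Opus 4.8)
The plan is to reduce this equivariant identity to the well-known non-equivariant statement by means of the Borel mixing construction underlying the definition of equivariant (Borel--Moore) homology. Recall first the non-equivariant fact: if $\mathcal L$ is a line bundle on a scheme $X$ and $s$ is a regular section, i.e.\ a section whose zero scheme $Y$ has codimension equal to $\mathrm{rank}(\mathcal L)=1$, then $Y$ is an effective Cartier divisor on $X$ and its associated cycle satisfies $[Y]=c_1(\mathcal L)\cap[X]$. This is exactly the defining property of the first Chern class of a line bundle as a divisor operation, so nothing beyond the standard intersection theory recalled in \cref{sec:cohomology} is needed here.

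To pass to the equivariant setting, I would use finite-dimensional approximations $X_T=X\times^T U$ of the Borel construction, where $U$ is a $T$-invariant open subset of a representation $V$ on which $T$ acts freely, chosen so that $V\setminus U$ has codimension large enough to compute $H_*^T$ in the relevant degree (this is the Edidin--Graham formalism cited in the excerpt). By construction, $[X]_T$, $[Y]_T$, and $c_1^T(\mathcal L)$ are represented by the ordinary classes $[X_T]$, $[Y_T]$ with $Y_T=Y\times^T U$, and $c_1(\mathcal L_T)$ with $\mathcal L_T=\mathcal L\times^T U$, and the equivariant cap product is computed by the ordinary cap product on $X_T$. The crux is then to check that the $T$-invariant section $s$ descends to a regular section of $\mathcal L_T$ with zero scheme $Y_T$. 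The invariance condition $s(zx)=zs(x)$ recorded just before the statement is precisely what makes $s\times\mathrm{id}_U$ a $T$-equivariant section of the pullback of $\mathcal L$ to $X\times U$, hence descend to a genuine section $s_T$ of $\mathcal L_T$ whose zero scheme is $(Y\times U)/T=Y_T$. Since the quotient map $X\times U\to X_T$ is a $T$-torsor, codimensions of invariant subschemes are preserved, so $Y_T\subset X_T$ again has codimension $1=\mathrm{rank}(\mathcal L_T)$; thus $s_T$ is regular of the correct codimension.

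Applying the non-equivariant result to $X_T$ yields $[Y_T]=c_1(\mathcal L_T)\cap[X_T]$ in the ordinary homology of $X_T$, and translating this back through the definitions of the equivariant classes and the compatibility of the cap products gives $[Y]_T=c_1^T(\mathcal L)\cap[X]_T$, as desired. I expect the main obstacle to be the descent and regularity-preservation step, that is, verifying carefully that $s_T$ is a well-defined regular section with zero scheme exactly $Y_T$; a minor technical point is choosing the approximation $U$ uniformly so that all three classes and the identity lie in the stable range, which is routine in the cited framework.
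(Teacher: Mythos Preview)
The paper does not give its own proof of this lemma; it simply quotes the result from \cite[Lemma~2.2]{graham2020cominuscule}. Your proposal is a correct and standard argument---reduce to the non-equivariant identity $[Y]=c_1(\mathcal L)\cap[X]$ for the zero scheme of a regular section, and then lift to the equivariant setting via the Edidin--Graham finite-dimensional approximations to the Borel construction---and this is essentially the argument one finds in the cited reference. The descent step you flag (that a $T$-invariant section yields a well-defined regular section $s_T$ on $X_T$ with zero scheme $Y_T$) is indeed the only point requiring care, and your justification is adequate: invariance gives equivariance of $s\times\mathrm{id}_U$, free quotients preserve codimension, and the zero locus commutes with the mixing.
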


\begin{cor}
\label{cor:EulerClass}
If $\mathcal V$ is a $T$-equivariant vector bundle on a $T$-scheme $X$,
and $s$ is a $T$-invariant regular section of $\mathcal V$ with zero-scheme $Y$,
then $[Y]_T=e^T(\mathcal V)\cap[X]_T$.
\end{cor}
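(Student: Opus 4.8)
The plan is to reduce the statement about a general vector bundle $\mathcal V$ to the line-bundle case already established in \cref{lem:eqSec}. The key observation is that the Euler class is multiplicative in short exact sequences and, more to the point, that for a split bundle the Euler class factors as a product of first Chern classes of the line-bundle summands. So my first move would be to invoke the splitting principle: pass to a flag bundle $\pi\colon F\to X$ on which $\pi^*\mathcal V$ acquires a filtration with line-bundle quotients $\mathcal L_1,\dots,\mathcal L_r$ (where $r=\operatorname{rank}\mathcal V$), equivariantly for the $T$-action, so that $e^T(\pi^*\mathcal V)=\prod_{i=1}^r c_1^T(\mathcal L_i)$ and $\pi^*$ is injective on equivariant (co)homology. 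This lets me assume $\mathcal V$ itself is filtered with line-bundle quotients.

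\emph{Second}, I would argue that the $T$-invariant regular section $s$ can be handled by iterating \cref{lem:eqSec} through such a filtration. Concretely, if $0\to\mathcal V'\to\mathcal V\to\mathcal L\to 0$ is a short exact sequence of $T$-equivariant bundles with $\mathcal L$ a line bundle, the composite $s'\colon X\to\mathcal V\to\mathcal L$ is a $T$-invariant section of $\mathcal L$; its zero-scheme $Y'$ carries the class $c_1^T(\mathcal L)\cap[X]_T$ by \cref{lem:eqSec}, and on $Y'$ the section $s$ restricts to a $T$-invariant section of $\mathcal V'|_{Y'}$ whose zero-scheme is exactly $Y$. Applying the inductive hypothesis on $Y'$ gives $[Y]_T=e^T(\mathcal V'|_{Y'})\cap[Y']_T$, and combining with $[Y']_T=c_1^T(\mathcal L)\cap[X]_T$ together with the Whitney relation $e^T(\mathcal V)=c_1^T(\mathcal L)\cup e^T(\mathcal V')$ and the cap-cup compatibility $(a\cup b)\cap c=a\cap(b\cap c)$ recorded in \cref{sec:cohomology} yields the claim.

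\emph{Third}, I would assemble these pieces into a clean induction on the rank $r$. The base case $r=1$ is precisely \cref{lem:eqSec}. For the inductive step the dimension count is what makes everything fit: regularity of $s$ guarantees $\operatorname{codim} Y=r$, and one needs that the intermediate zero-schemes $Y'$ have the expected codimension so that the sections restricting to $\mathcal V'|_{Y'}$ remain regular and all the fundamental classes sit in the correct degree.

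The hard part, and the step deserving the most care, is this regularity bookkeeping along the filtration: verifying that the restricted section $s|_{Y'}$ of $\mathcal V'|_{Y'}$ is again regular (i.e.\ that $Y$ has codimension $r-1$ inside $Y'$) rather than merely checking the formula formally. One must ensure the splitting principle is genuinely available in the equivariant Borel-Moore setting with the affine-paving hypotheses in play, and that the Whitney-sum multiplicativity of $e^T$ holds with rational coefficients as used here. Once the codimensions are controlled so that each application of \cref{lem:eqSec} is legitimate, the telescoping of cap products through cap-cup compatibility delivers $[Y]_T=e^T(\mathcal V)\cap[X]_T$ directly.
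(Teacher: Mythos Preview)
The paper states this corollary without proof, treating it as an immediate consequence of \cref{lem:eqSec}; your induction on rank through a filtration with line-bundle quotients is the natural elaboration of what is left implicit. There is, however, a real gap in the induction as you set it up. Given $0\to\mathcal V'\to\mathcal V\to\mathcal L\to 0$, nothing guarantees that the projected section $s'$ of $\mathcal L$ is regular: if $s$ happens to factor through $\mathcal V'$ then $s'\equiv 0$ and $Y'=X$, so \cref{lem:eqSec} does not even apply at the first step. You flag the regularity of $s|_{Y'}$ as the delicate point, but the obstruction appears one step earlier, and no choice of filtration on the flag bundle forces the pulled-back section to meet the successive quotients transversally. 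This is not merely bookkeeping; the naive induction can genuinely stall.

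The clean fix bypasses the induction. In the Edidin--Graham model of equivariant Borel--Moore homology that the paper adopts, the identity $[Y]_T=e^T(\mathcal V)\cap[X]_T$ is checked on finite-dimensional approximations $X\times^T U$ to the Borel construction, where it becomes the ordinary statement that the zero-scheme of a regular section represents the top Chern class (e.g.\ \cite[\S 14.1]{fulton:IT}). This reduces the corollary to standard intersection theory in one step and is almost certainly what the paper intends by presenting it as a bare corollary. (Your aside about affine-paving hypotheses is a red herring here: the corollary makes no such assumption.)
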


For $\lambda$ a character of $T$, let 
$\underline{\IC}_{\lambda} = X\times\IC\to X$
denote the (geometrically trivial) equivariant line bundle,
with $T$-action given by $z(x,v)=(zx,\lambda(z)v)$ for all $z\in T$.
By a standard abuse of notation, we write $\lambda$ for the $T$-equivariant first Chern class of $\underline\IC_{\lambda}$.

\begin{cor}
\label{cor:twistedSection} 
Let $\mathcal V\to X$ be  a $T$-equivariant vector bundle. 
For a character $\lambda$ of $T$, 
let $s$ be a regular section of $\mathcal V$ that lies in the $\lambda$-weight space, i.e.,
$(z\cdot s)=\lambda(z) s $ for all $z\in T$.
The zero scheme $Z(s)$ of $s$ is $T$-invariant, and we have
\begin{equation*}
[Z(s)]_T=e^{T}\left(\mathcal V\otimes\underline\IC_{-\lambda}\right)\cap[X]_T.
\end{equation*}
If $\mathcal V$ admits a filtration with $T$-equivariant line bundle quotients $\{\mathcal L_i\}$,
we have
\begin{equation*}
[Z(s)]_T=\prod\left(c_{1}^{T}(\mathcal L_i)-\lambda\right)\cap[X]_T.
\end{equation*}
\end{cor}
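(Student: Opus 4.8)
The plan is to reduce everything to \cref{cor:EulerClass} by reinterpreting the weight-$\lambda$ section $s$ of $\mathcal V$ as a genuinely $T$-invariant section of the twisted bundle $\mathcal V\otimes\underline\IC_{-\lambda}$. First I would fix the nowhere-vanishing constant section $s_0\colon x\mapsto(x,1)$ of $\underline\IC_{-\lambda}$ and compute its weight: since the $T$-action on $\underline\IC_{-\lambda}$ is $z(x,v)=(zx,\lambda(z)^{-1}v)$, one has $(z\cdot s_0)(x)=z\cdot s_0(z^{-1}x)=(x,\lambda(z)^{-1})$, so $z\cdot s_0=\lambda(z)^{-1}s_0$. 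Consequently the tensor section $\tilde s:=s\otimes s_0$ satisfies $z\cdot\tilde s=(z\cdot s)\otimes(z\cdot s_0)=\lambda(z)\,s\otimes\lambda(z)^{-1}s_0=\tilde s$, i.e.\ $\tilde s$ is $T$-invariant. This is the conceptual heart of the argument, and the one point that needs care.

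Next I would observe that passing from $s$ to $\tilde s$ changes neither the zero scheme nor regularity. Since $s_0$ is a nowhere-vanishing trivialization, $Z(\tilde s)=Z(s)$ as schemes; and because tensoring by a line bundle preserves rank, $\operatorname{rank}(\mathcal V\otimes\underline\IC_{-\lambda})=\operatorname{rank}\mathcal V$, so the codimension of $Z(\tilde s)=Z(s)$ still equals the rank of the ambient bundle and $\tilde s$ is regular. The $T$-invariance of $Z(s)$ is then immediate, either because $Z(s)$ is the zero scheme of the invariant section $\tilde s$, or directly: if $s(x)=0$ then $s(zx)=\lambda(z)^{-1}(z\cdot s)(zx)=\lambda(z)^{-1}z\,s(x)=0$. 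Applying \cref{cor:EulerClass} to the $T$-invariant regular section $\tilde s$ of $\mathcal V\otimes\underline\IC_{-\lambda}$ yields
\[
[Z(s)]_T=[Z(\tilde s)]_T=e^T\!\left(\mathcal V\otimes\underline\IC_{-\lambda}\right)\cap[X]_T,
\]
which is the first formula.

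For the second formula I would use the multiplicativity of the equivariant Euler class along a filtration. If $\mathcal V$ carries a filtration with line-bundle quotients $\{\mathcal L_i\}$, then $\mathcal V\otimes\underline\IC_{-\lambda}$ inherits a filtration with quotients $\{\mathcal L_i\otimes\underline\IC_{-\lambda}\}$, and by the Whitney formula $e^T(\mathcal V\otimes\underline\IC_{-\lambda})=\prod_i e^T(\mathcal L_i\otimes\underline\IC_{-\lambda})$. For a line bundle the Euler class is the first Chern class, and using the convention $c_1^T(\underline\IC_{-\lambda})=-\lambda$ together with additivity of $c_1^T$ under tensor product gives $e^T(\mathcal L_i\otimes\underline\IC_{-\lambda})=c_1^T(\mathcal L_i)-\lambda$. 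Substituting into the first formula produces
\[
[Z(s)]_T=\prod_i\left(c_1^T(\mathcal L_i)-\lambda\right)\cap[X]_T.
\]
The only real obstacle is the weight bookkeeping in the first step—getting the sign of $\lambda$ right in the twist and confirming that the tensor section is honestly invariant; once \cref{cor:EulerClass} is in hand, the remainder is formal.
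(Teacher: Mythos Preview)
Your proposal is correct and follows essentially the same route as the paper: both twist $s$ by the nowhere-vanishing constant section of $\underline\IC_{-\lambda}$ to obtain a $T$-invariant regular section of $\mathcal V\otimes\underline\IC_{-\lambda}$ with the same zero scheme, then invoke \cref{cor:EulerClass} and the splitting principle. Your explicit weight computation for $s_0$ and your remark that regularity is preserved are a touch more detailed than the paper's version, but the argument is the same.
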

\begin{proof}
If $\lambda$ is non-trivial, the section $s$ is not invariant.
Observe however that the section $r=s\otimes 1$
of the vector bundle $\mathcal V\otimes\underline\IC_{-\lambda}$ is $T$-invariant, since
\begin{align*}
r(z x)&=s(z x)\otimes 1=\left(zz^{-1}s(zx)\right)\otimes 1\\
&= \left(z\left( \left(z^{-1}\cdot s\right)(x) \right)\right) \otimes 1 \\
&= \left(z\left( \lambda(z^{-1})s(x) \right)\right)\otimes 1\\
& = \left(\lambda(z^{-1})zs(x)\right)\otimes 1 \\
&=zs(x)\otimes \lambda(z^{-1})=z(s(x)\otimes 1)=zr(x).
\end{align*}
Further, $r$ has the same zero scheme as $s$, i.e., $Z(r)=Z(s)$.
Hence the first equality follows from \cref{cor:EulerClass}.

Suppose $\mathcal V$ admits a filtration by line bundles $\{\mathcal L_i\}$.
Then $\mathcal V\otimes\underline\IC_{-\lambda}$
admits a filtration by line bundles $\{\mathcal L_i\otimes\underline\IC_{-\lambda}\}$.
Applying the Whitney splitting principle, we have
\begin{align*}
e^T\left(\mathcal V\otimes\underline\IC_{-\lambda}\right)
=\prod\left(c_1^{T}\left(\mathcal L_i\otimes\underline\IC_{-\lambda}\right)\right)
=\prod\left(c_1^{T}(\mathcal L_i)-\lambda\right),
\end{align*}
from which the second equality follows.
\end{proof}


\section{Flag Manifolds}
\label{sec:preliminaries}

Fix a complex semisimple Lie group $G$,
opposite Borel subgroups $B, B^- \subset G$,
and let $T= B \cap B^-$ be the common maximal torus.
We will further assume that $G$ is simply connected;
this ensures that all line bundles on the flag manifold $G/B$ are $T$-equivariant.
Denote by $\Delta$ the system of simple positive roots associated to $(G,B,T)$,
by $\Phi^+ \subset \Phi$ the set of positive roots included in the set of all roots,
by $s_\alpha$ the simple reflections for $\alpha\in\Delta$,
and by $W$ the Weyl group of $G$.
Recall also the connection index $f$ of $\Delta$,
which equals the determinant of the Cartan matrix of $\Delta$.

For $I\subset \Delta$, we denote by $\Phi_I$, $\Phi^+_I$, $W_I$, and $f_I$ 
the set of roots, positive roots, Weyl group, and the connection index of $I$ respectively.

\subsection{Flag manifolds and Schubert varieties}
\label{sec:Schubert}
The flag manifold $\Fl$ is a projective algebraic manifold 
with a transitive action of $G$ given by left multiplication.
It has a stratification into finitely many $B$-orbits 
(resp. $B^-$-orbits) called the {\em Schubert cells}
$X_w^\circ:= BwB/B$ 
(resp. $X^{w,\circ}:= B^- wB/B $), 
i.e.,
\begin{equation}
\label{schubStrat}
\Fl = \bigsqcup_{w \in W} X_w^\circ =  \bigsqcup_{w \in W} X^{w,\circ} \/. 
\end{equation} 
The closures $X_w:=\overline{X_w^\circ}$ and $X^w:=\overline{X^{w,\circ}}$ are called {\em Schubert varieties}.
The \emph{Bruhat order} is a partial order on $W$ characterized by inclusions of Schubert varieties,
i.e., $X_v \subset X_w$ if and only if $v \le w$,
and $X^w\subset X^v$ if and only if $v\leq w$.
Following \cref{lemma:generate}, the fundamental classes
$\set{[X_v]_T}{v\leq w}$ (resp. $\set{[X^v]_T}{w\leq v}$) form a basis of $H_*^T(X_w)$ (resp. $H_*^T(X^w)$).

The cohomology classes $\sigma_v^T\in H_T^*(X)$ Poincar\'e dual to the $[X^v]_T$,
i.e. characterized by the equation $\sigma_v^T\cap[\Fl]_T=[X^v]_T$,
are called {\em Schubert classes}.
Following \cref{lemma:generate},
the Schubert classes  $\set{\sigma_v^T}{v\in W}$ form a basis of $H_T^*(G/B)$ as a module over $H_T^*(pt)$.

\subsection{Line bundles on the flag manifold}
\label{sec:lineBundles}
Recall that since $G$ is simply connected,
the character group $\X T$ of $T$ equals the weight lattice of $\Delta$.
For $\lambda\in\X T$,
let $\IC_\lambda$ be the one-dimensional $B$-representation on which $T$ acts via the character $\lambda$,
and the unipotent radical of $B$ acts trivially.
We will denote by $\mathcal L_\lambda$ the $T$-equivariant line bundle
\begin{align*}
\mathcal L_\lambda:=G\times^B\IC_{-\lambda}\to G/B,&&(g,v)\mapsto gB,
\end{align*}
with $T$-action given by $t\cdot (g,v)=(tg,v)$.

\subsection{Stability of Dynkin diagrams}
\label{sec:stab}

Let $(\,|\,)$ be a positive-definite $W$-invariant bilinear form on $\g$. 
For $\alpha\in\Delta$, the corresponding coroot is given by $\alpha^\vee=\frac{2(\alpha|\,\_)}{(\alpha|\alpha)}$.
Recall that the Cartan matrix is a square matrix with rows indexed by simple roots,
 with the $\alpha\beta^{th}$ entry given by $a_{\alpha\beta}=\left\langle\beta^\vee,\alpha\right\rangle=\frac{2(\alpha|\beta)}{(\alpha|\alpha)}$.

For $I\subset\Delta$, the Cartan matrix of $I$ is the submatrix of $\Delta$ spanned by  the rows and columns indexed by the roots in $I$.
Let $\Phi^\vee$ denote the set of coroots, and let $\Phi^\vee_I$ denote the subset of coroots corresponding on $I$.
The pairing $\langle\ ,\,\rangle$ on $\Phi_I^\vee\times\Phi_I$ is the restriction of the pairing $\Phi^\vee\times\Phi$
to $\Phi_I\subset\Phi$ and $\Phi^\vee_I\subset\Phi^\vee$.
We describe this by saying that the roots and coroots are stable for the inclusion of Dynkin diagrams.

Consider the elements $\varpi^I_\alpha\in\bigoplus\limits_{\alpha\in I}\IQ\alpha$ 
and $\varpi^{I\vee}_\alpha\in \bigoplus\limits_{\alpha\in I}\IQ\alpha^\vee$
given by the equations
\begin{align*}
&&\langle\varpi^{I\vee}_\alpha,\beta\rangle=\langle\beta^\vee,\varpi^I_\alpha\rangle =\delta_{\alpha\beta}&&\forall\beta\in I.
\end{align*}
Then $\varpi_\alpha:=\varpi_\alpha^\Delta$ is the fundamental weight dual to $\alpha^\vee$,
and $\varpi_\alpha^\vee:=\varpi_\alpha^{\Delta\vee}$ is the fundamental coweight dual to the root $\alpha$.
In general,
\begin{align*}
&&\varpi_\alpha^I\neq\varpi_\alpha&&
\text{and}&& 
\varpi^{I\vee}_\alpha\neq\varpi^\vee_\alpha.&&
\end{align*}
We express this fact by saying that the fundamental weights and coweights are \emph{not stable} for the inclusion of Dynkin diagrams.

\subsection{The height function}
\label{htFn}
Let
\begin{align*}
\rho_I=\frac 12\sum_{\alpha\in\Phi_I^+}\alpha=\sum_{\alpha\in I}\varpi_\alpha^I, && \rho_I^\vee=\frac 12\sum_{\alpha\in\Phi_I^+}\alpha^\vee=\sum_{\alpha\in I}\varpi_\alpha^{I\vee}.
\end{align*}
We set $\rho=\rho_\Delta$, and $\rho^\vee=\rho^\vee_\Delta$.
%
Following \cite[Ch~6, Prop 29]{bourbaki:Lie46},
we have $\langle \rho^\vee,\alpha\rangle=1$ for $\alpha\in\Delta$.
For $\lambda=\sum_{\alpha\in\Delta}a_\alpha\alpha$, 
we define the height of $\lambda$ to be
\begin{align*}
ht(\lambda)=\sum a_\alpha=\langle\rho^\vee,\lambda\rangle.
\end{align*}

Let $h=2\rho^\vee$, and let $\mathfrak s\subset\g$ be the Lie subalgebra spanned by $h$.
Observe that $h$ is in the coroot lattice,
and hence there exists a one-dimensional sub-torus $S\subset T$ with $Lie(S)=\mathfrak s$.
For any $\alpha,\beta\in\Delta$, we have $\left\langle h,\alpha\right\rangle=\left\langle h,\beta\right\rangle$,
and hence $\alpha|\mathfrak s=\beta|\mathfrak s$.
Let $t=\alpha|\mathfrak s$ for some $\alpha\in\Delta$.
The restriction map $\car^*\to\mathfrak s^*$ (dual to the inclusion $\mathfrak s\hookrightarrow\car$)
satisfies $\alpha\mapsto t$ for all $\alpha\in\Delta$,
and hence is given by
$\lambda\mapsto ht(\lambda)t=\left\langle\rho^\vee,\lambda\right\rangle t$.

\section{Hessenberg varieties and Poincar\'e duals}
\label{sec:Hessenberg}
In this section we define Hessenberg varieties
and compute the equivariant Poincar\'e duals (in $G/B$) of Hessenberg varieties corresponding to regular semisimple and regular nilpotent elements.
We also define the Peterson variety \Pet,
and recall from \cite{gms:peterson} some results on the equivariant (co)homology of \Pet.

\subsection{Hessenberg Varieties}
Let $\g:= Lie(G)$, $\bo=Lie(B)$, and $\car:=Lie(T)$.
A subspace $H\subset\g$ is called a \emph{Hessenberg space} if it is $B$-stable and if $\bo\subset H$.
Let
\begin{equation*}
H_0=\bo\oplus\bigoplus\limits_{\alpha\in\Delta}\g_{-\alpha}.
\end{equation*}
We say that a Hessenberg space $H$ is \emph{indecomposable} if $H_0\subset H$.
Recall that the vector bundle $G\times^B\g\to G/B$ is trivialized by the map
\begin{align*}
\mu_\g:G\times^B\g\to\g,&&(g,x)\mapsto Ad(g)x,
\end{align*}
i.e., we have an isomorphism $G\times^B\g\to G/B\times\g$ given by
$(g,x)\mapsto (gB,Ad(g)x)$.
Let $H$ be a Hessenberg space, 
and let $\mu_H$ denote the restriction of $\mu_{\mathfrak g}$ to the sub-bundle $G\times^BH\subset G\times^B\g$.
\begin{center}
\begin{tikzcd}
G\times^BH\arrow[dr,"\mu_H"]\arrow[rr,hook]&& G/B\times\g\arrow[ld,swap,"\mu_\g"]\\
&\g&
\end{tikzcd}
\end{center}
For $x\in\g$, the fibre $\mu_H^{-1}(x)$ (viewed as a subscheme of $G/B$) is called the Hessenberg scheme $\Hess(x,H)$.
If $H$ is indecomposable, $\Hess(x,H)$ is reduced and irreducible for all $x$,
see \cite[Thm 1.2]{abe.fujita.zeng}.
In this case, we call $\Hess(x,H)$ a \emph{Hessenberg variety}.

\emph{
For the rest of this article,
we assume without mention that the Hessenberg space $H$ is indecomposable.
}

For each positive  simple root $\alpha\in \Delta$,
choose a root vector $e_\alpha \in \mathfrak{g}_\alpha$.
Set 
\begin{align}\label{eq:hande}
&e= \sum_{\alpha \in\Delta} e_\alpha,& h=2\rho^\vee.&
\end{align}
The element $e$ is a regular nilpotent element in \bo,
see e.g. \cite{mcgovern.collingwood:nilpotent.orbits,kostant:principal}.
Recall from \cref{htFn} the sub-torus $S\subset T$ lifting the element $h\in\car$.
Following \cite[Ch~6, Prop 29]{bourbaki:Lie46}, we have $\langle\rho^\vee,\alpha\rangle=1$,
and hence
\begin{align*}
[h,e]=[h,\sum_{\alpha\in\Delta} e_\alpha]=\sum_{\alpha\in\Delta} \langle h,\alpha\rangle e_\alpha=2e.
\end{align*}
We see that the vector space $\IC e$ is $h$-stable, and hence also $S$-stable.
Consequently, the Hessenberg variety $\Hess(e,H)$ is $S$-stable.
Since $h\in\car$, the adjoint action of $T$ on $\mathfrak s=\IC h$ is trivial.
In particular, $\mathfrak s$ is $T$-stable, and hence so is $\Hess(h,H)$.
The following result was proved in type A by Anderson and Tymozcko in \cite{MR2609167}.

\begin{thm}
\label{HessClass}
For any indecomposable Hessenberg space $H$, we have
\begin{align*}
[\Hess(h,H)]_T&=\prod\left(c_1^T(\mathcal L_\alpha)\right)\cap[\Fl]_T,\\
[\Hess(e,H)]_S&=\prod\left(c_1^S(\mathcal L_\alpha)-t\right)\cap[\Fl]_S,
\end{align*}
where the product is over the set $\set{\alpha\in\Phi^+}{\g_{-\alpha}\not\subset H}$.
\end{thm}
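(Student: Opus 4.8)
The plan is to exhibit each Hessenberg variety as the zero scheme of an equivariant section of the vector bundle $\mathcal V:=G\times^B(\g/H)$ on $G/B$, and then to invoke \cref{cor:twistedSection}. Using the trivialization $\mu_\g\colon G\times^B\g\xrightarrow\sim G/B\times\g$, $(g,y)\mapsto(gB,Ad(g)y)$, the constant section $gB\mapsto(gB,x)$ of $G/B\times\g$ projects, under $G/B\times\g\twoheadrightarrow\mathcal V$, to a section $s_x$ of $\mathcal V$. By construction $s_x(gB)=0$ exactly when $x\in Ad(g)H$, i.e. $Ad(g^{-1})x\in H$; moreover this vanishing is scheme-theoretic, so the zero scheme $Z(s_x)$ coincides with $\Hess(x,H)=\mu_H^{-1}(x)$ as schemes.

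First I would identify the line-bundle subquotients of $\mathcal V$. As a $T$-module, $\g/H$ is the sum of the root spaces $\g_{-\alpha}$ over $\set{\alpha\in\phi^+}{\g_{-\alpha}\not\subset H}$, since $H$ is $T$-stable and contains $\bo$. As $B$ is connected solvable, Lie's theorem furnishes a filtration of the $B$-module $\g/H$ by one-dimensional quotients; each is a character of $B$, hence of $T$, and the weights that occur are exactly $\{-\alpha\}$ for $\alpha$ in this set. Applying $G\times^B(-)$, the bundle $\mathcal V$ inherits a filtration whose successive quotients are the line bundles $G\times^B\IC_{-\alpha}=\mathcal L_\alpha$, indexed by $\set{\alpha\in\phi^+}{\g_{-\alpha}\not\subset H}$; these are precisely the factors in the asserted product.

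Next I would pin down the equivariant weight of $s_x$. For $x=h\in\car$, the adjoint $T$-action fixes $h$, so $s_h$ is $T$-invariant of weight $0$. For $x=e$, the line $\IC e$ is $S$-stable (as established above via $[h,e]=2e$), and since each simple-root vector $e_\alpha$ has $T$-weight $\alpha$ restricting to $t$ on $\mathfrak s$, we have $Ad(z)e=t(z)\,e$ for $z\in S$; thus $s_e$ lies in the $t$-weight space for the $S$-action on sections. Regularity of these sections is the one point needing care: it amounts to $Z(s_x)$ having codimension equal to $\operatorname{rank}\mathcal V=\dim(\g/H)$. Since $H$ is indecomposable, $\Hess(x,H)$ is reduced and irreducible for every $x$ by \cite[Thm 1.2]{abe.fujita.zeng}, and for regular $x$ a standard dimension count gives $\dim\Hess(x,H)=\dim(G/B)-\dim(\g/H)$, so $s_x$ is regular.

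With regularity established, \cref{cor:twistedSection} applies verbatim. For $x=h$, taking the $T$-action and $\lambda=0$ (equivalently \cref{cor:EulerClass}) yields $[\Hess(h,H)]_T=\prod c_1^T(\mathcal L_\alpha)\cap[\Fl]_T$. For $x=e$, taking the $S$-action and $\lambda=t$ yields $[\Hess(e,H)]_S=\prod\left(c_1^S(\mathcal L_\alpha)-t\right)\cap[\Fl]_S$, both products running over $\set{\alpha\in\phi^+}{\g_{-\alpha}\not\subset H}$, as claimed. The identification of the subquotients and of the section's weight are routine; the substantive input is the dimension/regularity statement, which I import from the indecomposability of $H$.
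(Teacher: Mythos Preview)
Your proposal is correct and follows essentially the same route as the paper: both realize $\Hess(x,H)$ as the zero scheme of the section $s_x(gB)=(g,\overline{Ad(g^{-1})x})$ of $\mathcal V=G\times^B(\g/H)$, filter $\mathcal V$ with line-bundle quotients $\mathcal L_\alpha$ for $\alpha\in\phi^+$ with $\g_{-\alpha}\not\subset H$, identify the equivariant weight of $s_h$ (trivial) and $s_e$ (equal to $t$), and apply \cref{cor:twistedSection}. The only cosmetic difference is that the paper cites \cite[Prop.~3.6]{abe.fujita.zeng} directly for the scheme-theoretic identification $\Hess(x,H)=Z(s_x)$ and regularity, whereas you assemble this from \cite[Thm.~1.2]{abe.fujita.zeng} plus a dimension count.
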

\begin{proof}
Consider the vector bundle
$
\mathcal V=G\times^B(\g/H)\to G/B,
$
which admits a filtration with quotient bundles
$
\set{\mathcal L_\alpha}{\alpha\in\Phi^+,\g_{-\alpha}\not\subset H}.
$
For $x$ a regular element of \g,
let $s_x:G/B\to\mathcal V$ be the section of $\mathcal V$ given by $s_x(gB)=(g, Ad(g^{-1})x)$.
Following \cite[Prop 3.6]{abe.fujita.zeng},
we have $\Hess(x,H)=Z(s_x)$, the zero scheme of $s_x$.

Observe that $\mathcal V$ is a $T$-equivariant vector bundle,
and $s_h$ is a $T$-invariant section.
Therefore by \cref{lem:eqSec},
the fundamental class of $\Hess(h,H)=Z(s_h)$ is given by the first equality.
On the other hand, the section $s_e$ lies in the $t$-eigenspace of the $S$-action on $H^0(G/B,\mathcal V)$,
hence the second equality holds for the fundamental class of $\Hess(e,H)=Z(s_e)$ by \cref{cor:twistedSection}.
\end{proof}

\subsection{The Peterson variety}
\label{subsec:petersonVariety}
The Peterson variety is defined by
\begin{align}
\label{defn:Pet}
\Pet:= \Hess(e,H_0)\subset\Fl.
\end{align}
It is a subvariety of \Fl\ of dimension $rk(G)=|\Delta|$, singular in general.
Following \cref{HessClass}, the $S$-equivariant fundamental class of the Peterson variety in $H_*^S(G/B)$ is given by
\begin{align}
\label{PetClass}
[\Pet]_S=\prod\limits_{\alpha\in\Phi^+\backslash\Delta}\left(c_1^S\left(\mathcal L_\alpha\right)-t\right)\cap[\Fl]_S.
\end{align}

Let $\Pet_I$ denote the Peterson variety corresponding to the Dynkin diagram $I\subset\Delta$.
The following was proved in classical types by Tymoczko \cite[Thm~4.3]{tymoczko:paving}
and generalized to all Lie types by Precup \cite{precup},
see also \cite[Appendix A]{gms:peterson}.

\begin{prop}\label{prop:vectorSpace}
There exists a natural embedding $\Pet_I\subset\Pet$,
and a corresponding $S$-stable affine paving $\Pet=\bigsqcup\limits_{I\subset\Delta}\Pet_I^\circ$,
where $\Pet_I^\circ=\Pet_I\backslash\bigcup\limits_{J\subsetneq I}\Pet_J$.
\end{prop}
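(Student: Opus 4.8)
The plan is to construct the paving from the attracting cells of a generic one-parameter subgroup, identify the nonempty cells combinatorially with subsets $I\subseteq\Delta$, and match their closures with the subsystem Peterson varieties $\Pet_I$.

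\textbf{The embedding.} Let $G_I\subseteq G$ be the connected reductive subgroup generated by $T$ together with the root subgroups $U_\alpha$ for $\alpha\in\phi_I$, put $B_I=B\cap G_I$, and let $\mathfrak g_I=Lie(G_I)$, so that $G_I/B_I$ is its flag manifold and $gB_I\mapsto gB$ is a closed embedding $G_I/B_I\hookrightarrow\Fl$. The element $e_I=\sum_{\alpha\in\phi_I^+}e_\alpha$ is regular nilpotent in $\mathfrak g_I$, and a direct computation gives $H_0\cap\mathfrak g_I=H_{0,I}$, the indecomposable Hessenberg space of $I$, whence $\Pet_I=\Hess(e_I,H_{0,I})\subseteq G_I/B_I$. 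To see this embedding carries $\Pet_I$ into $\Pet$, write $e=e_I+e'$ with $e'=\sum_{\alpha\in\phi^+\setminus\phi_I^+}e_\alpha$. For $g\in G_I$ one has $Ad(g^{-1})e_I\in H_{0,I}\subseteq H_0$ by hypothesis, while each summand $\mathfrak g_\alpha$ of $e'$ satisfies $Ad(G_I)\mathfrak g_\alpha\subseteq\bigoplus_{\gamma\in\IZ\phi_I}\mathfrak g_{\alpha+\gamma}$; since $\alpha$ has a strictly positive coefficient on a simple root outside $I$ while $\gamma$ is supported on $I$, every such $\alpha+\gamma$ is a positive root, so $Ad(g^{-1})e'\in\bo\subseteq H_0$. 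Thus $Ad(g^{-1})e\in H_0$, giving the natural closed embedding $\Pet_I\hookrightarrow\Pet$.

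\textbf{The paving.} First I would record that, because $h=2\rho^\vee$ is regular and $S\subset T$, the $S$-fixed points of $\Pet$ are exactly the $T$-fixed points $wB$ lying in $\Pet$, namely those $w$ with $Ad(w^{-1})e\in H_0$; a root-by-root analysis shows these are in bijection with the subsets $I\subseteq\Delta$, and I write $x_I$ for the corresponding fixed point. Next, fix a generic cocharacter of $T$ refining the $S$-action, so that the associated Bialynicki-Birula decomposition of $\Fl$ recovers the Bruhat cells $C_w$, each an affine space, and $\Fl=\bigsqcup_w C_w$. Since $\Pet$ is closed and $S$-stable, $\Pet=\bigsqcup_w(\Pet\cap C_w)$, and each $S$-orbit flows into a single cell, so the pieces are $S$-stable. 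The technical heart, following Precup, is the claim that every nonempty intersection $\Pet\cap C_w$ is an affine space: writing $C_w$ in coordinates via the unipotent radical and expanding $Ad(g^{-1})e\in H_0$ as polynomial equations in those coordinates, one shows by a triangularity argument governed by the height grading that these equations solve for a subset of the coordinates in terms of the rest, exhibiting $\Pet\cap C_w\cong\mathbb A^{d_w}$; the nonempty ones are precisely those indexed by the fixed points $x_I$, with $d_w=|I|$.

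\textbf{Matching the strata.} Finally, setting $\Pet_I^\circ$ to be the cell containing $x_I$, standard Bialynicki-Birula closure relations give $\overline{\Pet_I^\circ}\subseteq\bigsqcup_J\Pet_J^\circ$ over the $J$ indexing cells in the closure; comparing dimensions with the embedding above (and checking $\Pet_I^\circ$ lies in the embedded copy of $\Pet_I$) identifies this closure with $\Pet_I$, so that $\Pet_I=\bigsqcup_{J\subseteq I}\Pet_J^\circ$ and hence $\Pet_I^\circ=\Pet_I\setminus\bigcup_{J\subsetneq I}\Pet_J$. Filtering by $\Pet_{\le k}=\bigcup_{|I|\le k}\Pet_I$ then yields the required chain of $S$-stable closed subvarieties whose successive differences are disjoint unions of affine cells. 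The main obstacle is the affineness claim for $\Pet\cap C_w$: establishing in a type-independent way that the Hessenberg equations in Bruhat coordinates are triangular with respect to a suitable grading, so that they cut out an affine space rather than a more complicated variety. The embedding, the fixed-point count, and the closure bookkeeping are then formal.
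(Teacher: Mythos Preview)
The paper does not supply its own proof of this proposition; it is stated with attribution to Tymoczko \cite{tymoczko:paving} (classical types), Precup \cite{precup} (all types), and the authors' earlier paper \cite[Appendix A]{gms:peterson}. Your sketch is precisely the strategy of those references: intersect $\Pet$ with the Schubert cells $C_w=BwB/B$ (which coincide with the Bialynicki--Birula cells for $S$, since $h=2\rho^\vee$ is dominant regular), observe that the nonempty intersections occur exactly at $w=w_I$ the longest element of $W_I$, and invoke Precup's result that each $\Pet\cap C_{w_I}$ is an affine space of dimension $|I|$. So your proposal is correct and matches the approach the paper cites.

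One small sharpening worth recording: your embedding argument in fact gives the cleaner equality $\Pet\cap X_{w_I}=\Pet_I$. Indeed, $X_{w_I}=P_I/B\cong G_I/B_I$, so every point of $\Pet\cap X_{w_I}$ is represented by some $g\in G_I$; your computation shows $Ad(g^{-1})e'\in\bo$ regardless, hence $Ad(g^{-1})e\in H_0$ is equivalent to $Ad(g^{-1})e_I\in H_0\cap\g_I=H_{0,I}$, giving both containments at once. This makes the ``matching the strata'' step genuinely formal rather than a dimension count: $\Pet_I^\circ=\Pet\cap C_{w_I}=\Pet_I\cap C_{w_I}$ is the open cell of $\Pet_I$, and $\Pet_I=\Pet\cap X_{w_I}=\bigsqcup_{J\subseteq I}\Pet_J^\circ$ follows immediately from the Bruhat decomposition of $X_{w_I}$. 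You are right that the only nontrivial ingredient is the affineness of $\Pet\cap C_{w_I}$, which is exactly Precup's theorem.
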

The subvarieties $\Pet^\circ_I$ (called \emph{Peterson cells}) are $S$-stable affine spaces.
The Peterson cell $\Pet^\circ_I$ has a unique $S$-stable point $w_I$,
the longest element in the Weyl subgroup $W_I$.
Following \cref{lemma:generate},
the fundamental classes $\set{[\Pet_I]_S}{I\subset\Delta}$ form a basis of $H_*^S(\Pet)$ over $H^*_S(pt)$.

\begin{prop}
{\rm(\cite[Thm 4.3]{gms:peterson})}
\label{prop:duality}
Consider the inclusion $i:\Pet\hookrightarrow\Fl$.
For each $I\subset\Delta$, fix a Coxeter element $v_I$ for $I$.
There exist positive integers $m(v_I)$ such that
\begin{align*}
\left\langle i^*\sigma_{v_I}^S,[\Pet_J]_S\right\rangle=m(v_I)\delta_{IJ}.
\end{align*}
In particular, 
$\set{i^*\sigma_{v_I}^S}{I\subset \Delta}$ is a basis for $H^*_S(\Pet)$.
Furthermore, the numbers $m(v_I)$ do not depend on the superset $\Delta$ containing $I$.
\end{prop}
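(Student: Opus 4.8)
The plan is to compute the entire pairing matrix $M_{IJ}:=\langle i^*\sigma_{v_I}^S,[\Pet_J]_S\rangle$ and show that it is diagonal with positive integer diagonal entries; the statement then follows from the perfectness of the pairing. The first step reduces each entry to an integral over the smaller Peterson variety. Writing $\iota_J\colon\Pet_J\hookrightarrow\Pet$ for the embedding of \cref{prop:vectorSpace} and using that the class $[\Pet_J]_S\in H_*^S(\Pet)$ is pushed forward under $\iota_{J*}$ from $\Pet_J$ itself, the projection formula (\cref{projectionFormula}) gives
\[
M_{IJ}=\int_{\Pet_J}(i\circ\iota_J)^*\sigma_{v_I}^S ,
\]
a class in $H^{2(|I|-|J|)}_S(pt)$ (here $\Pet_J$ is complete, being closed in $\Fl$). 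Since $\ell(v_I)=|I|$ and $H^*_S(pt)=\IQ[t]$ vanishes in negative degrees, we immediately get $M_{IJ}=0$ whenever $|I|<|J|$.

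The remaining off-diagonal vanishing I would obtain by restriction to the $S$-fixed points. Applying \cref{prop:vectorSpace} to $J$, the fixed locus of $\Pet_J$ is $\{w_K : K\subseteq J\}$; since $\Pet_J$ carries an affine paving, $H^*_S(\Pet_J)$ is a free $\IQ[t]$-module and the localization theorem makes the restriction $H^*_S(\Pet_J)\to\bigoplus_{K\subseteq J}H^*_S(w_K)$ injective. The restriction of $(i\circ\iota_J)^*\sigma_{v_I}^S$ at $w_K$ is $\sigma_{v_I}^S|_{w_K}$, and by Billey's localization formula \cite{billey:kostant} this is a polynomial, each of whose terms is a product of positive roots, that is nonzero precisely when $v_I\le w_K$. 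Because $v_I$ has support $I$ while $w_K$ is the longest element of $W_K$, one has $v_I\le w_K\iff I\subseteq K$; and since every term is a product of positive roots (each of positive height), nonvanishing is preserved under the specialization $\alpha\mapsto t$ to $S$. Hence if $I\not\subseteq J$ there is no $K\subseteq J$ with $I\subseteq K$, every fixed-point restriction vanishes, so $(i\circ\iota_J)^*\sigma_{v_I}^S=0$ and $M_{IJ}=0$. Together with the degree bound, $M_{IJ}\ne 0$ forces both $I\subseteq J$ and $|I|\ge|J|$, i.e. $I=J$; thus $M$ is diagonal and we set $m(v_I):=M_{II}\in H^0_S(pt)=\IQ$.

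To finish I must show each $m(v_I)$ is a positive integer. Nonvanishing is the clean part: if $m(v_I)=0$ then, the off-diagonal entries being already zero, $i^*\sigma_{v_I}^S$ pairs trivially with the whole homology basis $\{[\Pet_J]_S\}$, so $i^*\sigma_{v_I}^S=0$ by the perfectness of the pairing (\cref{lemma:generate}); but its restriction to $w_I$ is $\sigma_{v_I}^S|_{w_I}\ne 0$ (as $v_I\le w_I$), a contradiction. Being of degree zero, $m(v_I)$ agrees with its non-equivariant value, which by the projection formula is the intersection number $[\Pet_I]\cdot[X^{v_I}]$ in $\Fl$; the two classes have complementary dimension, so moving $X^{v_I}$ by a generic element of $G$ and applying Kleiman transversality realizes this number as a transverse count of points, hence a nonnegative integer, and therefore a positive integer. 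A diagonal matrix over $\IQ[t]$ with nonzero constant entries is invertible, so $\{i^*\sigma_{v_I}^S\}$ is dual up to scaling to $\{[\Pet_I]_S\}$ and is a basis of $H^*_S(\Pet)$.

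The crux is the support computation underlying the off-diagonal vanishing: the equivalence $\sigma_{v_I}^S|_{w_K}\ne 0\iff I\subseteq K$, which marries the Bruhat-order fact $v_I\le w_K\iff I\subseteq K$ (for a Coxeter element $v_I$ and a longest element $w_K$) with the positivity of Billey's formula that prevents cancellation when all simple roots are collapsed to $t$. For independence of $\Delta$, I would observe that $m(v_I)$ is controlled entirely by the single smooth fixed point $w_I\in\Pet_I$, hence by $\sigma_{v_I}^S|_{w_I}$ and the $S$-weights of $T_{w_I}\Pet_I$; by the root and coroot stability of \cref{sec:stab} these depend only on the subsystem $\phi_I$ and $W_I$, and the degree-zero ratio is insensitive to rescaling $t$. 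The precise value, which makes this independence manifest, is \cref{intro:thm:mult}.
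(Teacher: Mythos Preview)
The paper does not prove \cref{prop:duality}; it is quoted verbatim from \cite[Thm~4.3]{gms:peterson} with no argument supplied, so there is no in-paper proof to compare against. That said, your argument is correct and essentially reproduces the standard proof: the degree bound plus the localization/support computation (Billey's formula together with the Bruhat-order fact $v_I\le w_K\iff I\subseteq K$ for Coxeter $v_I$ and longest $w_K$) force the pairing matrix to be diagonal, and the intersection-theoretic reading of $m(v_I)=[\Pet_I]\cdot[X^{v_I}]$ in $\Fl$ gives integrality and nonnegativity.

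Two small points worth tightening. First, ``transverse count of points'' is slightly optimistic, since both $\Pet_I$ and $X^{v_I}$ can be singular; what Kleiman actually gives in this generality is that a generic $G$-translate meets $\Pet_I$ properly in dimension zero, whence the cohomological intersection number is a sum of positive local multiplicities and therefore a nonnegative integer---which, combined with your nonvanishing argument, is all you need. Second, your independence-of-$\Delta$ sketch via the single smooth fixed point $w_I$ is recoverable (only $w_I$ contributes because $\sigma_{v_I}^S|_{w_K}=0$ for $K\subsetneq I$), but a cleaner route is to note that $m(v_I)$, being of degree zero, equals the ordinary integral $\int_{\Pet_I}(i\circ\iota_I)^*\sigma_{v_I}$, and that $\Pet_I\subset X_{w_I}\cong G_I/B_I$ with $\sigma_{v_I}$ restricting to the intrinsic Schubert class of $G_I/B_I$; thus the whole computation lives in the Levi and never sees $\Delta$.
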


\Cref{stab1} deals with the stability of Schubert classes and their pullbacks to the Peterson variety.
For $I\subset\Delta$, let $G_I\subset G$ be the standard Levi subgroup,
and $B_I:=B\cap G_I$ the corresponding Borel subgroup of $G_I$.

\begin{prop}
{\rm(\cite[Thm 6.6]{gms:peterson})}
\label{stab1}
Consider the inclusions $\iota_I:\Pet_I\hookrightarrow\Pet$
and $i_I:\Pet_I\hookrightarrow G_I/B_I$.
For $w\in W$, let $p_w=i^*\sigma_w^S$.
For $w\in W_I$, let $p^I_w=i_I^*\sigma_w^S$.
Then $\iota_I^*p_w=p_w^I$.
\end{prop}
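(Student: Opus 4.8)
The plan is to factor both pullbacks through a single Schubert class on $G/B$ by means of a commutative square, thereby reducing the assertion to the stability of Schubert classes under a Levi embedding. Let $L_J\subset G$ be the Levi subgroup generated by $T$ and the root groups $U_{\pm\alpha}$, $\alpha\in J$, so that its flag manifold is canonically $G_J/B_J$; since $T\subset L_J$, the map $\kappa_J:G_J/B_J\hookrightarrow G/B$, $gB_J\mapsto gB$, is a $T$-equivariant (hence $S$-equivariant) closed embedding. The backbone of the argument is the commutative square
\[
\begin{tikzcd}
\Pet_J\arrow[r,"i_J"]\arrow[d,swap,"\iota_J"]&G_J/B_J\arrow[d,"\kappa_J"]\\
\Pet\arrow[r,swap,"i"]&G/B,
\end{tikzcd}
\]
that is, the identity $i\circ\iota_J=\kappa_J\circ i_J$.

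To justify the square I would verify that $\kappa_J(\Pet_J)\subseteq\Pet$ and identify the resulting map with the embedding $\iota_J$ of \cref{prop:vectorSpace}. Writing $H_0^J=\bo_J\oplus\bigoplus_{\alpha\in J}\g_{-\alpha}\subseteq H_0$ (with $\bo_J=Lie(B_J)$) and $e=e_J+\sum_{\alpha\in\Delta\setminus J}e_\alpha$, suppose $g\in G_J$ satisfies $Ad(g^{-1})e_J\in H_0^J$. Then $Ad(g^{-1})e_J\in H_0$, while for $\alpha\in\Delta\setminus J$ the element $Ad(g^{-1})e_\alpha$ lies in $\bigoplus_\gamma\g_\gamma$, where each $\gamma$ is obtained from $\alpha$ by adding an integer combination of roots in $J$; such a $\gamma$ has $\alpha$-coordinate equal to $1$ and is therefore a positive root, so $\g_\gamma\subset\bo\subset H_0$. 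Hence $Ad(g^{-1})e\in H_0$ and $gB\in\Pet$.

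Granting the square, functoriality of pullback gives
\[
\iota_J^*p_w=\iota_J^*i^*\sigma_w^S=(\kappa_J\circ i_J)^*\sigma_w^S=i_J^*\bigl(\kappa_J^*\sigma_w^S\bigr),
\]
so it suffices to prove the key lemma that $\kappa_J^*\sigma_w^S=\sigma_w^S$ for $w\in W_J$, where the left-hand class is that of $G/B$ and the right-hand class that of $G_J/B_J$. I would first establish this $T$-equivariantly by localization. At each $T$-fixed point $v\in W_J$ of $L_J/B_J$ one has $(\kappa_J^*\sigma_w^T)|_v=\sigma_w^T|_v$, and by Billey's formula for restrictions of Schubert classes to fixed points this localization, for $w,v\in W_J$, is a polynomial in the roots $s_{b_1}\cdots s_{b_{i-1}}(\alpha_{b_i})$ attached to a reduced word of $v$ in $W_J$; these all lie in $\phi_J$ and the expression is intrinsic to the $W_J$-action on $\car^*$, so it coincides with the corresponding localization of $\sigma_w^T$ computed in $G_J/B_J$ (stability of roots and coroots, \cref{sec:stab}). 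Injectivity of $T$-localization on the smooth projective variety $L_J/B_J$ yields $\kappa_J^*\sigma_w^T=\sigma_w^T$, and applying the map $H^*_T\to H^*_S$ gives the $S$-equivariant identity (a geometric alternative would instead identify the transverse preimage $\kappa_J^{-1}(X^w)$ with the opposite Schubert variety of $G_J/B_J$ indexed by $w$, at the cost of a separate multiplicity check).

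I expect the main obstacle to be the bookkeeping of the tori rather than the Schubert-class identity itself. The semisimple group $G_J$ does not contain $S$, so the $S$-action on $G_J/B_J$ exists only through the Levi $L_J$; moreover $\Pet_J$ carries both the restriction of the $S$-action on $\Pet$ and its own intrinsic action by the one-parameter subgroup $S_J$ with Lie algebra $\IC\cdot 2\rho_J^\vee$. These must be reconciled: $S$ and $S_J$ differ by the subgroup with Lie algebra $\IC\cdot 2(\rho^\vee-\rho_J^\vee)$, which lies in $Z(L_J)$ and hence acts trivially on $G_J/B_J$ because $\langle\rho^\vee-\rho_J^\vee,\beta\rangle=0$ for all $\beta\in J$; thus the two actions agree there. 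Finally, the shared parameter $t$ is unambiguous precisely because $\langle\rho^\vee,\beta\rangle=\langle\rho_J^\vee,\beta\rangle$ for $\beta\in\phi_J$ — the height of a root supported on $J$ is the same whether measured in $\Delta$ or in $J$ — which is what makes $\sigma_w^S$ on $G_J/B_J$ well defined and compatible with the restriction from $G/B$. Verifying these compatibilities is the crux of the proof.
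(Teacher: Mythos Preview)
The paper does not supply its own proof of this proposition; it is quoted directly from the cited reference \cite[Thm~6.6]{gms:peterson}. Your argument is correct and is the natural one: factor through the commutative square $i\circ\iota_J=\kappa_J\circ i_J$, reduce to the stability $\kappa_J^*\sigma_w^T=\sigma_w^T$ of Schubert classes under the Levi embedding, verify that identity by Billey's localization formula (using that a reduced word for $v\in W_J$ in $W_J$ is also reduced in $W$, so the two Billey sums literally coincide), and then pass from $T$ to $S$. Your torus bookkeeping is also right: the difference $2(\rho^\vee-\rho_J^\vee)$ pairs to zero with every $\beta\in J$ and hence is central in $\mathfrak l_J$, so $S$ and $S_J$ induce the same action on $G_J/B_J$ and the same parameter $t$. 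Note, incidentally, that the paper's proof of the next result (\cref{stability}) invokes exactly this Schubert-class stability as the map $f$ in its diagram, so your approach is entirely in the spirit of the surrounding text.
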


For $w\in W_I$, \cref{stab1} allows us to abuse notation and denote by $p_w$ both
the class $i^*\sigma_w^S$ in $H_S^*(\Pet)$ and its pullback $p_w^I$ in $H^*_S(\Pet_I)$.

\section{The Equivariant Giambelli formula and intersection multiplicities}
\label{sec:giambelli}
In this section, 
we first recall the relationship between $H^*(\Fl)$, $H^*(\Pet)$, and $H^*(\Perm)$,
which we then use to compute the multiplicities $m(v_I)$ in \cref{prop:duality},
and to develop an equivariant Giambelli formula for \Pet,
i.e., a formula expressing the pullback of Schubert classes as a polynomial in the divisor classes.


\subsection{Cohomology of regular Hessenberg varieties}
\label{sec:bc}
The goal of Section~\ref{sec:bc} is primarily expository.
Let $H$ be any (indecomposable) Hessenberg space, and let $h$ and $e$ be as in \cref{eq:hande}.
Klyachko \cite{klyachko85,klyachko95} and Tymoczko \cite{tymoczko2007permutation} have constructed an action of $W$ on $H^*(\Hess(h,H))$,
called the Tymoczko dot-action. 
Abe, Horiguchi, Masuda, Murai, Sato \cite{MR4116638} proved a commutative diagram relating the cohomologies of $G/B$ and certain Hessenberg varieties (see \cref{triangle}).
This relationship between the cohomologies of the regular nilpotent and regular semisimple Hessenberg varieties was earlier described in type A by Abe, Harada, Horiguchi, and Masuda in \cite{ahhm:cohomologyring}.
A proof of \cref{triangle} for $s$ regular, semisimple  in a Euclidean neighborhood of $e$ can be found in the works of Brosnan and Chow \cite{brosnan-chow} and
B\u alibanu and Crooks \cite{balibanu2020perverse}. 
We  show that the cohomology of the  regular nilpotent Hessenberg variety is isomorphic to the Weyl-invariant part of the cohomology of the regular semisimple Hessenberg variety, $\Hess(h, H)$.

Recall the map $\mu_H:G\times^BH\to\g$ given by $(g,x)\mapsto Ad(g)x$.
Let $\g^r$ denote the set of regular elements in \g, and let $H^r=H\cap \g^r$.
Following \cite[Sec 4]{balibanu2020perverse},
for any $x\in\g$,
there exists a Euclidean open neighbourhood $D_x$ of $x$,
such that the (non-equivariant) inclusion $\mu_H^{-1}(x)\hookrightarrow\mu_H^{-1}(D_x)$ induces an isomorphism
\begin{equation}
\label{DxIso}
H^*(\mu_H^{-1}(D_x))\overset\sim\longrightarrow H^*(\mu_H^{-1}(x)).
\end{equation}
Let $Z=\mu_H^{-1}(D_x)\cap(G\times^BH^r)$,
let $s$ be a regular semisimple element contained in $D_x$, 
and consider the commutative diagram
\begin{equation}
\label{bigDiag}
\begin{tikzcd}
G\times^BH\arrow[d]\arrow[r,hookleftarrow]&\mu_H^{-1}(D_x)\arrow[r,hookleftarrow]&Z\arrow[r,hookleftarrow]&\Hess(s,H)\\
G/B\arrow[r,hookleftarrow]&\Hess(x,H)\arrow[u,hook].
\end{tikzcd}
\end{equation}
Composing the induced pullback map $H^*(\mu_H^{-1}(D_x))\to H^*(\Hess(s,H))$
with the isomorphism \eqref{DxIso},
we obtain the so-called local invariant cycle map
\begin{align*}
\lambda_x:H^*(\Hess(x,H))\to H^*(\Hess(s,H))^W.
\end{align*}

Following \cite{balibanu2020perverse},
an application of the local invariant cycle theorem of Beilinson, Bernstein, and Deligne \cite{MR751966},
yields the following result. 

\begin{prop}
\label{bbd}
The map $\lambda_x:H^*(\Hess(x,H))\to H^*(\Hess(s,H))^W$ is surjective.
\end{prop}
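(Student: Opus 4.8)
The plan is to realize $\lambda_x$ as a local invariant cycle map for the proper family $\mu_H^{-1}(D_x)\to D_x$ and invoke the local invariant cycle theorem of Beilinson, Bernstein, and Deligne \cite{MR751966} (see also \cite{MR3752459}). First I would record the geometric inputs that make the theorem applicable. The total space $G\times^B H$ is the total space of a vector bundle over the smooth variety $\Fl$, hence is smooth, and so is its open subset $\mu_H^{-1}(D_x)$. The map $\mu_H$ is proper, being the composite of the closed embedding $G\times^B H\hookrightarrow \Fl\times\g$ with the projection to $\g$, which is proper because $\Fl$ is complete. Over the regular semisimple locus $\g^{rs}\subset\g$ the fibres of $\mu_H$ are the smooth varieties $\Hess(s,H)$ and $\mu_H$ restricts there to a smooth morphism (De Mari--Procesi--Shayman \cite{MR1043857}); since $\g^{rs}$ is dense and open, $\mu_H^{-1}(D_x)\to D_x$ is a proper map with smooth total space which is smooth over the dense open set $D_x\cap\g^{rs}$. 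Thus we are exactly in the setting of a degeneration whose central fibre $\Hess(x,H)$ may be singular but whose nearby fibres $\Hess(s,H)$, for $s\in D_x\cap\g^{rs}$, are smooth.

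Second, I would apply the local invariant cycle theorem to this family. Via the decomposition theorem for $R\mu_{H*}\IQ$, it yields that the restriction map $H^*(\mu_H^{-1}(D_x))\to H^*(\Hess(s,H))$ is surjective onto the subspace of classes fixed by the local monodromy, that is, the invariants of the action of $\pi_1(D_x\cap\g^{rs},s)$ on $H^*(\Hess(s,H))$. Composing with the isomorphism \eqref{DxIso}, the same surjectivity holds for $\lambda_x$; note also that any class in the image is automatically local-monodromy-invariant, since it extends over the neighbourhood $\mu_H^{-1}(D_x)$.

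Third, and this is the crux, I would identify these local monodromy invariants with the $W$-invariants for the Tymoczko dot-action. By Brosnan--Chow \cite{brosnan-chow} and B\u alibanu--Crooks \cite{balibanu2020perverse}, the monodromy representation of $\pi_1(\g^{rs})$ on $H^*(\Hess(s,H))$ factors through the Weyl group $W$ and coincides with the dot-action, and moreover the local monodromy about $x$ already realizes the full $W$-action, so that the local monodromy invariants coincide with $H^*(\Hess(s,H))^W$. Granting this, the surjectivity of $\lambda_x$ onto the local invariants becomes surjectivity onto $H^*(\Hess(s,H))^W$, which is the assertion of the proposition.

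The main obstacle is precisely this last identification. The local invariant cycle theorem by itself only controls invariants under the local monodromy group about $x$, which a priori could be a proper subgroup of $W$ with a strictly larger invariant subspace; in particular one cannot reduce to a one-parameter degeneration by slicing $D_x$ with a generic disk, since a single loop would only detect a cyclic monodromy subgroup and would yield too weak an invariance statement. It is therefore essential to work with the full neighbourhood $D_x$ and to use the cited computation that the Hessenberg family's local monodromy near $x$ exhausts the Weyl group action; everything else follows formally from smoothness of $G\times^B H$ and properness of $\mu_H$.
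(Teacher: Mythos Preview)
Your approach is the same as the paper's: the paper offers no proof beyond the sentence immediately preceding the proposition, which simply invokes the local invariant cycle theorem of Beilinson--Bernstein--Deligne within the framework of \cite{balibanu2020perverse} already set up for the construction of $\lambda_x$. You have correctly unpacked the hypotheses (smoothness of $G\times^B H$, properness of $\mu_H$, smoothness over $\g^{rs}$) and rightly flagged the identification of the local monodromy invariants with the $W$-invariants as the nontrivial input imported from \cite{brosnan-chow,balibanu2020perverse}.
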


\begin{cor}
Consider the inclusion $f:\Hess(s,H)\to G/B$.
The image of the pullback $f^*:H^*(\Fl)\to H^*(\Hess(s,H))$ is precisely $H^*(\Hess(s,H))^W$.
\end{cor}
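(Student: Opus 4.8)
The plan is to factor the restriction $f^*$ through the local invariant cycle map and then balance the surjectivity of \cref{bbd} against the surjectivity of restriction to a regular \emph{nilpotent} fibre. First I would record that, writing $g_x\colon\Hess(x,H)\hookrightarrow G/B$ for the inclusion and $g_x^*$ for its pullback, one has $f^*=\lambda_x\circ g_x^*$. This is a diagram chase in \eqref{bigDiag}: the bundle projection $\pi\colon G\times^BH\to G/B$ is a vector bundle, hence a homotopy equivalence, so $\pi^*$ is an isomorphism. Both $\Hess(s,H)=\mu_H^{-1}(s)$ and $\Hess(x,H)=\mu_H^{-1}(x)$ sit inside $\mu_H^{-1}(D_x)$ (as $s,x\in D_x$), so $f^*$ is the composite of $\pi^*$, the restriction $H^*(G\times^BH)\to H^*(\mu_H^{-1}(D_x))$, and the restriction $H^*(\mu_H^{-1}(D_x))\to H^*(\Hess(s,H))$, while $g_x^*$ is the composite of the first two with the isomorphism \eqref{DxIso}. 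Since $\lambda_x$ is by definition restriction to $\Hess(s,H)$ precomposed with the inverse of \eqref{DxIso}, the two copies of \eqref{DxIso} cancel and leave exactly $f^*=\lambda_x\circ g_x^*$.

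The inclusion $\mathrm{im}(f^*)\subseteq H^*(\Hess(s,H))^W$ is then immediate, since $\mathrm{im}(f^*)=\lambda_x(\mathrm{im}\,g_x^*)\subseteq\mathrm{im}(\lambda_x)$ and $\lambda_x$ takes values in $H^*(\Hess(s,H))^W$ by \cref{bbd}. Conceptually, a class pulled back from $G/B$ extends to a global class on the total space $G\times^BH$, and is therefore fixed by the monodromy, which is the $W$-action.

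For the reverse inclusion I would specialize to a regular nilpotent $x=e$. By \cref{bbd} the map $\lambda_e$ is surjective onto $H^*(\Hess(s,H))^W$, so it suffices to show that $g_e^*\colon H^*(G/B)\to H^*(\Hess(e,H))$ is surjective; granting this, $\mathrm{im}(f^*)=\lambda_e(\mathrm{im}\,g_e^*)=\lambda_e\bigl(H^*(\Hess(e,H))\bigr)=H^*(\Hess(s,H))^W$. Surjectivity of the restriction to the regular nilpotent fibre is the main obstacle, and precisely the place where one must take care not to argue circularly from \cref{intro:commTri}. I would supply it from the independently established fact that $H^*(\Hess(e,H))$ is generated in degree two by the restrictions of the divisor classes $\sigma_{s_\alpha}$, all of which lie in $\mathrm{im}(g_e^*)$.

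An alternative that sidesteps the regular nilpotent fibre is to apply Deligne's degeneration theorem directly to the smooth proper family $\mu_H\colon\mu_H^{-1}(\g^{\mathrm{rs}})\to\g^{\mathrm{rs}}$ over the locus $\g^{\mathrm{rs}}$ of regular semisimple elements: degeneration of the Leray spectral sequence forces the restriction to the fibre $\Hess(s,H)$ to surject onto the monodromy invariants, which equal $H^*(\Hess(s,H))^W$. The remaining point, that these invariant classes already come from $H^*(G/B)=H^*(G\times^BH)$, is again the crux, and is the content one borrows from \cite{brosnan-chow,balibanu2020perverse}.
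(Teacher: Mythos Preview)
Your factorization $f^*=\lambda_x\circ g_x^*$ is correct and is exactly the right framework, but you then work much harder than necessary. The paper's proof is a one-liner: take $x=0$. Since $0\in H$ trivially, $\Hess(0,H)=G/B$, so $g_0^*$ is the identity map and hence $\lambda_0=f^*$ on the nose. \cref{bbd} applied at $x=0$ then says directly that $f^*$ surjects onto $H^*(\Hess(s,H))^W$, and the containment $\mathrm{im}(f^*)\subset H^*(\Hess(s,H))^W$ is already built into the target of $\lambda_0$.

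By choosing $x=e$ instead, you saddle yourself with proving the surjectivity of $g_e^*\colon H^*(G/B)\to H^*(\Hess(e,H))$, which you rightly flag as the delicate point. Your proposed fix---generation of $H^*(\Hess(e,H))$ in degree two by restricted divisor classes---is a genuine theorem in the literature, but it is nontrivial, and within the logical flow of this paper it is only obtained \emph{after} this corollary, as a consequence of \eqref{commTri}. So your argument is correct if one is willing to import that external result, but it is not self-contained here, and the circularity concern you raise is real in the paper's internal order. The specialization $x=0$ sidesteps the issue entirely: it is the unique choice of $x$ for which $g_x^*$ is tautologically surjective. Your alternative via Deligne degeneration has the same defect---the ``remaining point'' you identify is exactly what $x=0$ handles for free.
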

\begin{proof}
We apply \cref{bbd} to $x=0$, in which case $\Hess(x,H)=G/B$,
and
$\lambda_0$ is precisely the pullback for the inclusion $\Hess(s,H)\hookrightarrow G/B$.
\end{proof}

\begin{prop}
\label{triangle}
We have a commutative diagram,
\begin{equation}
\label{commTri}
\begin{tikzcd}
&H^*(\Fl)\arrow[dl,swap,"i^*",twoheadrightarrow]\arrow[d,twoheadrightarrow,dashed]\arrow[dr,"j^*"]&\\
H^*(\Hess(e,H))\arrow[r,"\sim"]&H^*(\Hess(h,H))^W\arrow[hook,r]&H^*(\Hess(h,H)).
\end{tikzcd}
\end{equation}
\end{prop}
\begin{proof}
For $x=e$, there exists a regular semisimple element $s$ in a neighborhood of $e$ such that the map $\lambda_e:H^*(\Hess(e,H))\to H^*(\Hess(s,H))^W$ is an isomorphism,
cf. \cite[Prop. 4.7]{balibanu2020perverse}.
 Observe that $\Hess(e,H)\hookrightarrow Z$ in this case. Thus we have the following commutative diagram
\begin{center}
\begin{tikzcd}
&H^*(Z)\arrow[dl]\arrow[dr]&\\
H^*(\Hess(e,H))\arrow[rr,"\lambda_e"]&&H^*(\Hess(s,H))^W.
\end{tikzcd}
\end{center}

Following Diagram \eqref{bigDiag}, the pullbacks $i^*:H^*(G/B)\to H^*(\Hess(e,H))$ and 
 $j^*:H^*(G/B)\to H^*(\Hess(s,H))$ 
factor through the pullback $H^*(G/B)\to H^*(Z)$,
and hence we have a commutative diagram
\begin{equation}
\label{work2}
\begin{tikzcd}
&H^*(\Fl)\arrow[dl,swap,"i^*",twoheadrightarrow]\arrow[d,twoheadrightarrow,dashed]\arrow[dr,"j^*"]&\\
H^*(\Hess(e,H))\arrow[r,"\sim"]&H^*(\Hess(s,H))^W\arrow[hook,r]&H^*(\Hess(s,H)).
\end{tikzcd}
\end{equation}

We conjugate $s$ to a regular semisimple element $s'\in T$. Note that the corresponding Hessenberg varieties $\Hess(s, H)$ and $\Hess(s',H)$ are isomorphic varieties, but they have conjugate torus actions. Since  $s'$ and $h$ are both regular and semisimple in $T$, we invoke the isomorphism described in \cite{MR4116638} between $H_T^*(\Hess(s',H))$ and $H^*_T(\Hess(h,H))$ that descends to an isomorphism on the ordinary cohomology:
\begin{equation*}
\begin{tikzcd}[sep=small]
&&H^*_T(G/B)\arrow[dl]\arrow[dd]\arrow[dr]&&\\
&H^*_T(\Hess(s',H))\arrow[dd]\arrow[rr,"\hspace{-.3in}\sim"] &&H_T^*(\Hess(h,H))\arrow[dd]&\\
&&H^*(G/B)\arrow[dl]\arrow[dr]&&\\
H^*(\Hess(s, H))\arrow[r, equal]&H^*(\Hess(s',H))\arrow[rr,"\sim"]&&H^*(\Hess(h,H)).&
\end{tikzcd}
\end{equation*}
Here the vertical maps are each the surjective maps naturally induced by forgetting the $T$ action. 
The bottom horizontal isomorphism is invariant with respect to the $W$-action on the cohomology, inducing an isomorphism between $H^*(\Hess(s,H))^W$ and $H^*(\Hess(h,H))^W$. The commuting diagram \eqref{commTri} then follows by the commuting diagram~\eqref{work2} together with these isomorphisms.
\end{proof}

\subsection{Giambelli and Multiplicity Formulae}

 We now specialize to the case  $\Hess(e,H)=\Pet$ and $\Hess(h,H)=\Perm$.
 There are two parts to the proof of the equivariant Giambelli formula, namely showing that there is a single non-zero coefficient, and computing this coefficient.
The calculation of the non-zero coefficient is deduced from the analogous result in \cite{klyachko85} for the ordinary cohomology of the permutahedral variety.
The claim that there a single non-zero coefficient is essentially from \cite{drellich:monk}, where the result is proved only for specific choices of $v_K$, but the proofs follow almost verbatim.

\begin{lemma}[Ordinary Giambelli Formula]
\label{klyachkoFormula}
Let $v_I$ be a Coxeter element for $I\subset\Delta$,
and let $R(v_I)$ be the number of reduced words for $v_I$.
We have
\begin{equation}
i^*\sigma_{v_I}=\frac{R(v_I)}{|I|!}\prod_{\alpha\in I}i^*\sigma_\alpha.
\end{equation}
\end{lemma}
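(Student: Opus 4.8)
The plan is to reduce the statement for the Peterson variety to the corresponding statement for the permutahedral variety, where Klyachko's formula \cref{intro:eq:klyachko} already provides the identity we need. The diagram \cref{commTri} is exactly the tool for this: taking $H = H_0$, we have $\Pet = \Hess(e,H_0)$ and $\Perm = \Hess(h,H_0)$, and the proposition establishes that $i^*: H^*(\Fl) \to H^*(\Pet)$ and $j^*: H^*(\Fl) \to H^*(\Perm)$ have the same kernel. As the excerpt observes immediately after \cref{commTri}, this means any polynomial relation among the pullbacks $j^*\sigma_w$ holds verbatim among the pullbacks $i^*\sigma_w$.

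Concretely, I would first specialize Klyachko's Giambelli formula to the Coxeter element $v_I$. Since $v_I$ is a Coxeter element for $I$, each simple reflection $s_\alpha$ with $\alpha \in I$ appears exactly once in every reduced word for $v_I$, and $\ell(v_I) = |I|$. Therefore, for each reduced word $\underline{v_I} \in \mathcal R(v_I)$, the product $\prod_{s_\alpha \in \underline{v_I}} j^*\sigma_{s_\alpha}$ equals $\prod_{\alpha \in I} j^*\sigma_\alpha$, independent of which reduced word is chosen. Klyachko's formula then reads
\begin{equation*}
j^*\sigma_{v_I} = \frac{1}{|I|!} \sum_{\underline{v_I} \in \mathcal R(v_I)} \prod_{\alpha \in I} j^*\sigma_\alpha = \frac{R(v_I)}{|I|!} \prod_{\alpha \in I} j^*\sigma_\alpha,
\end{equation*}
since there are exactly $R(v_I)$ reduced words and each contributes the same product.

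It remains to transport this identity from $H^*(\Perm)$ to $H^*(\Pet)$. The relation above can be rewritten as $j^*\left(\sigma_{v_I} - \frac{R(v_I)}{|I|!}\prod_{\alpha \in I}\sigma_\alpha\right) = 0$, so the cohomology class $\sigma_{v_I} - \frac{R(v_I)}{|I|!}\prod_{\alpha \in I}\sigma_\alpha$ lies in $\ker j^*$. Because $\ker i^* = \ker j^*$, applying $i^*$ kills the same class, which gives $i^*\sigma_{v_I} = \frac{R(v_I)}{|I|!}\prod_{\alpha \in I} i^*\sigma_\alpha$, using that $i^*$ is a ring homomorphism so that $i^*\prod_{\alpha \in I}\sigma_\alpha = \prod_{\alpha \in I} i^*\sigma_\alpha$. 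This is the desired ordinary Giambelli formula.

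The substantive content here is not the elementary bookkeeping but the equality $\ker i^* = \ker j^*$, which rests on the identification of $H^*(\Pet)$ with $H^*(\Perm)^W$ via the local invariant cycle machinery of B\u alibanu and Crooks encoded in \cref{commTri}; this has already been established in the excerpt, so I may invoke it directly. The only point requiring care is confirming that $i^*$ (and $j^*$) are genuine ring homomorphisms on ordinary cohomology, so that the product of Schubert classes pulls back to the product of their images — this is standard for pullbacks along the inclusions $\Pet \hookrightarrow \Fl$ and $\Perm \hookrightarrow \Fl$, so I expect no obstacle there.
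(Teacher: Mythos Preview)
Your argument is correct and matches the paper's proof essentially line for line: specialize Klyachko's formula \cref{intro:eq:klyachko} to a Coxeter element, observe that every reduced word contributes the same product $\prod_{\alpha\in I}j^*\sigma_\alpha$, and then invoke \cref{commTri} (equivalently, $\ker i^*=\ker j^*$) to transport the relation to $H^*(\Pet)$. The only difference is that you spell out the kernel argument explicitly, whereas the paper simply cites \cref{commTri}.
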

\begin{proof}
Let $\mathcal R(v_I)$ denote the set of reduced words for $v_I$.
Following \cite{klyachko85,klyachko95} (see also \cite[Thm 8.1]{nadeau.tewari}),
we have
\begin{align*}
j^*\sigma_{v_I}=\frac{1}{\ell(v_I)!}\sum_{\underline v\in\mathcal R(v_I)}\prod_{s_\alpha\in\underline v}j^*\sigma_\alpha,
\end{align*}
where $\ell(\_)$ denotes the length function on $W$.
Since $v_I$ is a Coxeter word for $I$,
we have $\ell(v_I)=|I|$.
Further,
every reduced word $\underline v\in\mathcal R(v_I)$ contains each simple reflection $\set{s_\alpha}{\alpha\in I}$ exactly once.
Hence we have,
\begin{align}
\label{work4}
\frac{1}{\ell(v_I)!}\sum_{\underline v\in\mathcal R(v_I)}\prod_{s_\alpha\in\underline v}j^*\sigma_\alpha
=\frac{R(v_I)}{|I|!}\prod_{\alpha\in I}j^*\sigma_\alpha.
\end{align}
Finally, it follows from \cref{commTri} 
that the pullbacks $i^*:H^*(G/B)\to H^*(\Pet)$ and $j^*:H^*(G/B)\to H^*(\Perm)$ have the same kernel,
and hence \emph{any relation amongst the classes $j^*\sigma_w$ also holds amongst the classes $i^*\sigma_w$}.
The claim now follows from \cref{work4}.
\end{proof}

Recall that we denote by $p_w$ the pullback class $i^*\sigma_w^S$,
and by $p_\alpha$ the pullback class $i^*\sigma_{\alpha}^S$.
For convenience, we write the equivariant class
\begin{align*}
\Omega_I:=\prod_{\alpha\in I}p_\alpha = \prod_{\alpha\in I}i^*(\sigma^S_{\alpha})
\end{align*}
for any $I\subset\Delta$, where $i^*: H_S^*(G/B)\rightarrow H^*_S(\Pet)$ is the pullback map in equivariant cohomology induced by the inclusion $i:\Pet\to G/B$.

\begin{thm}[Equivariant Giambelli formula]
\label{Giambelli}
Let $v_I$ be a Coxeter element for $I\subset\Delta$,
and let $R(v_I)$ be the number of reduced words for $v_I$.
We have
\begin{align*}
p_{v_I}=\frac{R(v_I)}{|I|!} \Omega_I.
\end{align*}
\end{thm}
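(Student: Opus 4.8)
The plan is to compare the two classes $p_{v_I}$ and $\tfrac{R(v_I)}{|I|!}\Omega_I$, both of which lie in $H^{2|I|}_S(\Pet)$, by pairing them against the fundamental class basis. The variety $\Pet$ carries an $S$-stable affine paving (\cref{prop:vectorSpace}) and is complete, so by \cref{lemma:generate} the integration pairing of \cref{intPairing} against $\set{[\Pet_J]_S}{J\subset\Delta}$ is perfect; hence it suffices to establish
\begin{equation*}
\left\langle p_{v_I},[\Pet_J]_S\right\rangle=\frac{R(v_I)}{|I|!}\left\langle\Omega_I,[\Pet_J]_S\right\rangle\qquad\text{for all }J\subset\Delta.
\end{equation*}
The left-hand side is $m(v_I)\delta_{IJ}$ by \cref{prop:duality}, so the whole problem becomes the evaluation of $\left\langle\Omega_I,[\Pet_J]_S\right\rangle$.

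The key input I would isolate is the vanishing $\iota_J^*p_\alpha=0$ in $H^*_S(\Pet_J)$ for every $\alpha\in\Delta\setminus J$, where $\iota_J:\Pet_J\hookrightarrow\Pet$ is the inclusion from \cref{stab1}. To prove it I would localize at the $S$-fixed points of $\Pet_J$, which by \cref{prop:vectorSpace} are precisely the longest elements $\set{w_K}{K\subset J}$ of the parabolic subgroups $W_K$, and use that restriction to the fixed locus is injective (exactly as in the proof of \cref{stability}). Since the composite $i\circ\iota_J$ is the inclusion $\Pet_J\hookrightarrow\Fl$, we have $(\iota_J^*p_\alpha)\big|_{w_K}=\sigma^S_\alpha\big|_{w_K}$, the image under $\car^*\to\mathfrak s^*$ of the $T$-equivariant restriction $\sigma^T_{s_\alpha}\big|_{w_K}$. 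As $w_K\in W_K$ has a reduced word involving only the simple reflections $s_\beta$ with $\beta\in K$, and $\alpha\notin K$, Billey's formula \cite{billey:kostant} gives $\sigma^T_{s_\alpha}\big|_{w_K}=0$, hence $\sigma^S_\alpha\big|_{w_K}=0$. Vanishing at every fixed point yields $\iota_J^*p_\alpha=0$.

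With this lemma I would compute $\left\langle\Omega_I,[\Pet_J]_S\right\rangle$ by the projection formula \cref{projectionFormula}, which identifies it with $\int_{\Pet_J}\iota_J^*\Omega_I=\int_{\Pet_J}\prod_{\alpha\in I}\iota_J^*p_\alpha$, and then split into three cases. If $I\not\subset J$, some factor $\iota_J^*p_\alpha$ with $\alpha\in I\setminus J$ vanishes, so the pairing is $0$, matching $m(v_I)\delta_{IJ}=0$. If $I\subsetneq J$, the pairing lies in $H^{2|I|-2|J|}_S(pt)=0$, again matching. Finally, if $I=J$ the pairing has degree $0$, so by the compatibility of the equivariant and ordinary pairings it equals the ordinary pairing $\left\langle\prod_{\alpha\in I}i^*\sigma_\alpha,[\Pet_I]\right\rangle$; the ordinary Giambelli formula \cref{klyachkoFormula} rewrites this as $\tfrac{|I|!}{R(v_I)}\left\langle i^*\sigma_{v_I},[\Pet_I]\right\rangle=\tfrac{|I|!}{R(v_I)}m(v_I)$, so that $\tfrac{R(v_I)}{|I|!}\left\langle\Omega_I,[\Pet_I]_S\right\rangle=m(v_I)=\left\langle p_{v_I},[\Pet_I]_S\right\rangle$.

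I expect the case $|J|<|I|$ to be the main obstacle: there a degree count is useless, since $\left\langle\Omega_I,[\Pet_J]_S\right\rangle$ a priori lives in $H^{2(|I|-|J|)}_S(pt)=\IQ\,t^{\,|I|-|J|}$ and could be a nonzero multiple of $t^{|I|-|J|}$ that is invisible after passing to ordinary cohomology. The vanishing $\iota_J^*p_\alpha=0$ for $\alpha\notin J$ is exactly the tool that kills these lower-order-in-$t$ contributions, so establishing it (through the fixed-point restriction and the absence of $s_\alpha$ from $W_K$) is the crux of the proof.
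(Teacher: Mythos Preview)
Your argument is correct. Both your proof and the paper's rely on the same two inputs --- the ordinary Giambelli formula (\cref{klyachkoFormula}) and the vanishing $\iota_J^*p_\alpha=0$ for $\alpha\notin J$ --- but they are organized dually. The paper first reduces to $I=\Delta$ via \cref{stability}, then shows that $\set{\Omega_J}{J\subset\Delta}$ is a basis of $H^*_S(\Pet)\otimes\IQ(t)$, expands $p_{v_\Delta}=\sum_J c^J\Omega_J$, reads off the top coefficient $c^\Delta$ from the ordinary specialization, and kills the lower $c^J$ by applying $\iota_J^*$ and invoking linear independence of the $\Omega_K$ in $H^*_S(\Pet_J)$. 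You instead work with the homology basis $\set{[\Pet_J]_S}{J\subset\Delta}$, which is known a priori from the affine paving, and use the perfect pairing; the projection formula converts each pairing into an integral over $\Pet_J$, where the same vanishing lemma and a degree count dispose of the off-diagonal cases. Your route is slightly more economical in that it does not require establishing that $\{\Omega_J\}$ is a basis beforehand; the paper's route has the side benefit of making that basis statement explicit, which it uses again later. Your fixed-point argument for $\iota_J^*p_\alpha=0$ via Billey's formula is a clean self-contained replacement for the citation to \cite[Thm.~6.6(b)]{gms:peterson}.
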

\begin{proof}
Observe that the restriction to ordinary cohomology $H^*_S(\Pet)\to H^*(\Pet)$
is given by 
\begin{align*}
\Omega_J\mapsto \prod_{\alpha\in J}i^*\sigma_\alpha.
\end{align*}
Consider $J\subseteq \Delta$.
Following \cref{klyachkoFormula,prop:duality} 
$$\set{\prod_{\alpha\in K}i^*\sigma_\alpha}{K\subseteq J}$$ 
is a basis of $H^*(\Pet_J)$.
Since $H^*_S(\Pet_J)$ (\cref{lemma:generate}) is a free module over $\IQ[t]$, by equivariant formality, $\set{\Omega_K}{K\subseteq J}$ is a basis of $H^*_S(\Pet_J)$ over $\IQ[t]$.

Consider now the basis expansion 
\begin{align}
\label{pvExpansion}
p_{v_I}=i^*\sigma_{v_I }^{S}=\sum_{J\subseteq\Delta} c_{v_I }^J\Omega_J
\end{align}
in $H^*_S(\Pet)$,
where the $c_{v_I}^J\in\IQ[t]$ are $t$-monomials of degree $|I|-|J|$.

First consider $J$ for which $I\subsetneq J$.
Then $|I|-|J|<0$, and hence $c_{v_I}^J= 0$.

Next, consider the case $J=I$.
The coefficient $c_{v_I}^I$ has degree $0$, hence can be obtained from the specialization $H^*_S(\Pet)\to H^*(\Pet)$.
Applying \cref{klyachkoFormula}, 
\begin{align*}
c_{v_I }^I=\frac{R(v_I)}{|I|!}.
\end{align*}

Finally, suppose $J$ satisfies $I\not\subseteq J$.
Consider the localization of $\sigma_{v_I}$ to the fixed point $w$.
Then $\sigma_{v_I}|_w\neq0$ implies $v_I\leq w$ in the Bruhat order, see \cite{billey:kostant}. 
In particular, for all fixed points $w\in \Pet_J$, we have $\sigma_{v_I}|_w=0$. 
By injectivity of localization (see \cite[Thm. 3.1]{drellich:monk}), the pull-back of $\sigma_{v_I}$ to $\Pet_J$ is zero.
Recall that the pull-back $H^*_S(\Pet)\to H^*_S(\Pet_J)$ sends $\Omega_K$ to $\Omega_K$ if $K\subset J$, and to $0$ otherwise.
Applying the pull-back, \Cref{pvExpansion} yields
$$
0 = \sum_{K\subseteq J} c_{v_I }^K \Omega_K
$$
in $H^*_S(\Pet_J)$.
Since the set $\set{\Omega_K}{K\subset J}$ forms a basis of $H^*_S(\Pet_J)$, we deduce that $c_{v_I}^K=0$ for all $K\subset J$;
in particular, $c_{v_I}^J=0$.
\end{proof}

\begin{lemma}
\label{lem:mult}
We have 
$
\left\langle\Omega_\Delta,[\Pet]_S\right\rangle=\frac{|W|}{f_\Delta},
$
where $f_\Delta$ is the connection index of $\Delta$.
\end{lemma}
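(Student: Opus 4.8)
The plan is to transport this equivariant pairing to an ordinary intersection number on the permutahedral variety $\Perm$, where it coincides with an integral already evaluated by Klyachko. First I would observe that the pairing is forced into degree zero: $\Omega_\Delta=\prod_{\alpha\in\Delta}i^*\sigma_\alpha^S$ lies in $H^{2|\Delta|}_S(\Pet)$, while $[\Pet]_S\in H_{2|\Delta|}^S(\Pet)$ since $\dim_\IC\Pet=|\Delta|$, so by the definition of the integration pairing \cref{intPairing} the value lies in $H^0_S(pt)=\IQ$ and is a genuine constant. The forgetful map $H^0_S(pt)\to H^0(pt)$ is the identity on $\IQ$, so the compatibility of the equivariant and ordinary integration pairings (the commutative square in \cref{sec:pairing}) lets me replace the whole computation by its image in ordinary cohomology. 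Writing $\bar{(\ )}$ for the restriction to ordinary (co)homology, I obtain $\langle\Omega_\Delta,[\Pet]_S\rangle=\big\langle\prod_{\alpha\in\Delta}i^*\sigma_\alpha,\,[\Pet]\big\rangle$, where now $\sigma_\alpha$ are ordinary Schubert classes and $[\Pet]\in H_*(\Pet)$ is the ordinary fundamental class.

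Next I would push the computation forward to $G/B$. Since $i^*$ is a ring map, $\prod_\alpha i^*\sigma_\alpha=i^*\big(\prod_\alpha\sigma_\alpha\big)$, and $i\colon\Pet\hookrightarrow G/B$ is a closed embedding, so the projection formula \cref{projectionFormula} gives $\langle\Omega_\Delta,[\Pet]_S\rangle=\int_{G/B}\big(\prod_{\alpha\in\Delta}\sigma_\alpha\big)\cap[\Pet]$, where $[\Pet]=i_*[\Pet]$ now denotes the fundamental class in $H_*(G/B)$. The key geometric input is that this class agrees with $[\Perm]$: specializing \cref{HessClass} to ordinary cohomology (setting $t=0$ and forgetting the torus actions) shows that both $[\Pet]$ and $[\Perm]$ equal $\prod_{\beta\in\phi^+\setminus\Delta}c_1(\mathcal L_\beta)\cap[\Fl]$ in $H_*(G/B)$. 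Replacing $[\Pet]$ by $[\Perm]$ and applying the projection formula again along $j\colon\Perm\hookrightarrow G/B$ yields $\langle\Omega_\Delta,[\Pet]_S\rangle=\int_\Perm\prod_{\alpha\in\Delta}j^*\sigma_\alpha=\int_\Perm\prod_{\alpha\in\Delta}\sigma_{s_\alpha}$.

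Finally I would invoke Klyachko's evaluation of this last toric intersection number: by \cite{klyachko85,klyachko95} (see also \cite{nadeau.tewari}) one has $\int_\Perm\prod_{\alpha\in\Delta}\sigma_{s_\alpha}=|W|/f_\Delta$, which is the asserted value. The routine identifications—degree bookkeeping, the two applications of the projection formula, and the fact that the closed embeddings contribute no multiplicity since both Hessenberg varieties are reduced and irreducible—are straightforward. The genuine content, and the step I expect to be the main obstacle, is the equality $\int_\Perm\prod_\alpha\sigma_{s_\alpha}=|W|/f_\Delta$ itself: this is the combinatorial computation on the permutohedral toric variety, and the entire argument is arranged precisely so as to reduce to it rather than to attempt the corresponding intersection number directly on the singular variety $\Pet$.
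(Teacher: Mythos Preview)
Your argument is correct and follows essentially the same route as the paper: reduce to ordinary cohomology via the degree-zero observation, use \cref{HessClass} to identify $[\Pet]=[\Perm]$ in $H_*(G/B)$, and then invoke Klyachko's computation $\int_\Perm\prod_{\alpha\in\Delta}\sigma_{s_\alpha}=|W|/f_\Delta$ (which the paper cites as \cite[Thm~3]{klyachko85}). Your explicit applications of the projection formula make the passage through $G/B$ a bit cleaner than the paper's terse version, but there is no substantive difference.
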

\begin{proof}
Observe that since $\deg\left([\Pet]_S\right)=\deg\left(\Omega_\Delta\right)=|\Delta|$,
we have 
$
\left\langle\Omega_\Delta,[\Pet]_S\right\rangle=
\left\langle\prod i^*\sigma_{\alpha},[\Pet]\right\rangle,
$
where the latter expression is the pairing in ordinary (co)homology.  Observe that  $[\Pet]=[\Perm]$ in $H_*(\Fl)$ by \cite{abe.fujita.zeng}. (Alternatively, apply
 the forgetful maps $H^*_T(G/B)\to H^*(G/B)$ and $H^*_S(G/B)\to H^*(G/B)$ to the equations in \cref{HessClass} to draw the same conclusion.)
It follows that
\begin{equation*}
\left\langle\Omega_\Delta,[\Pet]_S\right\rangle=
\left\langle\prod_{\alpha\in\Delta} i^*\sigma_\alpha,[\Pet]\right\rangle=
\left\langle\prod_{\alpha\in\Delta} i^*\sigma_\alpha,[\Perm]\right\rangle=\frac{|W|}{f_\Delta},
\end{equation*}
where the latter equality is from \cite[Thm 3]{klyachko85}.
\end{proof}


\begin{thm}[Multiplicity Formula]
\label{thm:mult}
For $v_I$ a Coxeter element of $I$,
we have
\begin{equation*}
\left\langle p_{v_I},[\Pet_I]_S\right\rangle=\frac{R(v_I)|W_I|}{|I|!\,f_I}.
\end{equation*}
\end{thm}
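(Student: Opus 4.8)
The plan is to reduce at once to the case $I=\Delta$, where the asserted value is $R(v_\Delta)|W|/(|\Delta|!\,f_\Delta)$, and then to bootstrap the general statement from the fact (recorded in the last sentence of \cref{prop:duality}) that the multiplicity $m(v_I)$ is independent of the ambient Dynkin diagram $\Delta\supset I$. Thus there is essentially no new computation to perform: the theorem is a direct assembly of the Giambelli formula \cref{Giambelli}, the integral \cref{lem:mult}, and the superset-independence of \cref{prop:duality}.

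First I would dispose of the case $I=\Delta$. Here $\Pet_\Delta=\Pet$ and $[\Pet_\Delta]_S=[\Pet]_S$, so by \cref{prop:duality} the number $m(v_\Delta)$ is exactly the pairing $\left\langle p_{v_\Delta},[\Pet]_S\right\rangle$. The equivariant Giambelli formula \cref{Giambelli} gives $p_{v_\Delta}=\frac{R(v_\Delta)}{|\Delta|!}\Omega_\Delta$; substituting this and using that the integration pairing is $H^*_S(pt)$-bilinear yields
\[
\left\langle p_{v_\Delta},[\Pet]_S\right\rangle=\frac{R(v_\Delta)}{|\Delta|!}\left\langle\Omega_\Delta,[\Pet]_S\right\rangle .
\]
Now \cref{lem:mult} evaluates the remaining pairing as $|W|/f_\Delta$, so $m(v_\Delta)=R(v_\Delta)|W|/(|\Delta|!\,f_\Delta)$, which is precisely the claimed formula when $I=\Delta$.

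Next I would pass to an arbitrary $I\subset\Delta$. Consider the semisimple group $G_I$ whose Dynkin diagram is $I$, and regard $v_I$ as a Coxeter element for the \emph{full} diagram of $G_I$. Applying the case just established to $G_I$ (with $I$ playing the role of $\Delta$, so that $W$, $f_\Delta$, and $|\Delta|$ become $W_I$, $f_I$, and $|I|$) computes the multiplicity inside $\Pet_I\subset G_I/B_I$ to be $R(v_I)|W_I|/(|I|!\,f_I)$. By the assertion of \cref{prop:duality} that $m(v_I)$ does not depend on the superset, this number equals $m(v_I)=\left\langle p_{v_I},[\Pet_I]_S\right\rangle$ as computed in the original ambient $\Fl=G/B$, which is the desired equality.

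I expect the only delicate point to be the legitimacy of this last reduction, namely that the multiplicity appearing in the ambient pairing $\left\langle p_{v_I},[\Pet_I]_S\right\rangle$ genuinely coincides with the one computed intrinsically in the subsystem $G_I$. This is exactly the superset-independence recorded in \cref{prop:duality} (ultimately resting on the stability results \cref{stab1,stability}), so the argument turns on invoking that statement correctly rather than on any fresh estimate; everything else is the substitution of \cref{Giambelli} and \cref{lem:mult}.
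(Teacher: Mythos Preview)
Your proposal is correct and follows essentially the same approach as the paper's proof: reduce to the case $I=\Delta$ via the superset-independence in \cref{prop:duality}, then substitute the Giambelli formula \cref{Giambelli} and evaluate via \cref{lem:mult}. The paper merely states the reduction first and then performs the substitution in a single line, whereas you handle $I=\Delta$ first and then invoke independence; the logical content is identical.
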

\begin{proof}
Since $m(v_I)$ does not depend on the diagram $\Delta$ containing $I$,
we may assume $I=\Delta$.
%
Using \cref{Giambelli,lem:mult}, we obtain
\begin{equation*}
\left\langle p_{v_I},[\Pet_I]_S\right\rangle
=\frac{R(v_I)}{|I |!}\left\langle\Omega_I ,[\Pet_I]\right\rangle=\frac{R(v_I)|W_I|}{|I |!f_I}.
\end{equation*}
\end{proof}

\section{Dual Peterson Classes and Chevalley and Monk formulae}
\label{sec:chevalley}
In this section,
we present a Chevalley formula for the cap product of a divisor class with a fundamental class $[\Pet_I]_S$,
and a Monk rule with respect to the basis $\set{\Omega_I}{I\subset\Delta}$.
The Monk rule is dual to the Chevalley formula;
the precise relationship between the two follows from \cref{prop:duality,thm:mult}.
We also recover the presentation, obtained by Harada, Horiguchi, and Masuda in \cite{harada.horiguchi.masuda:Peterson},
of $H^*_S(\Pet)$ as a quotient of $H^*_S(\Fl)$,

Recall the Schubert classes $\sigma^S_\alpha$ from \cref{sec:Schubert},
and the line bundles $\mathcal L_\lambda\to\Fl$ from \cref{sec:lineBundles}.
For $\alpha,\beta\in\Delta$, let $a_{\alpha\beta}=\left\langle\beta^\vee,\alpha\right\rangle$ be the $\alpha\beta^{th}$ entry of the Cartan matrix of $\Delta$.
For $\alpha\in\Delta$,
we set $p_\alpha=i^*\sigma_\alpha^S$ and $q_\alpha=\sum_{\beta\in\Delta}a_{\alpha\beta} p_\beta$,
where $i$ denotes the embedding $i:\Pet\hookrightarrow\Fl$.

\subsection{Dual Peterson Classes}
In \cref{qChevalley}, we compute the equivariant cohomology class in $H^*_S(\Pet)$ which is dual to the Peterson subvariety $\Pet_I$.
This is a key step in our proof of the Chevalley formula.

\begin{lemma}
\label{qalpha}
For $\alpha\in\Delta$, we have $c_1^S(i^*\mathcal L_\alpha)=q_\alpha-t$.
\end{lemma}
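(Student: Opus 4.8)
The plan is to compute $c_1^S(\mathcal L_\alpha)$ as a class in $H^*_S(\Fl)$ and then pull it back along $i$, using that $i^*$ commutes with Chern classes and fixes the equivariant scalar $t$. The whole statement thus reduces to the standard expression of a line-bundle Chern class in the basis of Schubert divisor classes, which I would first record in the $T$-equivariant setting: in $H^*_T(\Fl)$ one has
\begin{equation*}
c_1^T(\mathcal L_\lambda)=\sum_{\beta\in\Delta}\langle\beta^\vee,\lambda\rangle\,\sigma^T_{s_\beta}-\lambda.
\end{equation*}
I would justify this either by citing the equivariant divisor (Chevalley) formula via the localization formula of Billey already used in the paper, or, self-containedly, by checking equality after restriction to the $T$-fixed points $wB$. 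The latter is clean: the fibre of $\mathcal L_\lambda=G\times^B\IC_{-\lambda}$ over $wB$ carries the $T$-character $-w\lambda$, so $c_1^T(\mathcal L_\lambda)|_{wB}=-w\lambda$, while $\sigma^T_{s_\beta}|_{wB}$ is supported on $w\ge s_\beta$ with $\sigma^T_{s_\beta}|_{s_\beta}=\beta$. Evaluating at $w=e$ pins down the constant term $-\lambda$, evaluating at $w=s_\beta$ pins down the coefficients $\langle\beta^\vee,\lambda\rangle$ (using $-s_\beta\lambda=-\lambda+\langle\beta^\vee,\lambda\rangle\beta$), and injectivity of localization then gives the identity.

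Next I would apply the restriction map $H^*_T(\Fl)\to H^*_S(\Fl)$ induced by $S\hookrightarrow T$. This sends $\sigma^T_{s_\beta}\mapsto\sigma^S_{s_\beta}$ and acts on the equivariant scalar through $\car^*\to\mathfrak s^*$, $\lambda\mapsto\langle\rho^\vee,\lambda\rangle\,t$, as recorded in \cref{htFn}. Hence
\begin{equation*}
c_1^S(\mathcal L_\lambda)=\sum_{\beta\in\Delta}\langle\beta^\vee,\lambda\rangle\,\sigma^S_{s_\beta}-\langle\rho^\vee,\lambda\rangle\,t .
\end{equation*}

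Specializing $\lambda=\alpha$ for a simple root $\alpha$, I would use $\langle\rho^\vee,\alpha\rangle=1$ (from \cref{htFn}) and $\langle\beta^\vee,\alpha\rangle=a_{\alpha\beta}$, so that $c_1^S(\mathcal L_\alpha)=\sum_{\beta\in\Delta}a_{\alpha\beta}\,\sigma^S_{s_\beta}-t$ in $H^*_S(\Fl)$. Finally I pull back along $i\colon\Pet\to\Fl$: since $i^*c_1^S(\mathcal L_\alpha)=c_1^S(i^*\mathcal L_\alpha)$, since $i^*\sigma^S_{s_\beta}=p_\beta$ by definition, and since $i^*$ fixes the scalar $t\in H^*_S(pt)$, I obtain
\begin{equation*}
c_1^S(i^*\mathcal L_\alpha)=\sum_{\beta\in\Delta}a_{\alpha\beta}\,p_\beta-t=q_\alpha-t,
\end{equation*}
which is the claim.

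The only delicate point is the first step: fixing the sign conventions in the divisor formula, which originate from the convention $\mathcal L_\lambda=G\times^B\IC_{-\lambda}$ and from the normalization $c_1^T(\underline\IC_\lambda)=\lambda$ adopted in \cref{sec:eulerClass}. This is precisely where the fixed-point check (or a carefully matched citation) is needed; everything afterwards is a routine substitution and pullback.
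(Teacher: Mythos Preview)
Your proposal is correct, and the overall architecture matches the paper's: establish the $T$-equivariant identity $c_1^T(\mathcal L_{\varpi_\alpha})=\sigma_\alpha^T-\varpi_\alpha$ (equivalently your general formula for $c_1^T(\mathcal L_\lambda)$), then restrict from $T$ to $S$ and pull back along $i$. The only real difference is in how that $T$-equivariant identity is justified. The paper proceeds geometrically: it builds an explicit section of $\mathcal L_{\varpi_\alpha}$ from a lowest weight vector of $V_{\varpi_\alpha}$, identifies its zero scheme with the Schubert divisor $X^{s_\alpha}$, invokes \cref{cor:twistedSection} to obtain $m\,\sigma_\alpha^T=c_1^T(\mathcal L_{\varpi_\alpha})+\varpi_\alpha$, and then localizes at the single fixed point $s_\alpha$ (via the AJS/Billey formula) to pin down $m=1$. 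You instead verify the formula purely by localization, checking at $e$ and at each simple reflection $s_\beta$ and using that a degree-two class is determined by these restrictions. Your route is shorter and avoids constructing the section; the paper's route is more geometric and ties the result back to the Euler-class machinery of \cref{sec:eulerClass} that is used elsewhere. One small point: your phrase ``injectivity of localization then gives the identity'' is a slight overstatement, since you only check at $e$ and the $s_\beta$; what actually closes the argument is that a class in $H^2_T(G/B)$ vanishing at $e$ lies in the $\IQ$-span of the $\sigma_{s_\beta}^T$, and vanishing at each $s_\beta$ then forces all coefficients to zero.
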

\begin{proof}
Let $\varpi_\alpha$ be the fundamental weight dual to the coroot $\alpha^\vee$,
let $V_{\varpi_\alpha}$ be the corresponding irreducible $G$-representation,
and let $\pr:V_{\varpi_\alpha}\to\IC_{-\varpi_\alpha}$
the $B$-equivariant projection onto the lowest weight space in $V_{\varpi_\alpha}$. 

Let $\mathbf 1\in\IC_{-\varpi_\alpha}$ be a lowest weight vector in $V_{\varpi_\alpha}$,
and consider the section $s$
of the line bundle $\mathcal L_{\varpi_\alpha}\to G/B$
given by  $s(gB)= (g,\pr(g^{-1}\mathbf 1))$.
Observe that the torus $T$ acts on $s$ via the character $-\varpi_\alpha$,
\begin{align*}
(z\cdot s)(gB) & =zs(z^{-1}gB)  = z(z^{-1}g,\pr(g^{-1}z\mathbf 1))\\
               & =z(z^{-1}g,\varpi_\alpha(z^{-1})\pr(g^{-1}\mathbf 1))\\
               & =\varpi_\alpha(z^{-1}) (g,\pr(g^{-1}\mathbf 1)) =\varpi_\alpha(z^{-1})s(gB).
\end{align*}
Further, the zero scheme $Z(s)$ of $s$ is supported precisely on the Schubert divisor $X^{s_\alpha}$,
thus $[Z(s)]_T=m[X^{s_\alpha}]_T$ for some positive integer $m$.
It follows from \cref{cor:twistedSection} that
\begin{align}
\label{work1}
m\sigma^T_\alpha=c_1^T(\mathcal L_{\varpi_\alpha})+\varpi_\alpha.
\end{align}
We evaluate \cref{work1} under localization
$\ell_{s_\alpha}^*:H^*_T(G/B)\to H^*(\{s_\alpha\})$ at the $T$-fixed point $s_\alpha$.
Recall that $\mathcal L_{\varpi_\alpha}=G\times^B\IC_{-\varpi_\alpha}$,
and hence $\ell_{s_\alpha}^*(c_1(\mathcal L_{\varpi_\alpha}))=s_\alpha(-\varpi_\alpha)=-\varpi_\alpha+\alpha$. 
Using the localization formula of Andersen, Jantzen, and Soergel \cite{AJS}, and Billey \cite{billey:kostant},
we have $\ell_{s_\alpha}^*(\sigma_\alpha)=\alpha$.
It follows that $m=1$, i.e.,
\begin{align*}
c_1^T(\mathcal L_{\varpi_\alpha})=\sigma^T_\alpha-\varpi_\alpha.
\end{align*}
Recall that $\alpha=\sum a_{\alpha\beta}\varpi_\beta$,
where $a_{\alpha\beta}=\left\langle\beta^\vee,\alpha\right\rangle$
is the $\alpha\beta^{th}$ entry of the Cartan matrix.
We have
\begin{align*}
c_1^T(\mathcal L_\alpha)
=\sum a_{\alpha\beta} c_1^T(\mathcal L_{\varpi_\beta})
=\sum a_{\alpha\beta}\sigma^T_\beta-\sum a_{\alpha\beta}\varpi_\beta
=\sum a_{\alpha\beta}\sigma^T_\beta-\alpha.
\end{align*}
Consequently, we have
$
c_1^S(\mathcal L_\alpha)
=\sum a_{\alpha\beta}\sigma^S_\beta-t. 
$
Applying the $S$-equivariant pullback $i^*$, we obtain the claimed equality,
$c_1^S(i^*\mathcal L_\alpha)=q_\alpha-t$.
\end{proof}

\begin{prop}
\label{eulerClassGP}
For $I\subset\Delta$,
we have the following equality in $H_*^S(G/B)$:
\begin{align*}
\prod\limits_{\alpha\in\Phi^+\backslash\Phi_I^+}\left(c_1^S(\mathcal L_\alpha)-t\right)\cap[G/B]_S= \frac{|W|}{|W_I|}[X_{w_I}]_S.
\end{align*}
\end{prop}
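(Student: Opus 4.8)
The plan is to identify the left-hand side as the fundamental class of a Hessenberg variety, and then to recognize that this Hessenberg variety is exactly the flag manifold of the Levi corresponding to $I$, embedded as a Schubert variety. First I would choose the Hessenberg space
\begin{align*}
H_I=\bo\oplus\bigoplus_{\alpha\in\phi_I^+}\g_{-\alpha},
\end{align*}
so that $\g_{-\alpha}\not\subset H_I$ precisely when $\alpha\in\phi^+\backslash\phi_I^+$. Applying \cref{HessClass} to $H_I$, the left-hand side of the claimed equality is exactly $[\Hess(e,H_I)]_S$. Thus the proposition reduces to showing
\begin{align*}
[\Hess(e,H_I)]_S=\frac{|W|}{|W_I|}[X_{w_I}]_S
\end{align*}
in $H_*^S(G/B)$, where $w_I$ is the longest element of $W_I$.

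The key geometric input is an identification of $\Hess(e,H_I)$ with a Schubert variety. The Hessenberg space $H_I$ is the parabolic subalgebra $\mathfrak p_I$ (or is closely controlled by the Levi $\mathfrak l_I$): the condition $Ad(g^{-1})e\in H_I$ should cut out the fibration over $G_I/B_I$ collapsing the complementary directions, and one expects $\Hess(e,H_I)\cong G_I/B_I$ embedded in $G/B$ as the Schubert variety $X_{w_I}=\overline{Bw_IB/B}$. Indeed, $\dim X_{w_I}=\ell(w_I)=|\phi_I^+|=\dim\Hess(e,H_I)$, which matches; and both are irreducible $S$-stable subvarieties of the correct dimension, so it suffices to verify they coincide as sets (e.g. by comparing $S$-fixed points, or by restricting the defining condition $Ad(g^{-1})e\in H_I$ to the Levi factor, using that $e$ is the principal nilpotent supported on $\Delta$ but only its $I$-part is relevant modulo $H_I$). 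Once the underlying reduced subschemes are identified, it remains to compute the multiplicity: the fundamental class $[\Hess(e,H_I)]_S$ equals $d\,[X_{w_I}]_S$ for some positive integer $d$, and the claim is that $d=|W|/|W_I|$.

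I would pin down the multiplicity $d$ by passing to ordinary cohomology and integrating against a suitable complementary Schubert class, or more cleanly by a degree computation: since the Euler-class formula of \cref{HessClass} already expresses $[\Hess(e,H_I)]_S$ as an explicit product of divisor classes capped with $[G/B]_S$, I would compute $\langle\sigma,[\Hess(e,H_I)]\rangle$ in ordinary cohomology for $\sigma$ dual to $X_{w_I}$, reducing the equivariant statement to a non-equivariant intersection number. This is precisely the kind of computation settled by Klyachko's formula, already invoked as \cite[Thm~3]{klyachko85} in \cref{lem:mult}, which evaluates $\int_{\Perm}\prod_{\alpha\in\Delta}\sigma_\alpha=|W|/f_\Delta$; the factor $|W|/|W_I|$ here is the analogous index, counting the degree of the collapse. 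The main obstacle I anticipate is the scheme-theoretic identification of $\Hess(e,H_I)$ with $X_{w_I}$ together with the precise determination of the multiplicity: one must check that the section $s_e$ cutting out the Hessenberg variety is regular (so that \cref{HessClass} applies with the right multiplicity) and that the resulting cycle is reduced on the Schubert variety, so that the only discrepancy is the global numerical factor $|W|/|W_I|$ rather than a local one. Everything else is a dimension count and an application of the already-established \cref{HessClass}.
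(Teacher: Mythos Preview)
Your overall strategy---interpreting the left-hand side as the class of a Hessenberg-type scheme supported on $X_{w_I}$ and then determining the multiplicity---is close in spirit to the paper's, but there are two genuine gaps.

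First, \cref{HessClass} as stated requires the Hessenberg space to be indecomposable, i.e.\ to contain $H_0$. Your space $H_I=\mathfrak p_I$ does \emph{not} contain $H_0$ unless $I=\Delta$, so you cannot invoke that theorem directly. You would need to verify independently that the section $s_e$ of $G\times^B(\g/\mathfrak p_I)$ is regular and then apply \cref{cor:twistedSection} by hand; the codimension count does work out, but you have not carried this through.

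Second, and more seriously, your multiplicity argument is not a proof. Klyachko's formula gives $\int_{\Perm}\prod_{\alpha\in\Delta}\sigma_\alpha=|W|/f_\Delta$, which involves the connection index $f_\Delta$, not the index $|W|/|W_I|$ you need; there is no evident ``analogous'' version producing the latter. Pairing with the dual Schubert class, as you suggest, reduces the question to evaluating $\int_{G/B}\sigma_{w_I}\cdot\prod_{\alpha\in\phi^+\setminus\phi_I^+}c_1(\mathcal L_\alpha)$, but you give no mechanism for computing this integral.

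The paper sidesteps both issues by working first on $G/P$ rather than on $G/B$: the section $s(gP)=(g,\pr(Ad(g^{-1})e))$ of the tangent bundle $T(G/P)$ has zero scheme supported at the single point $1.P$, and the multiplicity is read off immediately from Gauss--Bonnet as $\chi(G/P)=|W|/|W_I|$. Pulling back along the flat map $\pi:G/B\to G/P$ then yields the claimed identity, since $\pi^{-1}(1.P)=X_{w_I}$ and $\pi^*T(G/P)=G\times^B(\g/\mathfrak p_I)$ has the required filtration by the line bundles $\mathcal L_\alpha$. This bypasses the indecomposability hypothesis entirely and makes the multiplicity computation essentially trivial.
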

\begin{proof}
Let $P\subset G$ be the parabolic subgroup corresponding to $I$,
and let $\mathfrak p=Lie(P)$.
Recall that the tangent bundle $T(G/P)$ has the following description:
$T(G/P)=G\times^P(\g/\mathfrak p)$.
Let $\pr:\g\to\g/\mathfrak p$ denote the projection,
and consider the section $s:G/P\to T(G/P)$ given by
\begin{align*}
s(gP)=(g,\pr(Ad(g^{-1})e)).
\end{align*}
The zero scheme of $s$ is supported at the single point $1.P$,
and $S$ acts on $s$ via the character $t$.
It follows from \cref{cor:twistedSection} that there exists an integer $N$ such that
\begin{align}
\label{eq:EulerChar}
N[1.P]_S = e^S(T(G/P)\otimes\underline\IC_{-t})\cap [G/P]_S.
\end{align}
Let $\chi(\_)$ denote the Euler characteristic.
Mapping \cref{eq:EulerChar} to ordinary cohomology,
we obtain 
$$
N = e(T(G/P))\cap [G/P]  = \chi(G/P),
$$ 
where the second equality follows from the Poincar\'e-Hopf index theorem, see for example \cite[Ch. 3]{MR0348781}. 
Moreover, since the Euler characteristic of $G/P$ equals the number of Schubert cells in $G/P$,
we have $N = \frac{|W|}{|W_I|}$.
Pulling back \cref{eq:EulerChar} along the ($S$-equivariant) flat map $\pi:G/B\to G/P$,
see \cite{edidin.graham},
we have
\begin{align*}
\frac{|W|}{|W_I|}[X_{w_I}]_S=e^S(\pi^*T(G/P)\otimes\underline\IC_{-t})\cap [G/B]
\end{align*}

Finally, we observe that $\pi^*T(G/P)=G\times^B\mathfrak g/\mathfrak p$
has a filtration with quotients 
$\set{\mathcal L_\alpha}{\alpha\in\Phi^+\setminus \Phi_I^+}$, so that
$$
e^S(\pi^*T(G/P)\otimes \underline{\mathbb C}_{-t}) = \prod_{\alpha\in\Phi^+\backslash\Phi_I^+ } (c^S_1(\mathcal L_\alpha)-t).
$$ The result of the proposition now follows after capping with $[G/B]$.
\end{proof}

Using \cref{eulerClassGP}, we can compute cohomology classes in $H^*_S(\Pet)$ that are dual to the Peterson subvarieties.
\begin{thm}
\label{qChevalley}
For any $I\subset\Delta$, we have 
\begin{align*}
\prod\limits_{\alpha\in\Delta\backslash I}(q_\alpha-2t)\cap[\Pet]_S=\frac{|W|}{|W_I|}[\Pet_I]_S.
\end{align*}
\end{thm}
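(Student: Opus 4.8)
The plan is to verify the identity by pairing both sides against the basis $\set{p_{v_J}}{J\subset\Delta}$ of $H^*_S(\Pet)$ supplied by \cref{prop:duality}, exploiting the nondegeneracy of the integration pairing (\cref{lemma:generate}(b)). Writing the left-hand side in the fundamental-class basis as $\sum_{J}f_J[\Pet_J]_S$ with $f_J\in H^*_S(pt)$, the duality $\left\langle p_{v_J},[\Pet_K]_S\right\rangle=m(v_J)\delta_{JK}$ gives $\left\langle p_{v_J},\sum_K f_K[\Pet_K]_S\right\rangle=f_J\,m(v_J)$. Since the integers $m(v_J)$ are nonzero, it therefore suffices to establish
\[
\left\langle p_{v_J},\ \prod_{\alpha\in\Delta\backslash I}(q_\alpha-2t)\cap[\Pet]_S\right\rangle=\frac{|W|}{|W_I|}\,m(v_I)\,\delta_{IJ}\qquad(J\subset\Delta),
\]
which forces $f_I=\frac{|W|}{|W_I|}$ and $f_J=0$ for $J\neq I$, i.e. the asserted equality.

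First I would transfer this pairing to $G/B$. By \cref{qalpha} we have $q_\alpha-2t=i^*\!\left(c_1^S(\mathcal L_\alpha)-t\right)$, so $\prod_{\alpha\in\Delta\backslash I}(q_\alpha-2t)=i^*\!\left(\prod_{\alpha\in\Delta\backslash I}(c_1^S(\mathcal L_\alpha)-t)\right)$. Using the cap--cup compatibility, the pushforward--pullback adjunction $\left\langle i^*\xi,c\right\rangle_\Pet=\left\langle\xi,i_*c\right\rangle_{G/B}$ (a consequence of the projection formula \cref{projectionFormula}), and $i_*[\Pet]_S=[\Pet]_S$, the pairing becomes $\left\langle \sigma_{v_J}^S,\ \prod_{\alpha\in\Delta\backslash I}(c_1^S(\mathcal L_\alpha)-t)\cap[\Pet]_S\right\rangle_{G/B}$. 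Substituting the Peterson class \cref{PetClass} and merging the two products over the disjoint union $(\Delta\backslash I)\sqcup(\phi^+\backslash\Delta)=\phi^+\backslash I$, this equals
\[
\left\langle \sigma_{v_J}^S,\ \prod_{\gamma\in\phi^+\backslash I}\left(c_1^S(\mathcal L_\gamma)-t\right)\cap[\Fl]_S\right\rangle_{G/B}.
\]

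The crux is to identify the $G/B$ homology class appearing here as $\frac{|W|}{|W_I|}[\Pet_I]_S$. I would obtain this by capping the conclusion of \cref{eulerClassGP} with the leftover factors $\prod_{\gamma\in\phi_I^+\backslash I}(c_1^S(\mathcal L_\gamma)-t)$; because $(\phi^+\backslash\phi_I^+)\sqcup(\phi_I^+\backslash I)=\phi^+\backslash I$, the scalar $\frac{|W|}{|W_I|}$ persists and the claim reduces to the identity $\prod_{\gamma\in\phi_I^+\backslash I}(c_1^S(\mathcal L_\gamma)-t)\cap[X_{w_I}]_S=[\Pet_I]_S$. This last equality is exactly the Peterson-class formula \cref{PetClass} applied to the sub-root system $I$, transported along the isomorphism $X_{w_I}\cong G_I/B_I$ between the Schubert variety of the longest element $w_I\in W_I$ and the flag manifold of the Levi. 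Granting it, the pairing equals $\frac{|W|}{|W_I|}\left\langle\sigma_{v_J}^S,[\Pet_I]_S\right\rangle_{G/B}=\frac{|W|}{|W_I|}\left\langle p_{v_J},[\Pet_I]_S\right\rangle=\frac{|W|}{|W_I|}m(v_I)\delta_{IJ}$ by adjunction and \cref{prop:duality}, which closes the argument.

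The main obstacle will be this final reduction on $X_{w_I}\cong G_I/B_I$. One must check that under the identification the restrictions $\mathcal L_\gamma|_{X_{w_I}}$ correspond to the intrinsic line bundles $\mathcal L_\gamma^I$ for $\gamma\in\phi_I$, that $\Pet_I\subset X_{w_I}$ is genuinely the Peterson variety of $I$, and, most delicately, that the $S$-equivariant structure restricts to the $S_I$-equivariant structure with a common parameter $t$. The subtlety is that $S\neq S_I$ as subtori of $T$: it is only because $h=2\rho^\vee$ and $h_I=2\rho_I^\vee$ restrict to the \emph{same} functional on the roots of $\phi_I$ (the stability phenomenon of \cref{sec:stab}) that $t$ is well defined simultaneously on both sides. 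Once this compatibility is in place, everything else is a formal manipulation of the projection formula and the nondegenerate pairing.
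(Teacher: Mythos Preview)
Your argument is correct and its core coincides with the paper's: both compute in $H_*^S(G/B)$ using \cref{PetClass}, then \cref{eulerClassGP}, then \cref{PetClass} again for the subsystem $I$, and finally transfer back via the projection formula. The paper carries this out directly (after reducing to the case $|\Delta\backslash I|=1$), while you wrap the identical computation in a pairing against the basis $\{p_{v_J}\}$. That wrapper is not wasted: it is precisely what is needed to pass from the equality $i_*(\text{LHS})=\frac{|W|}{|W_I|}\,i_*[\Pet_I]_S$ in $H_*^S(G/B)$ back to the asserted equality in $H_*^S(\Pet)$, a step the paper compresses into the phrase ``the result follows from the projection formula.''

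Your flagged obstacle about $S$ versus $S_I$ is a genuine point, though the resolution is simpler than you suggest. One never needs to compare $S$ with $S_I$: since $[h,e_I]=\sum_{\alpha\in I}\langle h,\alpha\rangle e_\alpha=2e_I$, the section $s_{e_I}$ on $G_I/B_I$ lies in the $t$-eigenspace for the \emph{original} torus $S$ (which acts on $G_I/B_I$ because $S\subset T\subset G_I$). Hence the proof of \cref{HessClass} applies verbatim with $S$ throughout, giving the $S$-equivariant identity $\prod_{\gamma\in\phi_I^+\backslash I}(c_1^S(\mathcal L_\gamma)-t)\cap[G_I/B_I]_S=[\Pet_I]_S$. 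The paper invokes this step simply as ``by \cref{PetClass}'' without further comment.
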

\begin{proof}

It is sufficient to prove the result in the case where $I=\Delta\backslash\{\alpha\}$ for some $\alpha\in\Delta$.
Let $G_I$ be the standard Levi subgroup corresponding to $I\subset\Delta$, and let $B_I=G_I\cap B$.
We identify $G_I/B_I$ with the Schubert variety $X_{w_I}$.
For convenience, we write $r_\alpha=c_1^S(\mathcal L_\alpha)-t$.
We have (in $H^S_*(G/B)$)
\begin{align*}
r_\alpha\cap[\Pet]_S
&=r_\alpha\prod\limits_{\beta\in\Phi^+\backslash\Delta} r_\beta\cap [G/B]_S
&&\text{by \cref{PetClass}}\\
&= \prod\limits_{\beta\in\Phi_I^+\backslash I}r_\beta\prod\limits_{\beta\in\Phi^+\backslash\Phi_I^+} r_\beta\cap [G/B]_S\\
&=\frac{|W|}{|W_I|} \prod\limits_{\beta\in\Phi_I^+\backslash I}r_\beta\cap[G_I/B_I]_S
&&\text{by \cref{eulerClassGP}}\\
&=\frac{|W|}{|W_I|}[\Pet_I]_S
&&\text{by \cref{PetClass}}.
\end{align*}
Following \cref{qalpha}, we have $i^*r_\alpha=q_\alpha-2t$.
The result follows from the projection formula (\cref{projectionFormula}) applied to the inclusion $i:\Pet\to G/B$.
\end{proof}

As an application of \cref{qChevalley},
we recover the presentation of $H^*_S(\Pet)$ as a quotient of $H^*_S(\Fl)$,
first obtained by Harada, Horiguchi, and Masuda \cite{harada.horiguchi.masuda:Peterson}.
\begin{cor}
\label{hhm}
Recall that $q_\alpha=\sum_{\beta\in\Delta}\left\langle\beta^\vee,\alpha\right\rangle p_\beta$.
The equivariant cohomology ring of the Peterson variety admits the presentation 
\begin{align*}
H^*_S(\Pet)=\frac{\mathbb Q[p_\alpha]_{\alpha\in\Delta}}{\left\langle p_\alpha(q_\alpha-2t)\right\rangle}_{\alpha\in\Delta}.
\end{align*}
\end{cor}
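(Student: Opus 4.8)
The plan is to realize $H^*_S(\Pet)$ as a quotient of the polynomial ring $\IQ[p_\alpha]_{\alpha\in\Delta}$ and identify the relations via the multiplicity formula. First I would observe that the classes $\{\Omega_I=\prod_{\alpha\in I}p_\alpha\}_{I\subset\Delta}$ form a $\IQ[t]$-module basis of $H^*_S(\Pet)$. Indeed, by \cref{Giambelli} we have $p_{v_I}=\tfrac{R(v_I)}{|I|!}\Omega_I$, and by \cref{prop:duality} the classes $\{p_{v_I}\}_{I\subset\Delta}$ form a basis; since $R(v_I)/|I|!\neq 0$, the $\Omega_I$ are also a basis. This tells me that $H^*_S(\Pet)$ is a free $\IQ[t]$-module of rank $2^{|\Delta|}$, spanned by squarefree monomials in the $p_\alpha$. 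In particular, every $p_\alpha^2$ must be expressible in this basis, and the relations of the ring are generated by these quadratic reductions together with the commutativity of the $p_\alpha$.

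The heart of the matter is to identify the relation $p_\alpha^2=\text{(something)}$ precisely. Here I would use \cref{qChevalley} in the form $(q_\alpha-2t)\cap[\Pet]_S=\tfrac{|W|}{|W_I|}[\Pet_I]_S$ for $I=\Delta\backslash\{\alpha\}$, but it is cleaner to argue through the pairing. My strategy is to show that $p_\alpha(q_\alpha-2t)=0$ in $H^*_S(\Pet)$ by verifying it pairs to zero against the fundamental-class basis $\{[\Pet_J]_S\}$. Using \cref{qChevalley} and the nondegeneracy of the integration pairing (\cref{lemma:generate}(b)), it suffices to check $\langle p_\alpha(q_\alpha-2t),[\Pet_J]_S\rangle=0$ for all $J$. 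Since $q_\alpha-2t=i^*r_\alpha$ where $r_\alpha=c_1^S(\mathcal L_\alpha)-t$, I can rewrite $p_\alpha(q_\alpha-2t)\cap[\Pet_J]_S$ using the cap-product compatibility and \cref{qChevalley}. The key point is that $(q_\alpha-2t)\cap[\Pet_J]_S$ lands in a multiple of $[\Pet_{J'}]_S$ supported on diagrams not containing $\alpha$, against which the extra factor $p_\alpha=i^*\sigma_\alpha^S$ then pairs to zero by the stability results (\cref{stab1}) — i.e. $p_\alpha$ restricts to zero on $\Pet_{J'}$ when $\alpha\notin J'$.

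With the relations $p_\alpha(q_\alpha-2t)=0$ established in the ring, I would construct the surjection $\IQ[p_\alpha]_{\alpha\in\Delta}\twoheadrightarrow H^*_S(\Pet)$ sending the generator $p_\alpha$ to the divisor class $p_\alpha=i^*\sigma_\alpha^S$. Surjectivity follows because the $\Omega_I$ already span $H^*_S(\Pet)$ as a $\IQ[t]$-module, and they are exactly the images of the squarefree monomials. The relations $\langle p_\alpha(q_\alpha-2t)\rangle$ lie in the kernel by the previous paragraph, so the map factors through the proposed quotient $Q:=\IQ[p_\alpha]/\langle p_\alpha(q_\alpha-2t)\rangle$. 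To finish, I would show this induced map $Q\to H^*_S(\Pet)$ is an isomorphism by a rank count: the relations $p_\alpha(q_\alpha-2t)$ allow one to reduce any monomial to a $\IQ[t]$-combination of squarefree monomials, so $Q$ is spanned over $\IQ[t]$ by the $2^{|\Delta|}$ squarefree monomials; since $H^*_S(\Pet)$ is free of exactly this rank and the images are linearly independent, the map is an isomorphism.

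The main obstacle I anticipate is the rank-count bookkeeping in the final paragraph: one must verify that the single family of quadratic relations $p_\alpha(q_\alpha-2t)$ genuinely suffices to reduce every monomial to squarefree form, which requires checking that repeatedly applying $p_\alpha^2 = p_\alpha(2t) - p_\alpha\sum_{\beta\neq\alpha}a_{\alpha\beta}p_\beta$ terminates and does not introduce new obstructions. Equivalently, one must confirm that $Q$ has $\IQ[t]$-rank \emph{at most} $2^{|\Delta|}$ — the inequality in the other direction being supplied by the basis $\{\Omega_I\}$. This is a standard Gröbner-style argument once the leading terms are set up correctly, but care is needed to ensure the reduction is confluent so that no relations beyond the stated ones are forced.
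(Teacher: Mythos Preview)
Your approach is correct and closely parallels the paper's: the paper also establishes $p_\alpha(q_\alpha-2t)=0$ via \cref{qChevalley} (by showing $p_\alpha(q_\alpha-2t)\cap[\Pet]_S=0$ and invoking that $\omega\mapsto\omega\cap[\Pet]_S$ is an isomorphism, rather than pairing against every $[\Pet_J]_S$, but the content is the same) and then must show the resulting surjection $Q\to H^*_S(\Pet)$ is injective. The genuine difference is in this last step. You propose a rank count, hinging on the claim that squarefree monomials span $Q$ over $\IQ[t]$; the paper instead cites Klyachko's non-equivariant presentation $H^*(\Pet)\cong\IQ[p_\alpha]/\langle p_\alpha q_\alpha\rangle$, deduces that the kernel of $Q\to H^*_S(\Pet)$ lies in $tQ$, and then uses $t$-torsion-freeness of $H^*_S(\Pet)$ together with the grading to force the kernel to be zero. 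Your spanning claim is equivalent (via graded Nakayama) to $\dim_\IQ Q/tQ\le 2^{|\Delta|}$, which \emph{is} the content of Klyachko's result --- so the Gr\"obner-style reduction you correctly flag as the obstacle is not mere bookkeeping but a reproof of the non-equivariant presentation (and the generators $p_\alpha q_\alpha$ are not themselves a Gr\"obner basis, already in type $A_2$). The paper's route is thus more economical; yours would be more self-contained if that step were supplied. (Minor slip: since $a_{\alpha\alpha}=2$, the rewriting reads $p_\alpha^2=tp_\alpha-\tfrac12\sum_{\beta\neq\alpha}a_{\alpha\beta}p_\alpha p_\beta$.)
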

\begin{proof}
 Following \cite{drellich:monk}, the map $H^*_S(G/B)\to H^*_S(\Pet)$ is surjective.
Recall that $H^*_T(G/B)$ is generated (as a ring) by the divisor classes.
Consequently, the $\set{p_\alpha}{\alpha\in\Delta}$ are ring generators for $H^*_S(\Pet)$. 

Let $I=\Delta\backslash\{\alpha\}$. 
Following \cref{qChevalley} and \cite[Thm 6.5, 6.6]{gms:peterson}, we have
\begin{align*}
(q_\alpha-2t)\cap[\Pet]_S & =\frac{|W|}{|W_I|} [\Pet_I]_S,\\
p_\alpha\cap [\Pet_I]_S   & =0,
\end{align*}
and hence
$
p_\alpha(q_\alpha-2t)\cap[\Pet]_S=0.
$
By the universal coefficients theorem \cite[Ch 17, \S 3]{MR1702278}, the map 
$\omega\mapsto\omega\cap[\Pet]_S$ is an isomorphism $H^*_S(\Pet)\xrightarrow\sim H_*^S(\Pet)$.
It follows that $p_\alpha(q_\alpha-2t)=0$.
Consequently, we obtain a surjective map
\begin{align}
\label{surjRelations}
\frac{\mathbb Q[p_\alpha]_{\alpha\in\Delta}}{\left\langle p_\alpha(q_\alpha-2t)\right\rangle}_{\alpha\in\Delta}\longrightarrow H^*_S(\Pet).
\end{align}

It remains to show that this map is an isomorphism, i.e., there are no other relations.   \cref{triangle} specialized to the case of the Peterson and permutohedral varieties, states that $H^*(\Pet)\cong H^*(\Perm)^W$. Using Klyachko's presentation \cite[Thm 3]{klyachko85} of the latter's cohomology, 
we have
\begin{align*}
H^*(\Pet)=\dfrac{\mathbb Q[p_\alpha]_{\alpha\in\Delta}}{\left\langle p_\alpha q_\alpha\right\rangle_{\alpha\in\Delta}}.
\end{align*}
We deduce that the kernel of the map in \eqref{surjRelations} is $t$-divisible.
Since $H^*_S(\Pet)$ is torsion free over $H^*_S(pt)$,
we conclude that \eqref{surjRelations} is an isomorphism.
\end{proof}

\subsection{The Chevalley and Monk formulae}
\begin{thm}[Equivariant Chevalley formula]
\label{thm:chevalley}
For $\alpha\in\Delta$, $J\subset\Delta$, we have
\begin{align*}
p_\alpha\cap[\Pet_J]_S=
\begin{cases}
0 &\text{if }\alpha\not\in J,\\
\left\langle2\rho_J^\vee,\varpi^J_\alpha\right\rangle t\,[\Pet_J]_S+
 \sum\limits_{\beta\in J}
\left\langle\varpi^{J\vee}_\beta,\varpi^J_\alpha\right\rangle
\frac{|W_J|}{|W_{J\backslash \{\beta\}}|}[\Pet_{J\backslash \{\beta\}}]_S
&\text{if }\alpha\in J.
\end{cases}
\end{align*}
\end{thm}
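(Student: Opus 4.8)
The plan is to move the entire computation onto cap products of the form $q_\beta\cap[\Pet]_S$, for which \cref{qChevalley} supplies a closed expression, and to recover the $q_\beta$ from the divisor classes $p_\alpha$ by inverting the Cartan matrix. Concretely, I would first dispose of the case $\alpha\notin J$, then reduce the case $\alpha\in J$ to the situation $J=\Delta$ by a projection-formula argument, and finally settle $J=\Delta$ by the Cartan-matrix inversion.

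Both cases begin from the projection formula (\cref{projectionFormula}) for the closed embedding $\iota_J:\Pet_J\hookrightarrow\Pet$. Regarding $[\Pet_J]_S$ as the push-forward of the fundamental class of $\Pet_J$, it yields
\begin{align*}
p_\alpha\cap[\Pet_J]_S=(\iota_J)_*\bigl(\iota_J^*p_\alpha\cap[\Pet_J]_S\bigr).
\end{align*}
By the stability of divisor classes (\cref{stab1}) we have $\iota_J^*p_\alpha=p_\alpha^J$ for $\alpha\in J$, whereas $\iota_J^*p_\alpha=0$ for $\alpha\notin J$, exactly as in the proof of \cref{Giambelli}. The case $\alpha\notin J$ is thus immediate. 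For $\alpha\in J$, the displayed identity expresses $p_\alpha\cap[\Pet_J]_S$ as the push-forward along $\iota_J$ of the cap product $p_\alpha^J\cap[\Pet_J]_S$ formed inside $H_*^S(\Pet_J)$. Since $\Pet_J$ is itself the Peterson variety of the Dynkin subdiagram $J$, and since $(\iota_J)_*$ sends the fundamental class of each nested subvariety $\Pet_K$, $K\subset J$, to $[\Pet_K]_S$, it suffices to prove the formula when $J=\Delta$; applying that case to the group $G_J$ reproduces precisely the $J$-internal constants $\langle2\rho_J^\vee,\varpi_\alpha^J\rangle$, $\langle\varpi_\beta^{J\vee},\varpi_\alpha^J\rangle$, and $|W_J|/|W_K|$.

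It then remains to treat $J=\Delta$. With $C=(a_{\alpha\beta})$ the Cartan matrix, the relation $q_\alpha=\sum_\beta a_{\alpha\beta}p_\beta$ inverts to $p_\alpha=\sum_\beta(C^{-1})_{\alpha\beta}q_\beta$. Applying \cref{qChevalley} with $I=\Delta\setminus\{\beta\}$ gives
\begin{align*}
q_\beta\cap[\Pet]_S=2t\,[\Pet]_S+\frac{|W|}{|W_{\Delta\setminus\{\beta\}}|}\,[\Pet_{\Delta\setminus\{\beta\}}]_S,
\end{align*}
and summing these against the coefficients $(C^{-1})_{\alpha\beta}$ produces
\begin{align*}
p_\alpha\cap[\Pet]_S=2t\Bigl(\sum_\beta(C^{-1})_{\alpha\beta}\Bigr)[\Pet]_S+\sum_{\substack{\beta\in\Delta\\ K=\Delta\setminus\{\beta\}}}(C^{-1})_{\alpha\beta}\frac{|W|}{|W_K|}[\Pet_K]_S.
\end{align*}
The proof then closes with the two combinatorial identities $(C^{-1})_{\alpha\beta}=\langle\varpi_\beta^\vee,\varpi_\alpha\rangle$ and $\sum_\beta(C^{-1})_{\alpha\beta}=\langle\rho^\vee,\varpi_\alpha\rangle$. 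The first follows by expanding $\varpi_\alpha=\sum_\gamma(C^{-1})_{\alpha\gamma}\gamma$ and pairing against $\varpi_\beta^\vee$; the second follows from the first together with $\rho^\vee=\sum_\beta\varpi_\beta^\vee$.

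I expect the one delicate point to be the reduction to $J=\Delta$ rather than the computation itself. One must keep careful track that, after the push-forward, every structure constant is read off relative to the subdiagram $J$ and not to $\Delta$ --- this is exactly the failure of stability of fundamental weights and coweights recorded in \cref{sec:stab}, and it is what forces the appearance of $\varpi_\alpha^J$, $\varpi_\beta^{J\vee}$, and $\rho_J^\vee$ in the statement. By contrast, the stability of roots and coroots ensures that the Cartan entries $a_{\alpha\beta}$, hence the classes $q_\alpha$, are unchanged on passing to $J$, which is what legitimizes applying \cref{qChevalley} inside $\Pet_J$.
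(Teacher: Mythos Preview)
Your proposal is correct and follows essentially the same route as the paper's proof: both dispose of $\alpha\notin J$ via $\iota_J^*p_\alpha=0$, then invert the Cartan matrix of $J$ to write $p_\alpha$ as a $\langle\varpi_\beta^{J\vee},\varpi_\alpha^J\rangle$-combination of the $q_\beta$, and apply \cref{qChevalley} together with the projection formula. The only difference is organizational --- you separate out an explicit reduction to $J=\Delta$, whereas the paper performs the Cartan-matrix inversion directly over the subdiagram $J$ --- but the substance is identical.
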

\begin{proof}
Recall the inclusion map $\iota_J:\Pet_J\hookrightarrow\Pet$.
Following \cite[Thm. 6.6(b)]{gms:peterson}, we have 
\begin{equation*}\iota_J^*p_\alpha=
\begin{cases}
p_\alpha & \text{if }\alpha\in J,\\
0        & \text{otherwise}.
\end{cases}
\end{equation*}
In particular, we have $p_\alpha\cap[\Pet_J]_S=0$ for $\alpha\not\in J$.

Next, for $\alpha\in J$, we have 
$
\varpi^J_\alpha = \sum_{\beta\in J} \left\langle \varpi^{J\vee}_\beta,\varpi^J_\alpha\right\rangle \beta,
$
and hence
\begin{align*}
&&\iota_J^*p_\alpha =\sum_{\beta\in J} \left\langle \varpi^{J\vee}_\beta,\varpi^J_\alpha\right\rangle \iota_J^* q_\beta 
&&\text{in }H^*_S(\Pet_J).
\end{align*}
Following \cref{qChevalley}, we have
\begin{align*}
&&q_\beta\cap[\Pet_J]_S
=\frac{|W_J|}{|W_{J\backslash\{\beta\}}|}[\Pet_{J\backslash\{\beta\}}]_S+2t[\Pet_J]_S.
&&
\end{align*}
Further, for $\beta\in J$, we have $\iota_J^*q_\beta=\sum_{\alpha\in J}a_{\beta\alpha}\iota_J^*p_\alpha=\sum_{\alpha\in J}a_{\beta\alpha}p_\alpha=q_\beta$.
Hence, by the projection formula (\cref{projectionFormula}),
\begin{align*}
p_\alpha\cap[\Pet_J]_S&=\sum_{\beta\in J}\langle \varpi^{J\vee}_\beta,\varpi^J_\alpha\rangle\, q_\beta \cap [\Pet_J]_S \\
&=
\sum_{\beta\in J}
\left\langle\varpi^{J\vee}_\beta,\varpi^J_\alpha\right\rangle
\left(
2t [\Pet_J]_S+
\frac{|W_J|}{|W_{J\backslash\{\beta\}}|}[\Pet_{J\backslash\{\beta\}}]_S\right)
\\
&=
\left\langle2\rho_J^\vee,\varpi^J_\alpha\right\rangle t
[\Pet_J]_S+
\sum_{\beta\in J}
\left\langle\varpi^{J\vee}_\beta,\varpi^J_\alpha\right\rangle
\frac{|W_J|}{|W_{J\backslash\{\beta\}}|}[\Pet_{J\backslash\{\beta\}}]_S.
\end{align*}
\end{proof}

\begin{example}
\label{ex1}
Let $\Delta=B_2$.
\begin{center}
\begin{tikzpicture}
\draw[fill=blue] (0,0) circle (.1cm);
\draw[fill=blue] (1,0) circle (.1cm);
\draw (.1,.03)--+(0.8,0);
\draw (.1,-.03)--+(0.8,0);
\draw (0.6,0)--+(-0.12,0.12);
\draw (0.6,0)--+(-0.12,-0.12);
\draw (0,-.2) node {$\scriptscriptstyle 1$};
\draw (1,-.2) node {$\scriptscriptstyle 2$};
\end{tikzpicture}
\end{center}
We use \cref{thm:chevalley} to compute 
\begin{align*}
p_1\cap[\Pet]_S=
\left\langle2\rho^\vee,\varpi_1\right\rangle t\,[\Pet]_S+
\left\langle\varpi^{\vee}_1 ,\varpi_1\right\rangle
\frac{|W|}{|W_{\{2\}}|}[\Pet_{\{2\}}]_S+
\left\langle\varpi^{\vee}_2,\varpi_1\right\rangle
\frac{|W|}{|W_{\{1\}}|}[\Pet_{\{1\}}]_S.
\end{align*}
Recall the realization of the root system of $B_2$ in $\mathbb R^2$,
given by $\alpha_1=\epsilon_1-\epsilon_2$, $\alpha_2=\epsilon_2$.
We have $\varpi_1=\epsilon_1=\alpha_1+\alpha_2$.
Observe that $\left\langle \varpi_j^\vee,\varpi_i\right\rangle$ equals the coefficient of $\alpha_j$ in the expansion of $\varpi_i$ as a sum of simple roots.
Hence, we have
\begin{align*}
\left\langle\varpi_1^\vee,\varpi_1\right\rangle=1,&&
\left\langle\varpi_2^\vee,\varpi_1\right\rangle=1.
\end{align*}
Furthermore, $2\rho^\vee=2\varpi_1^\vee+2\varpi_2^\vee$,
and hence $\left\langle 2\rho^\vee,\varpi_1\right\rangle=4$.
Finally, we have $|W|=8$ and $|W_{\{1\}}|=|W_{\{2\}}|=2$.
Therefore,
\begin{align*}
p_1\cap[\Pet]_S=4t[\Pet]_S+4[\Pet_{\{1\}}]_S+4[\Pet_{\{2\}}]_S.
\end{align*}
\qed
\end{example}

Recall that $\Omega_I=\prod_{\alpha\in I}p_\alpha$,
so that, in particular, $\Omega_\alpha=p_\alpha$.
Following \cref{prop:duality,Giambelli}, the set $\set{\Omega_I}{I\subset\Delta}$ is a basis of $H^*_S(\Pet)$ over $\IQ[t]$. 
Using \cref{prop:duality,thm:mult},
we can deduce a Monk rule for this basis from the Chevalley formula.
\begin{thm}[Equivariant Monk Rule]
\label{thm:monk}
Let $f_J$ denote the \emph{connection index} of the Dynkin diagram $J$,
i.e., the determinant of the Cartan matrix of $J$.
For $\alpha\in\Delta$, we have
\begin{align*}
\Omega_\alpha \Omega_I=
\begin{cases}
\Omega_{I\cup\{\alpha\}}
&\text{if }\alpha\not\in I,\\
2\left\langle\rho^\vee_I,\varpi^I_\alpha\right\rangle t \Omega_I+
\sum\limits_{\gamma\in\Delta\backslash I}
\dfrac{f_{I\cup\{\gamma\}}}{f_I}\left\langle\varpi_\gamma^{(I\cup\{\gamma\})\vee},\varpi^{I\cup\{\gamma\}}_\alpha\right\rangle \Omega_{I\cup\{\gamma\}}
&\text{if }\alpha\in I.
\end{cases}
\end{align*}
\end{thm}
\begin{proof}
Consider 
the coefficients $c_{\alpha I}^J\in\IQ[t]$ in the product  $\Omega_\alpha \Omega_I=\sum c_{\alpha I}^J \Omega_J$.
Following \cref{prop:duality} we have
\begin{equation*}
c_{\alpha I}^J=
\frac{\left\langle \Omega_\alpha \Omega_I,[\Pet_J]_S\right\rangle}
{\left\langle \Omega_J,[\Pet_J]_S\right\rangle}
=\frac{\left\langle \Omega_I,\Omega_\alpha\cap [\Pet_J]_S\right\rangle}
{\left\langle \Omega_J,[\Pet_J]_S\right\rangle}
=\frac{f_J}{|W_J|}\left\langle \Omega_I,\Omega_\alpha\cap [\Pet_J]_S\right\rangle,
\end{equation*}
where the final equality is from \cref{lem:mult}.

Consider first $\alpha\not\in J$.
Following \cref{thm:chevalley}, we have $\Omega_\alpha\cap[\Pet_J]_S=0$,
and hence $c_{\alpha I}^J=0$.

Consider now $\alpha\in J$.
Recall from \cref{prop:duality} that $\left\langle\Omega_I,[\Pet_K]_S\right\rangle=0$ unless $I=K$.
Further, by \cref{thm:chevalley}, the only $[\Pet_I]_S$ appearing in the expansion of $\Omega_\alpha\cap[\Pet_J]_S$
correspond to  $I=J$ or $I=J\backslash\{\gamma\}$ for some $\gamma\in J$.
Thus $c_{\alpha I}^J=0$ unless $I=J$ or $I=J\backslash\{\gamma\}$ for some $\gamma\in J$. 
For $J=I$, we have,
\begin{align*}
c_{\alpha I}^I
&=\frac{ f_I}{|W_I|}\left\langle\Omega_I, \Omega_\alpha\cap [\Pet_I]_S\right\rangle\\
&=\frac{f_I}{|W_I|}\left\langle \Omega_I, \left\langle2\rho_I^\vee,\varpi^I_\alpha\right\rangle t\,[\Pet_I]_S
+ \sum\limits_{\beta\in I} \left\langle\varpi^{I\vee}_\beta,\varpi^I_\alpha\right\rangle\frac{|W_I|}{|W_{I\backslash\{\beta\}}|}[\Pet_{I\backslash\{\beta\}}]_S \right\rangle\\
&=\frac{f_I}{|W_I|}\left\langle2\rho_I^\vee,\varpi^I_\alpha\right\rangle t\,\left\langle \Omega_I,[\Pet_I]_S\right\rangle\\
&= \left\langle2\rho_I^\vee,\varpi^I_\alpha\right\rangle t.
\end{align*}
In the case where $J=I\sqcup\{\gamma\}$ for some $\gamma\in\Delta$,
we have
\begin{align*}
c_{\alpha I}^J&=
\frac{ f_J}{|W_J|}\left\langle \Omega_I,\Omega_\alpha\cap [\Pet_J]\right\rangle\\
&=\frac{f_J}{|W_J|}\left\langle \Omega_I, \left\langle2\rho_J^\vee,\varpi^J_\alpha\right\rangle t\,[\Pet_J]_S
+ \sum\limits_{\beta\in J} \left\langle\varpi^{J\vee}_\beta,\varpi^J_\alpha\right\rangle\frac{|W_J|}{|W_{J\backslash\{\beta\}}|}[\Pet_{J\backslash\{\beta\}}]_S\right\rangle
\\
&=\frac{f_J}{|W_J|}\frac{|W_J|}{|W_I|}
\left\langle\varpi^{J\vee}_\gamma,\varpi^J_\alpha\right\rangle
\left\langle \Omega_I,
[\Pet_I]_S
\right\rangle \\
&= \left\langle\varpi^{J\vee}_\gamma,\varpi^J_\alpha\right\rangle
 \frac{f_J}{f_I}.
\end{align*}
\end{proof}
\begin{example}
Consider $\Delta=B_3$, and let $I=\{1,2\}\subset\Delta$ .
\begin{center}
\begin{tikzpicture}
\def\xl{-1.45}
\def\yl{0.15}
\def\xw{.7}
\def\n{3}

\foreach \x in {1,...,\n} 
{
 \draw (\xw*\x-\xw+\xl , \yl ) circle (.1cm);
 \draw (\xw*\x-1*\xw+\xl , \yl-.5*\xw ) node {$\scriptscriptstyle{\pgfmathparse{int(\x)}\pgfmathresult}$};
}
\foreach \x in {3,...,\n} \draw[xshift=3] (\xw*\x-3*\xw+\xl , \yl) --+(\xw-.2,0);

\draw[xshift=3,yshift=1] (\xw*\n-2*\xw+\xl , \yl) --+(\xw-.2,0);
\draw[xshift=3,yshift=-1] (\xw*\n-2*\xw+\xl , \yl) --+(\xw-.2,0);
\draw (\xw*\n-1.4*\xw+\xl , \yl) --+(-.2*\xw,.2*\xw);
\draw (\xw*\n-1.4*\xw+\xl , \yl) --+(-.2*\xw,-.2*\xw);
\draw[fill=blue!80] (\xl , \yl ) circle (.1cm);
\draw[fill=blue!80] (\xl+\xw , \yl ) circle (.1cm);
\end{tikzpicture}
\end{center}
We compute the product $\Omega_{2}\Omega_I$.
By \cref{thm:monk},
\begin{align*}
\Omega_{2}\Omega_I&=
2\left\langle\rho^\vee_I,\varpi_{2}^I\right\rangle t \Omega_I+
\sum\limits_{\gamma\in\Delta\backslash I}
\dfrac{f_{I\cup\{\gamma\}}}{f_I}\left\langle\varpi_\gamma^{(I\cup\{\gamma\})\vee},\varpi^{I\cup\{\gamma\}}_{2}\right\rangle \Omega_{I\cup\{\gamma\}}\\
&=
2\left\langle\rho^\vee_I,\varpi_{2}^I\right\rangle t \Omega_I+
\dfrac{f_{\Delta}}{f_I} \left\langle\varpi_{3}^{\vee},\varpi^{}_{2}\right\rangle \Omega_\Delta.
\end{align*}
Since the subdiagram $I$ is isomorphic to $A_2$,
the term $\left\langle\rho_I^\vee,\varpi_2^I\right\rangle$ is calculated in $A_2$.
We have $\rho_I^\vee=\frac12(\alpha_1^\vee+\alpha_2^\vee+(\alpha_1^\vee+\alpha_2^\vee))=\alpha_1^\vee+\alpha_2^\vee$,
and hence $\langle\rho_I^\vee,\varpi_2^I\rangle=1$.

The term $\left\langle\varpi_3^\vee,\varpi_2\right\rangle$ is the coefficient of $\alpha_3$ in the expansion of $\varpi_2$ as a sum of simple roots.
Recall the usual realization of the $B_3$ root system inside a $3$-dimensional vector space with orthonormal basis $\left\{\epsilon_1,\epsilon_2,\epsilon_3\right\}$,
given by $\alpha_1=\epsilon_1-\epsilon_2$, $\alpha_2=\epsilon_2-\epsilon_3$, and $\alpha_3=\epsilon_3$.
The fundamental weight $\varpi_2$ is given by
\begin{align*}
\varpi_2=\epsilon_1+\epsilon_2=\alpha_1+2\alpha_2+2\alpha_3,
\end{align*}
and hence $\left\langle\varpi_3^\vee,\varpi_2\right\rangle=2$.
The connection indices are
\begin{align*}
f_I=\det\begin{pmatrix}2&-1\\-1&2\end{pmatrix}=3,&&
f_\Delta=\det\begin{pmatrix}2&-1&0\\-1&2&-2\\0&-1&2\\\end{pmatrix}=2.
\end{align*}
Hence, we have $\Omega_2\Omega_I=2t\Omega_I+\frac43\Omega_\Delta$.
\end{example}

\begin{rem}
\label{rem:monk}
Fix a Coxeter element $v_I$ for each $I\subset\Delta$,
and set $p_{v_I}=i^*\sigma_{v_I}^S$.
It is common in the literature to work with the basis $\set{p_{v_I}}{I\subset\Delta}$.
We see from \cref{Giambelli} that $\set{p_{v_I}}{I\subset\Delta}$ and $\set{\Omega_I}{I\subset\Delta}$
are related by a diagonal change of basis matrix.
This allows us to translate \cref{thm:monk} into a Monk rule for the basis $\set{p_{v_I}}{I\subset\Delta}$,
\begin{align*}
p_\alpha p_{v_I}=
\begin{cases}
\dfrac{(|I|+1)R(v_I)}{R(v_{I\cup\{\alpha\}})}p_{v_{I\cup\{\alpha\}}}
&\text{if }\alpha\not\in I,\\
2 
\left\langle\rho^\vee_I,\varpi_\alpha\right\rangle t p_{v_I}+
\sum\limits_{\gamma\in\Delta\backslash I}
\dfrac{(|I|+1)f_{I\cup\{\gamma\}} R(v_I)}{f_I R(v_{I\cup\{\gamma\}})}
\left\langle\varpi_\gamma^{I\cup\{\gamma\}\vee},\varpi^{I\cup\{\gamma\}}_\alpha\right\rangle p_{v_{I\cup\{\gamma\}}}
&\text{if }\alpha\in I.
\end{cases}
\end{align*}
\end{rem}

\subsection{Tables of Structure Constants}
\label{sec:examples}
Observe that $\left\langle\varpi_\gamma^{J\vee},\varpi_\alpha^J\right\rangle$ is precisely the coefficient of $\gamma$ in the expression of the fundamental weight $\varpi_\alpha^J$ as a sum of the simple roots in $J$.
These coefficients, and the connection indices of the Dynkin diagrams, are listed in \cite[Tables 2, 3]{onishchik.vinberg},
see also \cite[Ch.6, \S4]{bourbaki:Lie46}.
Using this, we can compute the structure constants in the Chevalley and Monk formulae.
For the reader's convenience, we tabulate the equivariant structure constants for the Monk and Chevalley formulae in types A--D.
The ordinary structure constants in the Monk rule for all Dynkin diagrams have also been recently computed and tabulated by Horiguchi in \cite[Table 2]{horiguchi2021mixed}.
\footnote{Our structure constants $c_{i,K}^J$ correspond to the numbers $m_{i,K}^J$ in \cite{horiguchi2021mixed}.}


We will denote by $c_{iJ}^K$ and $d_{iJ}^K$ the structure constants given by
\begin{align*}
\Omega_{\alpha_i}\Omega_I=\sum c_{iI}^J\Omega_J,&&
p_{\alpha_i}\cap[\Pet_J]_S=\sum d_{iJ}^K[\Pet_K]_S
\end{align*}
respectively. We write $i$ for $\alpha_i\in I$ to simplify notation.
Following \cref{thm:chevalley,thm:monk}, for $i\in I$, we have
\begin{align*}
&&c_{iI}^I & =d_{iI}^I =\left\langle2\rho_I^\vee,\varpi^I_i\right\rangle t,         & \\
&&c_{iI}^J & = \dfrac{f_J}{f_I}\left\langle\varpi_j^{J\vee},\varpi^J_i\right\rangle & J=I\sqcup\{j\},
\end{align*}
and for $i\in J$, we have
\begin{align*}
&&d_{iJ}^K=\left\langle\varpi^{J\vee}_j,\varpi^J_i\right\rangle \frac{|W_J|}{|W_K|},&&  K=J\backslash\{j\}.
\end{align*}
For the classical type Dynkin diagrams, these values are listed in \cref{MonkTable}.
The ordinary structure constants for the Chevalley formula are listed in \cref{TableChevalley}.
The structure constants not involving $t$, in the Monk rule, are listed (for types A--D) in \cref{ordCM}.

\begin{table}[ht]
\begin{tabular}{| c | c |}
\hline
\begin{tikzpicture}\draw(0,0)node{$I$};\end{tikzpicture} &\begin{tikzpicture}\draw(0,0)node{$c_{iI}^I=d_{iI}^I
$};\end{tikzpicture}\\
\hline
{
\def\k{4}
\def\d{0}
\begin{tikzpicture}
\def\xl{0}
\def\yl{.3}
\def\xw{.7}

\draw (\xw+\xl , \yl+0.4 ) node{};
\foreach \x in {1,...,\k} 
{
 \draw (\xw*\x-\xw+\xl , \yl ) circle (.1cm);
 \draw[xshift=3] (\xw*\x-\xw+\xl , \yl) --+(\xw-.2,0);
}
 \draw (\xl , \yl-.5*\xw ) node {$\scriptscriptstyle{1}$};
 \draw (\xl+\xw , \yl-.5*\xw ) node {$\scriptscriptstyle{2}$};
 \draw (\xl+4*\xw , \yl-.5*\xw ) node {$\scriptscriptstyle{n}$};

\foreach \x in {2,...,\d} 
{
 \draw (\xw*\x-2*\xw+\k*\xw+\xl , \yl ) circle (.1cm);
 \draw[xshift=3] (\xw*\x+\k*\xw-3*\xw+\xl , \yl) --+(\xw-.2,0);
}
\end{tikzpicture}
} &
\begin{tikzpicture}\draw(0,0)node{$ i(n+1-i)t$};\draw(0,-0.33) node{};\end{tikzpicture}\\
\hline
\begin{tikzpicture}
\def\xl{-4.2}
\def\yl{1.3}
\def\xw{.7}
\def\n{5}

\draw(-2,0.7) node{};
\draw (\xw+\xl , \yl+0.4 ) node{};
\foreach \x in {2,...,\n}
{
 \draw (\xw*\x-2*\xw+\xl , \yl ) circle (.1cm);
}
\draw (\xl , \yl-.5*\xw ) node {$\scriptscriptstyle 1$};
\draw (\xw+\xl , \yl-.5*\xw ) node {$\scriptscriptstyle 2$};
\draw (4*\xw+\xl , \yl-.5*\xw ) node {$\scriptscriptstyle{n}$};
\foreach \x in {3,...,\n} \draw[xshift=3] (\xw*\x-3*\xw+\xl , \yl) --+(\xw-.2,0);

\draw[xshift=3,yshift=1] (\xw*\n-2*\xw+\xl , \yl) --+(\xw-.2,0);
\draw[xshift=3,yshift=-1] (\xw*\n-2*\xw+\xl , \yl) --+(\xw-.2,0);
\draw (\xw*\n-1.35*\xw+\xl , \yl) --+(-.2*\xw,.2*\xw);
\draw (\xw*\n-1.35*\xw+\xl , \yl) --+(-.2*\xw,-.2*\xw);
\draw (\xl+\n*\xw-\xw , \yl ) circle (.1cm);
\end{tikzpicture} &
\begin{tikzpicture}\draw(0,0)node{$ \begin{cases}\dfrac {n(n+1)}2t& \text{if }i=n\\ i(2n+1-i)t & \text{if }i\neq n\end{cases}$};\end{tikzpicture}\\
\hline
\begin{tikzpicture}
\def\xl{-4.2}
\def\yl{1.3}
\def\xw{.7}
\def\n{5}

\draw (\xw+\xl , \yl+0.4 ) node{};
\foreach \x in {2,...,\n}
{
\draw (\xw*\x-2*\xw+\xl , \yl ) circle (.1cm);
}
\draw (\xl , \yl-.5*\xw ) node {$\scriptscriptstyle 1$};
\draw (\xw+\xl , \yl-.5*\xw ) node {$\scriptscriptstyle 2$};
\draw (4*\xw+\xl , \yl-.5*\xw ) node {$\scriptscriptstyle{n}$};

\foreach \x in {3,...,\n} \draw[xshift=3] (\xw*\x-3*\xw+\xl , \yl) --+(\xw-.2,0);

\draw[xshift=3,yshift=1] (\xw*\n-2*\xw+\xl , \yl) --+(\xw-.2,0);
\draw[xshift=3,yshift=-1] (\xw*\n-2*\xw+\xl , \yl) --+(\xw-.2,0);
\draw (\xw*\n-1.55*\xw+\xl , \yl) --+(.2*\xw,.2*\xw);
\draw (\xw*\n-1.55*\xw+\xl , \yl) --+(.2*\xw,-.2*\xw);
\draw (\xl+\n*\xw-\xw , \yl ) circle (.1cm);
\end{tikzpicture} &
\begin{tikzpicture}\draw(0,0)node{$i(2n-i)t$};\draw(0,-0.3)node{};\end{tikzpicture}\\
\hline
{\def\y{1.4}
\begin{tikzpicture}
\def\n{6}
\def\xl{-4.3}
\def\yl{0.6}
\def\xw{.7}

\draw (-3,\y) node {};

\foreach \x in {4,...,\n} 
{
\draw (\xw*\x-3*\xw+\xl , \yl ) circle (.1cm);
\draw[xshift=3] (\xw*\x-4*\xw+\xl , \yl) --+(\xw-.2,0);
}
 \draw (\xw*4-3*\xw+\xl , \yl-.5*\xw ) node {$\scriptscriptstyle{\pgfmathparse{int(4-2)}\pgfmathresult}$};
\draw (\xl , \yl ) circle (.1cm);
\draw (\xl , \yl-.5*\xw ) node {$\scriptscriptstyle1$};
\draw (\xl+\n*\xw-2*\xw , \yl+\xw ) node {$\scriptscriptstyle{n-1}$};
\draw (\xl+\n*\xw-2*\xw , \yl-\xw ) node {$\scriptscriptstyle{n}$};
\draw[yshift=2,xshift=2] (\xl+\n*\xw-3*\xw , \yl) --+(\xw*.53,\xw*.53);
\draw[yshift=-2,xshift=2] (\xl+\n*\xw-3*\xw , \yl) --+(\xw*.53,-\xw*.53);
\draw(\xl+\n*\xw-2.27*\xw , \yl+.73*\xw) circle (.1cm);
\draw (\xl+\n*\xw-2.27*\xw , \yl-.73*\xw) circle (.1cm);
\end{tikzpicture}} &
\begin{tikzpicture}\draw(0,0)node{$\begin{cases}\dfrac{n(n-1)}2t & \text{if }i\in\{n-1,n\}\\i(2n-1-i)t& \text{otherwise.}\end{cases}$};\draw(0,-0.7) node{};\end{tikzpicture}\\
\hline 
\end{tabular}
\caption{Structure constant for the leading term in the Monk and Chevalley formulae.} 
\label{MonkTable}
\end{table}

\begin{table}
\begin{tabular}{| c | c |}
\hline
\begin{tikzpicture}\draw(0,0)node{$I$};\end{tikzpicture} &\begin{tikzpicture}\draw(0,0)node{$d_{iI}^{I\backslash\{j\}}$};\end{tikzpicture}\\
\hline
{
\def\k{4}
\def\d{0}
\begin{tikzpicture}
\def\xl{0}
\def\yl{.3}
\def\xw{.7}

\draw (\xw+\xl , \yl+0.4 ) node{};
\foreach \x in {1,...,\k} 
{
 \draw (\xw*\x-\xw+\xl , \yl ) circle (.1cm);
 \draw[xshift=3] (\xw*\x-\xw+\xl , \yl) --+(\xw-.2,0);
}
 \draw (\xl , \yl-.5*\xw ) node {$\scriptscriptstyle{1}$};
 \draw (\xl+\xw , \yl-.5*\xw ) node {$\scriptscriptstyle{2}$};
 \draw (\xl+4*\xw , \yl-.5*\xw ) node {$\scriptscriptstyle{n}$};

\foreach \x in {2,...,\d} 
{
 \draw (\xw*\x-2*\xw+\k*\xw+\xl , \yl ) circle (.1cm);
 \draw[xshift=3] (\xw*\x+\k*\xw-3*\xw+\xl , \yl) --+(\xw-.2,0);
}
\end{tikzpicture}
} &
\begin{tikzpicture}
\draw(0,0)node{$\begin{cases}\binom{n}{j-1}{(n-i+1)}&\text{if }j\leq i,\\\binom{n}ji&\text{if }j\geq i.\end{cases}$};
\end{tikzpicture}\\
\hline
\begin{tikzpicture}
\def\xl{-4.2}
\def\yl{1.3}
\def\xw{.7}
\def\n{5}

\draw(-2,0.7) node{};
\draw (\xw+\xl , \yl+0.4 ) node{};
\foreach \x in {2,...,\n}
{
 \draw (\xw*\x-2*\xw+\xl , \yl ) circle (.1cm);
}
\draw (\xl , \yl-.5*\xw ) node {$\scriptscriptstyle 1$};
\draw (\xw+\xl , \yl-.5*\xw ) node {$\scriptscriptstyle 2$};
\draw (4*\xw+\xl , \yl-.5*\xw ) node {$\scriptscriptstyle{n}$};
\foreach \x in {3,...,\n} \draw[xshift=3] (\xw*\x-3*\xw+\xl , \yl) --+(\xw-.2,0);

\draw[xshift=3,yshift=1] (\xw*\n-2*\xw+\xl , \yl) --+(\xw-.2,0);
\draw[xshift=3,yshift=-1] (\xw*\n-2*\xw+\xl , \yl) --+(\xw-.2,0);
\draw (\xw*\n-1.35*\xw+\xl , \yl) --+(-.2*\xw,.2*\xw);
\draw (\xw*\n-1.35*\xw+\xl , \yl) --+(-.2*\xw,-.2*\xw);
\draw (\xl+\n*\xw-\xw , \yl ) circle (.1cm);
\end{tikzpicture} &
\begin{tikzpicture}
\draw(0,0)node{$\begin{cases}2^{j-1}j\binom nj&\text{if }i=n,\\2^j\binom nj\min(i,j)&\text{otherwise.}\end{cases}$};
\end{tikzpicture}\\
\hline
\begin{tikzpicture}
\def\xl{-4.2}
\def\yl{1.3}
\def\xw{.7}
\def\n{5}

\draw (\xw+\xl , \yl+0.4 ) node{};
\foreach \x in {2,...,\n}
{
\draw (\xw*\x-2*\xw+\xl , \yl ) circle (.1cm);
}
\draw (\xl , \yl-.5*\xw ) node {$\scriptscriptstyle 1$};
\draw (\xw+\xl , \yl-.5*\xw ) node {$\scriptscriptstyle 2$};
\draw (4*\xw+\xl , \yl-.5*\xw ) node {$\scriptscriptstyle{n}$};

\foreach \x in {3,...,\n} \draw[xshift=3] (\xw*\x-3*\xw+\xl , \yl) --+(\xw-.2,0);

\draw[xshift=3,yshift=1] (\xw*\n-2*\xw+\xl , \yl) --+(\xw-.2,0);
\draw[xshift=3,yshift=-1] (\xw*\n-2*\xw+\xl , \yl) --+(\xw-.2,0);
\draw (\xw*\n-1.55*\xw+\xl , \yl) --+(.2*\xw,.2*\xw);
\draw (\xw*\n-1.55*\xw+\xl , \yl) --+(.2*\xw,-.2*\xw);
\draw (\xl+\n*\xw-\xw , \yl ) circle (.1cm);
\end{tikzpicture} &
\begin{tikzpicture}
\draw(0,0)node{$\begin{cases}2^{n-1}i&\text{if }j=n,\\2^j\binom nj\min(i,j)&\text{otherwise.}\end{cases}$};
\end{tikzpicture}\\
\hline
{\def\y{-0.6}
\begin{tikzpicture}
\def\n{6}
\def\xl{-4.3}
\def\yl{0.6}
\def\xw{.7}

\draw (-3,\y) node {};

\foreach \x in {4,...,\n} 
{
\draw (\xw*\x-3*\xw+\xl , \yl ) circle (.1cm);
\draw[xshift=3] (\xw*\x-4*\xw+\xl , \yl) --+(\xw-.2,0);
}
 \draw (\xw*4-3*\xw+\xl , \yl-.5*\xw ) node {$\scriptscriptstyle{\pgfmathparse{int(4-2)}\pgfmathresult}$};
\draw (\xl , \yl ) circle (.1cm);
\draw (\xl , \yl-.5*\xw ) node {$\scriptscriptstyle1$};
\draw (\xl+\n*\xw-2*\xw , \yl+\xw ) node {$\scriptscriptstyle{n-1}$};
\draw (\xl+\n*\xw-2*\xw , \yl-\xw ) node {$\scriptscriptstyle{n}$};
\draw[yshift=2,xshift=2] (\xl+\n*\xw-3*\xw , \yl) --+(\xw*.53,\xw*.53);
\draw[yshift=-2,xshift=2] (\xl+\n*\xw-3*\xw , \yl) --+(\xw*.53,-\xw*.53);
\draw(\xl+\n*\xw-2.27*\xw , \yl+.73*\xw) circle (.1cm);
\draw (\xl+\n*\xw-2.27*\xw , \yl-.73*\xw) circle (.1cm);
\end{tikzpicture}} & 
\begin{tikzpicture}\draw(0,0)node{$
\begin{cases}
2^{n-3}n&\text{if }i=j\geq n-1,\\
2^{n-3}(n-2)&\text{if }\{i,j\}=\{n-1,n\},\\ 
2^{n-2}i&\text{if }i\leq n-2\text{ and }j\geq n-1,\\
2^{j-1}j\binom nj&\text{if }i\geq n-1\text{ and }j\leq n-2\\ 
2^j\binom nj\min(i,j)&\text{if }i,j\leq n-2.
\end{cases}
$};
\end{tikzpicture}\\
\hline 
\end{tabular}
\caption{Structure constants for the Chevalley formula.}
\label{TableChevalley}
\end{table}
\begin{table}[ht]
\centering
\begin{tabular}{| c | c |}
\hline
Dynkin Pair $\begin{tikzcd}({\color{blue}J},K)\end{tikzcd}$& $c_{i J}^K$ \\ \hline
{
\def\k{4}
\def\d{1}
\begin{tikzpicture}
\def\xl{0}
\def\yl{.3}
\def\xw{.7}

\draw (\xw+\xl , \yl+0.4 ) node{};
\foreach \x in {1,...,\k} 
{
 \draw[fill=blue!80] (\xw*\x-\xw+\xl , \yl ) circle (.1cm);
 \draw[xshift=3] (\xw*\x-\xw+\xl , \yl) --+(\xw-.2,0);
}
 \draw (\xl , \yl-.5*\xw ) node {$\scriptscriptstyle{1}$};
 \draw (\xl+4*\xw , \yl-.5*\xw ) node {$\scriptscriptstyle{n}$};

\foreach \x in {2,...,\d} 
{
 \draw (\xw*\x-2*\xw+\k*\xw+\xl , \yl ) circle (.1cm);
 \draw[xshift=3] (\xw*\x+\k*\xw-3*\xw+\xl , \yl) --+(\xw-.2,0);
}
\end{tikzpicture}
} & \begin{tikzpicture}\draw(0,0)node {$\dfrac in$};\end{tikzpicture} \\\hline
\begin{tikzpicture}
\def\xl{-4.2}
\def\yl{1.3}
\def\xw{.7}
\def\n{5}

\draw (\xw+\xl , \yl+0.4 ) node{};
\foreach \x in {2,...,\n}
{
 \draw[fill=blue!80] (\xw*\x-2*\xw+\xl , \yl ) circle (.1cm);
}
\draw (\xl , \yl-.5*\xw ) node {$\scriptscriptstyle 1$};
\draw (\xw+\xl , \yl-.5*\xw ) node {$\scriptscriptstyle 2$};
\draw (4*\xw+\xl , \yl-.5*\xw ) node {$\scriptscriptstyle{n}$};
\foreach \x in {3,...,\n} \draw[xshift=3] (\xw*\x-3*\xw+\xl , \yl) --+(\xw-.2,0);

\draw[xshift=3,yshift=1] (\xw*\n-2*\xw+\xl , \yl) --+(\xw-.2,0);
\draw[xshift=3,yshift=-1] (\xw*\n-2*\xw+\xl , \yl) --+(\xw-.2,0);
\draw (\xw*\n-1.35*\xw+\xl , \yl) --+(-.2*\xw,.2*\xw);
\draw (\xw*\n-1.35*\xw+\xl , \yl) --+(-.2*\xw,-.2*\xw);
\draw (\xl+\n*\xw-\xw , \yl ) circle (.1cm);
\end{tikzpicture} & \begin{tikzpicture}\draw(0,0) node {$\dfrac{2i}n$};\end{tikzpicture} \\ \hline
\begin{tikzpicture}
\def\xl{-4.2}
\def\yl{1.3}
\def\xw{.7}
\def\n{5}

\foreach \x in {3,...,\n}
{
 \draw[fill=blue!80] (\xw*\x-2*\xw+\xl , \yl ) circle (.1cm);
}
\draw (\xl , \yl-.5*\xw ) node {$\scriptscriptstyle 1$};
\draw (\xw+\xl , \yl-.5*\xw ) node {$\scriptscriptstyle 2$};
\draw (4*\xw+\xl , \yl-.5*\xw ) node {$\scriptscriptstyle{n}$};

\draw (\xl , \yl ) circle (.1cm);
\foreach \x in {3,...,\n} \draw[xshift=3] (\xw*\x-3*\xw+\xl , \yl) --+(\xw-.2,0);

\draw[xshift=3,yshift=1] (\xw*\n-2*\xw+\xl , \yl) --+(\xw-.2,0);
\draw[xshift=3,yshift=-1] (\xw*\n-2*\xw+\xl , \yl) --+(\xw-.2,0);
\draw (\xw*\n-1.35*\xw+\xl , \yl) --+(-.2*\xw,.2*\xw);
\draw (\xw*\n-1.35*\xw+\xl , \yl) --+(-.2*\xw,-.2*\xw);
\draw[fill=blue!80] (\xl+\n*\xw-\xw , \yl ) circle (.1cm);
\end{tikzpicture} & \begin{tikzpicture}\draw(0,0) node{$\begin{cases}1&\text{if }i\neq n\\\frac12&\text{otherwise.}\end{cases}$};\end{tikzpicture} \\ \hline
\begin{tikzpicture}
\def\xl{-4.2}
\def\yl{1.3}
\def\xw{.7}
\def\n{5}

\draw (\xw+\xl , \yl+0.4 ) node{};
\foreach \x in {2,...,\n}
{
\draw[fill=blue!80] (\xw*\x-2*\xw+\xl , \yl ) circle (.1cm);
}
\draw (\xl , \yl-.5*\xw ) node {$\scriptscriptstyle 1$};
\draw (\xw+\xl , \yl-.5*\xw ) node {$\scriptscriptstyle 2$};
\draw (4*\xw+\xl , \yl-.5*\xw ) node {$\scriptscriptstyle{n}$};

\foreach \x in {3,...,\n} \draw[xshift=3] (\xw*\x-3*\xw+\xl , \yl) --+(\xw-.2,0);

\draw[xshift=3,yshift=1] (\xw*\n-2*\xw+\xl , \yl) --+(\xw-.2,0);
\draw[xshift=3,yshift=-1] (\xw*\n-2*\xw+\xl , \yl) --+(\xw-.2,0);
\draw (\xw*\n-1.55*\xw+\xl , \yl) --+(.2*\xw,.2*\xw);
\draw (\xw*\n-1.55*\xw+\xl , \yl) --+(.2*\xw,-.2*\xw);
\draw (\xl+\n*\xw-\xw , \yl ) circle (.1cm);
\end{tikzpicture} & \begin{tikzpicture}\draw(0,0) node{$\dfrac in$};\end{tikzpicture} \\ \hline
\begin{tikzpicture}
\def\xl{-4.2}
\def\yl{1.3}
\def\xw{.7}
\def\n{5}

\draw (\xw+\xl , \yl+0.4 ) node{};
\foreach \x in {3,...,\n}
{
 \draw[fill=blue!80] (\xw*\x-2*\xw+\xl , \yl ) circle (.1cm);
}
\draw (\xl , \yl-.5*\xw ) node {$\scriptscriptstyle 1$};
\draw (\xw+\xl , \yl-.5*\xw ) node {$\scriptscriptstyle 2$};
\draw (4*\xw+\xl , \yl-.5*\xw ) node {$\scriptscriptstyle{n}$};

\draw (\xl , \yl ) circle (.1cm);
\foreach \x in {3,...,\n} \draw[xshift=3] (\xw*\x-3*\xw+\xl , \yl) --+(\xw-.2,0);

\draw[xshift=3,yshift=1] (\xw*\n-2*\xw+\xl , \yl) --+(\xw-.2,0);
\draw[xshift=3,yshift=-1] (\xw*\n-2*\xw+\xl , \yl) --+(\xw-.2,0);
\draw (\xw*\n-1.55*\xw+\xl , \yl) --+(.2*\xw,.2*\xw);
\draw (\xw*\n-1.55*\xw+\xl , \yl) --+(.2*\xw,-.2*\xw);
\draw[fill=blue!80] (\xl+\n*\xw-\xw , \yl ) circle (.1cm);
\end{tikzpicture} & \begin{tikzpicture}\draw(0,0) node{$1$};\draw(0,-0.3)node{};\end{tikzpicture} \\ \hline
\begin{tikzpicture}
\def\n{6}
\def\xl{-4.3}
\def\yl{0.6}
\def\xw{.7}

\foreach \x in {4,...,\n} 
{
\draw[fill=blue!80] (\xw*\x-3*\xw+\xl , \yl ) circle (.1cm);
\draw[xshift=3] (\xw*\x-4*\xw+\xl , \yl) --+(\xw-.2,0);
}
 \draw (\xw*4-3*\xw+\xl , \yl-.5*\xw ) node {$\scriptscriptstyle{\pgfmathparse{int(4-2)}\pgfmathresult}$};
\draw[fill=blue!80] (\xl , \yl ) circle (.1cm);
\draw (\xl , \yl-.5*\xw ) node {$\scriptscriptstyle1$};
\draw (\xl+\n*\xw-2*\xw , \yl+\xw ) node {$\scriptscriptstyle{n-1}$};
\draw (\xl+\n*\xw-2*\xw , \yl-\xw ) node {$\scriptscriptstyle{n}$};
\draw[yshift=2,xshift=2] (\xl+\n*\xw-3*\xw , \yl) --+(\xw*.53,\xw*.53);
\draw[yshift=-2,xshift=2] (\xl+\n*\xw-3*\xw , \yl) --+(\xw*.53,-\xw*.53);
\draw[fill=blue!80](\xl+\n*\xw-2.27*\xw , \yl+.73*\xw) circle (.1cm);
\draw (\xl+\n*\xw-2.27*\xw , \yl-.73*\xw) circle (.1cm);
\end{tikzpicture} & \begin{tikzpicture}\draw(0,0) node{$\begin{cases}\dfrac{2i}n&\text{if }i\neq n-1,\\\dfrac{n-2}n&\text{otherwise.}\end{cases}$};\end{tikzpicture} \\ \hline
\begin{tikzpicture}
\def\n{6}
\def\xl{-4.3}
\def\yl{0.6}
\def\xw{.7}

\foreach \x in {4,...,\n} 
{
\draw[fill=blue!80] (\xw*\x-3*\xw+\xl , \yl ) circle (.1cm);
\draw[xshift=3] (\xw*\x-4*\xw+\xl , \yl) --+(\xw-.2,0);
}
 \draw (\xw*4-3*\xw+\xl , \yl-.5*\xw ) node {$\scriptscriptstyle{\pgfmathparse{int(4-2)}\pgfmathresult}$};
\draw (\xl , \yl ) circle (.1cm);
\draw (\xl , \yl-.5*\xw ) node {$\scriptscriptstyle1$};
\draw (\xl+\n*\xw-2*\xw , \yl+\xw ) node {$\scriptscriptstyle{n-1}$};
\draw (\xl+\n*\xw-2*\xw , \yl-\xw ) node {$\scriptscriptstyle{n}$};
\draw[yshift=2,xshift=2] (\xl+\n*\xw-3*\xw , \yl) --+(\xw*.53,\xw*.53);
\draw[yshift=-2,xshift=2] (\xl+\n*\xw-3*\xw , \yl) --+(\xw*.53,-\xw*.53);
\draw[fill=blue!80](\xl+\n*\xw-2.27*\xw , \yl+.73*\xw) circle (.1cm);
\draw[fill=blue!80] (\xl+\n*\xw-2.27*\xw , \yl-.73*\xw) circle (.1cm);
\end{tikzpicture} & \begin{tikzpicture}\draw(0,0) node{$\begin{cases}\frac12&\text{if }i\in\{n-1,n\},\\ 1&\text{otherwise.}\end{cases}$};\draw(0,-0.7)node{};\end{tikzpicture} \\ \hline
\end{tabular}
\caption{Ordinary terms in the Monk rule, see \cref{thm:monk}.}
\label{ordCM}
\end{table}

\begin{example}
\label{ex2}
Consider $\Delta=B_3$, and $I=\{2,3\}$.
We compute the product $p_2\cap[\Pet_I]_S$ using \cref{MonkTable,TableChevalley}.
\begin{center}
\begin{tikzpicture}
\def\xl{-1.45}
\def\yl{0.15}
\def\xw{.7}
\def\n{3}

\foreach \x in {2,...,\n} 
{
 \draw[fill=blue] (\xw*\x-\xw+\xl , \yl ) circle (.1cm);
 \draw (\xw*\x-1*\xw+\xl , \yl-.5*\xw ) node {$\scriptscriptstyle{\pgfmathparse{int(\x)}\pgfmathresult}$};
}
\draw (\xl , \yl-.5*\xw ) node {$\scriptscriptstyle{\pgfmathparse{int(1)}\pgfmathresult}$};
\foreach \x in {3,...,\n} \draw[xshift=3] (\xw*\x-3*\xw+\xl , \yl) --+(\xw-.2,0);
\draw[xshift=3,yshift=1] (\xw*\n-2*\xw+\xl , \yl) --+(\xw-.2,0);
\draw[xshift=3,yshift=-1] (\xw*\n-2*\xw+\xl , \yl) --+(\xw-.2,0);
\draw (\xw*\n-1.4*\xw+\xl , \yl) --+(-.2*\xw,.2*\xw);
\draw (\xw*\n-1.4*\xw+\xl , \yl) --+(-.2*\xw,-.2*\xw);
\draw (\xl , \yl ) circle (.1cm);
\end{tikzpicture}
\end{center}

We have
$
p_2\cap[\Pet_I]_S=d_{2I}^I[\Pet_I]
+d_{2I}^{\{2\}}[\Pet_{\{2\}}]_S+d_{2I}^{\{3\}}[\Pet_{\{3\}}]_S.
$
The subdiagram $I$ is isomorphic to $B_2$,
so the coefficient $d_{2I}^I=4t$ corresponds to $i=1$, $n=2$ in the second row of \cref{MonkTable}.
The coefficient $d_{2I}^{\{2\}}=4$ corresponds to $i=1$, $j=2$, and $n=2$ in the second row of \cref{TableChevalley}.
The coefficient $d_{2I}^{\{3\}}=4$ corresponds to $i=1$, $j=1$, and $n=2$ in the second row of \cref{TableChevalley}.
Hence, we have 
\begin{align*}
p_2\cap[\Pet_I]_S=4t[\Pet_I]_S+4[\Pet_{\{2\}}]_S+4[\Pet_{\{3\}}]_S.
\end{align*}
\qed
\end{example}
Recall from \cref{thm:chevalley} that the coefficients in the Chevalley formula do not depend on the superset $\Delta$ containing $I$.
This phenomenon can be observed by comparing \cref{ex1} with \cref{ex2}.

\begin{example}
Consider $\Delta=D_6$, and let $I=\{3,4,5\}\subset\Delta$ .
\begin{center}
\begin{tikzpicture}
\def\xl{-4.3}
\def\yl{0.6}
\def\xw{.7}
\def\n{6}

\draw (\xw*4-3*\xw+\xl , \yl ) circle (.1cm);
\draw[xshift=3] (\xw*4-4*\xw+\xl , \yl) --+(\xw-.2,0);
\draw (\xw*4-3*\xw+\xl , \yl-.5*\xw ) node {$\scriptscriptstyle 2$};

\foreach \x in {5,...,\n} 
{
 \draw[fill=blue!80] (\xw*\x-3*\xw+\xl , \yl ) circle (.1cm);
 \draw[xshift=3] (\xw*\x-4*\xw+\xl , \yl) --+(\xw-.2,0);
 \draw (\xw*\x-3*\xw+\xl , \yl-.5*\xw ) node {$\scriptscriptstyle{\pgfmathparse{int(\x-2)}\pgfmathresult}$};
}
\draw (\xl , \yl ) circle (.1cm);
\draw (\xl , \yl-.5*\xw ) node {$\scriptscriptstyle1$};
\draw (\xl+\n*\xw-2*\xw , \yl+\xw ) node {$\scriptscriptstyle{\pgfmathparse{int(\n-1)}\pgfmathresult}$};
\draw (\xl+\n*\xw-2*\xw , \yl-\xw ) node {$\scriptscriptstyle{\pgfmathparse{int(\n)}\pgfmathresult}$};
\draw[yshift=2,xshift=2] (\xl+\n*\xw-3*\xw , \yl) --+(\xw*.53,\xw*.53);
\draw[yshift=-2,xshift=2] (\xl+\n*\xw-3*\xw , \yl) --+(\xw*.53,-\xw*.53);
\draw (\xl+\n*\xw-2.27*\xw , \yl-.73*\xw) circle (.1cm);
\draw[fill=blue!80] (\xl+\n*\xw-2.27*\xw , \yl+.73*\xw) circle (.1cm);
\end{tikzpicture}
\end{center}
We compute the product $\Omega_{3}\Omega_I$ using \cref{MonkTable,ordCM}.
Observe first that for $\gamma\in\Delta\backslash I$, $J=I\cup\{\gamma\}$,
we have $\left\langle\varpi_\gamma^{J\vee},\varpi_\alpha^J\right\rangle=0$ if $\gamma$ is not connected to $I$.
We deduce that $c_{\alpha_3I}^{I\cup\{1\}}=0$, and that
\begin{align*}
\Omega_{3}\Omega_I
&=c_{3I}^I\Omega_I+c_{3I}^{I\cup\{2\}}\Omega_{I\cup\{2\}}+c_{3I}^{I\cup\{6\}}\Omega_{I\cup\{6\}}.
\end{align*}
The coefficient $c_{3I}^I=3t$ corresponds to $i=1$, $n=3$ in the first row of \cref{MonkTable},
The coefficient $c_{3I}^{I\cup\{6\}}=\frac12$ corresponds to 
$i=1$, $n=4$ in the sixth row of \cref{ordCM},
and the coefficient 
$
c_{3I}^{I\cup\{2\}}=\frac34$ corresponds to $i=3$ and $n=4$ in the first row of \cref{ordCM}.
Hence, 
\begin{align*}
\Omega_{3}\Omega_I=3t\Omega_I+\frac34\Omega_{I\cup\{2\}}+\frac12\Omega_{I\cup\{6\}}.
\end{align*}
\end{example}

\bibliographystyle{abbrv}
\bibliography{biblio}

\end{document}